\newtheorem{prop}{Proposition}
\newtheorem{theorem}{Theorem}
\newtheorem{lemma}{Lemma}
\newtheorem{example}{Example}
\definecolor{lightgreen}{rgb}{0.439,0.678,0.278}
\definecolor{lightblue}{rgb}{0.267,0.447,0.769}
\def\X{{\mathcal X}}
\def\C{{\mathcal C}}
\def\E{{\mathcal E}}
\def\G{{\mathcal G}}
\def\ie{{i.e.,} }
\def\prob{{\mathbb P}}
\def\MP{\text{MP}}
\def\LP{\text{LP}}
\def\st{{\rm s.t.}}
\def\NP{{\mathcal NP}}
\def\proj{{\rm proj}}
\def\conv{{\rm conv}}
\def\D{{\mathcal D}}
\def\P{{\mathcal P}}
\def\H{{\mathcal H}}
\title{Inference in higher-order undirected graphical models and binary polynomial optimization
\thanks{Both authors were partially funded by AFOSR grant FA9550-23-1-0123.}}
\author{Aida Khajavirad, Yakun Wang}
\date{}
\author{Aida Khajavirad
\thanks{Department of Industrial and Systems Engineering,
             Lehigh University.
             E-mail: {\tt aida@lehigh.edu}.
             }
\and
Yakun Wang
\thanks{Department of Industrial and Systems Engineering,
             Lehigh University.
             E-mail: {\tt yaw220@lehigh.edu}.
             }
}
\begin{document}

\maketitle

\begin{abstract}
We consider the problem of inference in higher-order undirected graphical models with binary labels. We formulate this problem as a binary polynomial optimization problem and propose several linear programming relaxations for it. We compare the strength of the proposed linear programming relaxations theoretically. Finally, we demonstrate the effectiveness of these relaxations by performing a computational study for two important applications, namely, image restoration and decoding error-correcting codes.
\end{abstract}

\emph{Key words:} Graphical models; MAP  estimator; Higher-order interactions; Binary polynomial optimization; Multilinear polytope; Linear programming relaxations.

\section{Introduction}\label{introduction}
Graphical models are powerful probabilistic modeling tools
for capturing complex relationships among large collections of random variables and have found ample applications in  computer vision, natural language processing, signal processing, bioinformatics, and statistics~\cite{WaiJor08}.
In this framework, dependencies among random variables are represented by a graph. If this graph is a directed acyclic graph, the graphical model is often referred to as a \emph{Bayesian Network}, while if the graph is undirected, it is often referred to as an \emph{Undirected Graphical Model} (UGM) or a~\emph{Markov Random Field}.
In this paper, we focus on UGMs.

\paragraph{Undirected graphical models.} Let $G=(V,E)$ be an undirected graph, where $V, E$ denote node set and edge set of $G$, respectively. In order to define a graphical model, we associate with each node
 $v  \in V$ a random variable $X_v$ taking values in some state space $\X_v$. The edge set $E$ represents dependencies between random variables; that is, for any three distinct nodes $u,v,w \in V$, $X_u$ is independent of $X_v$ given $X_w$, if every path from $u$ to $v$ in $G$ passes through $w$. The notation $\prob(X_v = x_v)$ corresponds to the probability of the event that the random variable $X_v$ takes the value $x_v \in \X_v$.
 Denote by $\C$ the set of maximal cliques in $G$;  \ie the set of cliques that are not properly contained in any other clique of $G$. Recall that a clique $C$ is a subset of $V$ such that $\{u,v\} \in E$ for all $u \neq v \in C$. For each $C \in \C$, let us define a nonnegative \emph{potential function} $\phi_C(x_C)$, where $x_C$ is the vector consisting of $x_v$, $v \in C$.  In this paper, we assume $\X_v = \{0,1\}$ for all $v \in V$, henceforth referred to as a binary UGM.
 It can be shown that the joint probability mass function for a binary UGM is given by:
 \begin{equation}\label{pmf}
 p(x_v, v \in V) = \prob(X_v=x_v, v\in V)= \frac{1}{Z} \prod_{C \in \C}{\phi_C(x_C)},
 \end{equation}
where $Z$ is a normalization constant given by $Z:= \sum_{x}{\prod_{C\in \C}\phi_C(x_C)}$. The \emph{order} of a UGM is defined as the size of a largest clique $C \in \C$ minus one.
Due to their simplicity, first-order UGMs, also known as pair-wise models, are the most popular UGMs. However, to model more complex interactions among random variables, it is essential to study higher-order UGMs.

\paragraph{Inference in binary UGMs.}
Given some noisy observation $Y_v$, $v \in V$, we would like to recover the ground truth $X_v$, $v \in V$, whose probability mass function is described by a binary UGM defined by~\eqref{pmf}. By definition,
the maximum a posteriori (MAP) estimator maximizes the probability of recovering the ground truth.
%That is, if the MAP estimator fails in recovering the ground truth, no algorithm, efficient or not, will succeed
%in doing so.
In the following, we denote by $\prob[X|Y]$ the probability that $X_v$, $v \in V$ is the ground truth, given that $Y_v$, $v \in V$ is observed. Hence, we are interested in solving the following optimization problem:
\begin{equation}\label{map1}
{\rm max}_{x \in \{0,1\}^V} \; \prob[X|Y].
\end{equation}
Notice that $\prob[Y] > 0$, and does not depend on $x$. Hence, by Bayes' theorem and monotonicity of the log function, we deduce that
$${\rm argmax}(\prob[X|Y])= {\rm argmax}(\prob[Y|X] \prob[X])= {\rm argmax}(\log(\prob[Y|X] \prob[X])).$$
Using~\eqref{pmf}, it follows that to solve Problem~\ref{map1} we can equivalently solve:
\begin{equation}\label{map2}
    {\rm max}_{x \in \{0,1\}^V} \log(\prob[Y|X])+\sum_{C \in \C}{\log(\phi_C(x_C))}.
\end{equation}
As we mentioned before, in this paper we consider binary UGMs. Suppose that $\phi_C(x_C) > 0$ for all $C \in \C$.
It is well-known that any real-valued function in binary variables can be written as a binary polynomial function in the same variables. Given a clique $C$, denote by $P(C)$ the power set of $C$. Then Problem~\ref{map2} can be equivalently written as:
\begin{equation}\label{map3}
    {\rm max}_{x \in \{0,1\}^V} \log(\prob[Y|X])+\sum_{C \in \C}{\sum_{e \in P(C)} c_e \prod_{v \in e} x_v}.
\end{equation}
We should remark that if $\phi_C(x_C) = 0$ for some $x_C \in \X_C' \subset \{0,1\}^C$, then one can proceed by adding the constraint $x_C \not\in \X'_C$ to Problem~\ref{map2}. We use this technique in Section~\ref{sec:decoding} to formulate the decoding problem.
Now let us consider the first term in the objective function of Problem~\ref{map3}. To obtain an explicit description for $\prob[Y|X]$, we have to make assumptions on the noise. In the following we introduce a simple noise model that we will also use in our numerical experiments.
Given $p \in (0, \frac{1}{2}]$, the noisy observation $Y$ is constructed as follows: for each $v \in V$, $y_v$ is corrupted with probability $p$, \ie $y_v = 1-x_v$, and $y_v$ is not corrupted with probability $1-p$, \ie $y_v = x_v$. We refer to this noise model as the \emph{bit-flipping} noise. We then have
\begin{align*}
\prob\left[y_v \mid x_v\right]=
\begin{cases}
p^{x_v} (1-p)^{1-x_v} & \text{if $y_v=0$} \\
p^{1-x_v} (1-p)^{x_v} & \text{if $y_v=1$}.
\end{cases}
\end{align*}
Since the probability of corruption of the entries of $Y$ are independent, we have
$$
\prob[Y \mid X]
=\prod_{v \in V} \prob\left[y_v \mid x_v\right]
 =\prod_{v \in V: y_v = 0} {p^{x_v} (1-p)^{1-x_v}} \prod_{v \in V: y_v = 1} {p^{1-x_v} (1-p)^{x_v}},
$$
%Substituting in the objective function of Problem~\eqref{map3},
%and using the fact that $p \leq \frac{1}{2}$,
which in turn implies that
\begin{align*}
\log(\prob[Y \mid X])
%& = \log p (\sum_{v\in V : y_v = 0}{x_v}+\sum_{v\in V : y_v = 1}{(1-x_v)})+\log(1-p) (\sum_{v\in V : y_v = 0}{(1-x_v)}+\sum_{v\in V : y_v = 1}{x_v})\\
& = \log\Big(\frac{1-p}{p}\Big)\Big(\sum_{v\in V: y_v=1}{x_v}-\sum_{v\in V: y_v=0}{x_v}\Big)+\Big|\{v\in V: y_v = 1\}\Big|\log p\\
&+ \Big|\{v \in V: y_v=0\}\Big|\log(1-p) .
\end{align*}
We then deduce that under the bit-filliping noise, to solve Problem~\ref{map3}, it suffices to solve the following \emph{unconstrained binary polynomial optimization problem}:
\begin{equation}\label{map4}
    {\rm max}_{x \in \{0,1\}^V} \log\Big(\frac{1-p}{p}\Big)\Big(\sum_{v\in V: y_v=1}{x_v}-\sum_{v\in V: y_v=0}{x_v}\Big)+\sum_{C \in \C}{\sum_{e \in P(C)} c_e \prod_{v \in e} x_v}.
\end{equation}
Note that $\log\big(\frac{1-p}{p}\big) > 0$, since by assumption $p \in (0,\frac{1}{2}]$. An optimal solution of Problem~\ref{map4} is a MAP estimator under the bit-flipping noise and it requires parameter $p$ as an input. We would like to employ a formulation that does not have the knowledge of how the noisy observation was generated. That is, we propose to solve the following optimization problem:
\begin{equation}\label{map5}
    {\rm max}_{x \in \{0,1\}^V} \alpha\Big(\sum_{v\in V: y_v=1}{x_v}-\sum_{v\in V: y_v=0}{x_v}\Big)+\sum_{C \in \C}{\sum_{e \in P(C)} c_e \prod_{v \in e} x_v},
\end{equation}
where $\alpha$ is a positive parameter that along with the remaining parameters $c_e$, $e \in P(C)$, $C \in \C$ are \emph{learned} from the data.

\paragraph{Literature review.} The literature on inference in UGMs is mostly focused on first-order UGMs; \ie the case where $|C|=2$ for all $C \in \C$. For first-order binary UGMs, Problem~\ref{map5} simplifies to an unconstrained binary quadratic optimization problem, which is NP-hard in general. The most popular methods to tackle this inference problem are belief propagation~\cite{wainwright2005map,felzenszwalb2006efficient}, which is a message passing algorithm, and graph cut algorithms~\cite{kolmogorov2004energy,boykov2006graph,boykov2001fast,kolmogorov2007minimizing}. Moreover, constant-factor approximation algorithms are available for this problem class~\cite{GoeWil95,nest98}.
Utilizing higher-order UGMs is essential for capturing more complex interactions among random variables. Yet, their study has been fairly limited due to the complexity of solving Problem~\ref{map5} in its full generality. In fact, almost all existing studies
considering higher-order UGMs tackle the inference problem by first reducing it to a binary quadratic optimization problem through the introduction of auxiliary variables and subsequently employing graph cut algorithms to solve the quadratic optimization problem~\cite{fix2011graph,rother2009minimizing,ishikawa2009higher,ishikawa2010transformation}. In~\cite{feldman2005using}, the authors consider the inference problem for a higher-order binary UGM arising from the error-correcting decoding problem and propose a linear programming (LP) relaxation for this problem.
In~\cite{feldman2006lp}, the authors analyze the performance of the LP relaxation of~\cite{feldman2005using} theoretically, hence establishing the effectiveness of the LP relaxation for decoding low-density-parity-check codes.
In~\cite{crama2017class} the authors consider a third-order binary UGM for a simplified image restoration problem and propose an LP relaxation for this problem.

\paragraph{Our contributions.} In spite of its ample applications, the existing results for inference in higher-order binary UGMs are rather scarce. In this paper, by building upon recent theoretical and algorithmic developments for binary polynomial optimization~\cite{dPKha17MOR,dPKha18MPA,dPKha18SIOPT,dPKha21MOR,dPKhaSah20MPC,dPKha23mMPA}, we present strong LP relaxations for Problem~\ref{map5} in its full generality. We prove that the proposed LPs are stronger than the existing LPs for this class of problems and can be solved efficiently using off-the-shelf LP solvers. We consider two important applications of inference in higher-order binary UGMs; namely image restoration, a popular application in computer vision, and decoding error-correcting codes, a central problem in information theory. Via an extensive computational study, we show that a simple LP relaxation that we refer to as the ``clique LP'' is often sharp for image restoration problems. The decoding problem on the other hand turns out to be a difficult problem and while the proposed clique LP outperforms the only existing LP relaxation for this problem~\cite{feldman2006lp}, the improvement is rather small.

\paragraph{Organization.} The remainder of this paper is structured as follows. In Section~\ref{sec:lprelaxations} we review existing LP relaxations for Problem~\ref{map5} and propose new LP relaxations for it. In Section~\ref{sec:images} we consider the image restoration problem  while in Section~\ref{sec:decoding} we consider the problem of decoding error-correcting codes.

\section{Linear programming relaxations}
\label{sec:lprelaxations}

With the objective of constructing LP relaxations for Problem~\ref{map5}, following a common practice in nonconvex optimization, we start by linearizing its objective function.
Define $\bar P(C) := P(C) \setminus (C \cup \{\emptyset\})$ for all $C \in \C$.
For notational simplicity, henceforth we denote variables $x_v$, $v \in V$ by $z_v$, $v \in V$. Define an auxiliary variable $z_e := \prod_{v\in e}{z_v}$
for all $e \in \bar P(C)$ and for all $C \in \C$. Then an equivalent reformulation of Problem~\ref{map5} in a lifted space of variables is given by:
\begin{align}\label{map6}\tag{B-UGM}
    {\rm max} \quad & \alpha \Big(\sum_{v\in V: y_v=1}{z_v}-\sum_{v\in V: y_v=0}{z_v}\Big)+\sum_{v\in V}{c_v z_v}+\sum_{C \in \C}{\sum_{e \in \bar P(C)} c_e z_e}\\
    \st \quad & z_e = \prod_{v \in e} {z_v}, \; \forall e \in \bar P(C), \; \forall C \in \C\nonumber\\
    & z_v \in \{0,1\}, \; \forall v \in V \nonumber.
\end{align}
Define the hypergraph $\G=(V,\E)$ with the node set $V$ and the edge set $\E:=\cup_{C \in \C}{\bar P(C)}$. The \emph{rank} of $\G$ is defined as the maximum cardinality of any edge in $\G$.
Following the convention first introduced in~\cite{dPKha17MOR}, we define the \emph{multilinear set} as:
\begin{equation}\label{multSet}
S(\G)=\Big\{z\in \{0,1\}^{V\cup\E}: z_e = \prod_{v \in e} {z_v}, \; \forall e \in \bar P(C), \; \forall C \in \C\Big\}.
\end{equation}
We refer to the convex hull of $S(\G)$ as the \emph{multilinear polytope} and denote it by $\MP(\G)$. Henceforth, we refer to the hypergraph $\G$ associated with Problem~\ref{map6} as an \emph{UGM hypergraph}.
To obtain an LP relaxation for Problem~\ref{map6}, it suffices to construct a polyhedral relaxation for the multilinear set $S(\G)$. Notice that the rank of a UGM hypergraph equals the size of the largest clique $C$ in the corresponding UGM; this number in turn is always quite small in practice. This key property enables us to obtain strong and yet cheaply computable relaxations for the multilinear polytope of a UGM hypergraph.
In the remainder of this section, we briefly review existing LP relaxations for Problem~\ref{map6}; subsequently, we propose new LP relaxations for it.

\subsection{The standard linearization}
\label{sec:stdLP}

The simplest and perhaps the oldest technique to convexify the multilinear set $S(\G)$ is to replace the feasible region defined by each product term $z_{e} = \prod_{v \in e} z_{v}$
over the set of binary points with its convex hull. We then obtain our first polyhedral relaxation of $S(\G)$:
\begin{align}\label{stdre}
\MP^{\LP}(\G) =\Big\{z: \; & z_v \leq 1, \; \forall v \in V,
z_e  \geq 0, \; z_e \geq \sum_{v\in e}{z_v}-|e|+1, \; \forall e \in \bar P(C), \forall C \in \C,\nonumber\\
 & z_e  \leq z_v, \forall v \in e, \; e \in \bar P(C), \forall C \in \C \Big\}.
\end{align}
The above relaxation has been used extensively in the literature and is often referred to as the~\emph{standard linearization}
of the multilinear set (see for example~\cite{gw74,Cra93}).
We then define our first LP relaxation which we refer to as the \emph{standard LP}:
\begin{align}\label{stdLP}\tag{stdLP}
    \max \quad &  \alpha \Big(\sum_{v\in V: y_v=1}{z_v}-\sum_{v\in V: y_v=0}{z_v}\Big)+\sum_{v\in V}{c_v z_v}+\sum_{C \in \C}{\sum_{e \in \bar P(C)} c_e z_e} \\
     \st \quad & z \in   \MP^{\LP}(\G). \nonumber
\end{align}
In~\cite{dPKha18SIOPT,BucCraRod18}, the authors prove that $\MP^{\LP}(\G)  = \MP(\G)$ if and only if $\G$ is a Berge-acyclic hypergraph; \ie the most restrictive type of acyclic hypergraphs~\cite{fag83}. However, a UGM hypergraph is not Berge-acyclic and indeed
our numerical experiments indicate that Problem~\ref{stdLP} leads to very weak upper bounds for Problem~\ref{map6}.

\subsection{The flower relaxation}
In~\cite{dPKha18SIOPT}, the authors introduce flower inequalities, a family of valid inequalities for the multilinear polytope, which strengthens the standard linearization. Flower inequalities were later generalized in~\cite{Kha22} and in the following we use this more general definition. Let $e_0, e_k$, $k \in T$, $T \neq \emptyset$ be edges of $\G$ such that
\begin{equation}\label{cond}
    \Big|(e_0 \cap e_k) \setminus \bigcup_{j \in T \setminus \{k\}}{(e_0\cap e_j)}\Big| \geq 2, \quad \forall k \in T.
\end{equation}
Then the~\emph{flower inequality} centered at $e_0$ with neighbors $e_k$, $k \in T$, is given by:
\begin{equation}\label{flowerIneq}
\sum_{v \in e_0\setminus \cup_{k\in T} {e_k}}{z_v}+\sum_{k \in T} {z_{e_k}} - z_{e_0} \leq |e_0\setminus \cup_{k\in T} {e_k}|+|T|-1.
\end{equation}
If $|T|=1$, flower inequalities simplify to two-link inequalities introduced in~\cite{crama2017class}.
We define the \emph{flower relaxation} $\MP^F(\G)$ as the polytope obtained by adding all flower inequalities centered at each edge of $\G$ to $\MP^{\LP}(\G)$.
In~\cite{dPKhaSah20MPC}, the authors prove that while the separation problem over the flower relaxation is $\NP$-hard for general hypergraphs, it can be solved in polynomial time for hypergraphs with fixed rank. As we discussed before, in case of a UGM hypergraph, it is reasonable to assume that the rank is fixed. In fact, as we show next, for a UGM hypergraph, it suffices to include only a small number of flower inequalities in the flower relaxation.
%For each $C \in \C$ consider the following flower inequalities of $\MP^F(\G)$:
%\begin{align*}
%    & z_e \leq z_{e \setminus \{v\}}, \quad \forall v \in e, \; \forall e \subseteq \bar P(C).
%\end{align*}
%
\begin{lemma}\label{lem:redfl}
    Let $\G=(V,\E)$ with $\E=\cup_{C \in \C}{\bar P(C)}$ be a UGM hypergraph and consider the flower relaxation $\MP^F(\G)$ of $S(\G)$. Denote by
    $\MP^{F'}(\G)$ the polytope obtained by adding all flower inequalities~\eqref{flowerIneq} satisfying
    \begin{equation}\label{nonred}
        \bigcup_{k \in T}{e_k} \cup e_0 \subseteq C, \quad {\rm for \; some} \; C \in \C,
    \end{equation}
    to the standard linearization. Then $\MP^F(\G) = \MP^{F'}(\G)$.
\end{lemma}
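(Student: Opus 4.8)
\section*{Proof plan for Lemma~\ref{lem:redfl}}

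The plan is to establish the two inclusions separately. Since $\MP^{F'}(\G)$ is obtained from the standard linearization $\MP^{\LP}(\G)$ by imposing only those flower inequalities satisfying the single-clique condition \eqref{nonred}, whereas $\MP^F(\G)$ imposes \emph{all} flower inequalities, we trivially have $\MP^F(\G)\subseteq\MP^{F'}(\G)$. For the reverse inclusion it suffices to show that an \emph{arbitrary} flower inequality \eqref{flowerIneq}, centered at an edge $e_0$ with neighbors $e_k$, $k\in T$, is implied by the inequalities defining $\MP^{F'}(\G)$. Fix such an inequality and fix maximal cliques $C_0,C_k\in\C$ with $e_0\subseteq C_0$ and $e_k\subseteq C_k$ for all $k\in T$; these exist since $e_0,e_k\in\E=\bigcup_{C\in\C}\bar P(C)$.

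The key idea is to ``pull each neighbor into the central clique.'' For $k\in T$ set $f_k:=e_0\cap e_k$. Condition \eqref{cond} forces $|f_k|\ge 2$, and $f_k\subseteq e_0\subseteq C_0$, so $f_k\in\E$; moreover $f_k\subseteq e_k\subseteq C_k$. I will use two families of ingredients, each consisting of flower inequalities satisfying \eqref{nonred}, hence valid on $\MP^{F'}(\G)$. (i) For each $k\in T$ with $f_k\neq e_k$, the inequality $z_{e_k}\le z_{f_k}$: this is exactly the flower inequality centered at $f_k$ with the single neighbor $e_k$, since $f_k\subseteq e_k$ makes \eqref{cond} read $|f_k|\ge 2$ and collapses \eqref{flowerIneq} to $z_{e_k}-z_{f_k}\le 0$; and it satisfies \eqref{nonred} because $f_k\cup e_k=e_k\subseteq C_k$. (ii) The flower inequality centered at $e_0$ with neighbors $f_k$, $k\in T$: here $f_k\subseteq e_0$ gives $e_0\cap f_k=f_k$, so the left-hand set in \eqref{cond} for this configuration is $f_k\setminus\bigcup_{j\neq k}f_j=(e_0\cap e_k)\setminus\bigcup_{j\neq k}(e_0\cap e_j)$, which has cardinality $\ge 2$ by \eqref{cond} for the original flower (this also shows the $f_k$ are pairwise distinct); and it satisfies \eqref{nonred} with the clique $C_0$, since $e_0\cup\bigcup_{k\in T}f_k\subseteq C_0$.

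It remains to combine (i) and (ii). Since $\bigcup_{k\in T}f_k=\bigcup_{k\in T}(e_0\cap e_k)=e_0\cap\bigcup_{k\in T}e_k$, we get $e_0\setminus\bigcup_k f_k=e_0\setminus\bigcup_k e_k$; hence the free-vertex term and the right-hand side of the flower inequality from (ii) coincide verbatim with those of \eqref{flowerIneq}. Adding to the inequality from (ii) the inequalities $z_{e_k}-z_{f_k}\le 0$ from (i), summed over $k\in T$, replaces $\sum_k z_{f_k}$ by $\sum_k z_{e_k}$ on the left and leaves the right-hand side unchanged, yielding exactly \eqref{flowerIneq}. The only genuine edge case is $f_k=e_0$ for some $k$, i.e.\ $e_0\subseteq e_k$, in which (ii) degenerates; but then $e_0\setminus\bigcup_j e_j=\emptyset$ and \eqref{flowerIneq} reduces to $\sum_{j\in T\setminus\{k\}}z_{e_j}+z_{e_k}-z_{e_0}\le|T|-1$, which follows from the flower inequality $z_{e_k}\le z_{e_0}$ (it satisfies \eqref{nonred} because $e_0\cup e_k=e_k\subseteq C_k$) together with the bounds $z_{e_j}\le 1$ present in $\MP^{\LP}(\G)$. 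In all cases every $z\in\MP^{F'}(\G)$ satisfies \eqref{flowerIneq}, so $\MP^{F'}(\G)\subseteq\MP^F(\G)$, completing the proof.

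The main obstacle — really the only content — is recognizing the monotonicity inequality $z_{e_k}\le z_{e_0\cap e_k}$ as a \emph{clique-local} flower inequality that rewrites a ``cross-clique'' flower inequality centered at $e_0$ purely in terms of sub-edges of $e_0$; the rest is bookkeeping to verify that this rewriting preserves condition \eqref{cond}, produces only inequalities available in $\MP^{F'}(\G)$ via \eqref{nonred}, and keeps the constants aligned.
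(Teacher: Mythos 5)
Your proof is correct and follows essentially the same route as the paper's: replace each neighbor $e_k$ by $e_0\cap e_k$ (which lies in $\E$ and, with $e_0$, sits inside a single maximal clique), add the two-link flower inequalities $z_{e_k}\le z_{e_0\cap e_k}$, and sum to recover the general flower inequality. Your explicit verification of condition~\eqref{cond} for the rewritten flower and your handling of the degenerate case $e_0\subseteq e_k$ match the paper's case analysis in substance.
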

\begin{proof}
By construction, $\MP^F(\G) \subseteq \MP^{F'}(\G)$. Hence, it suffices to show that  $\MP^{F'}(\G) \subseteq \MP^{F}(\G)$.
To this end, we show that any flower inequality not satisfying condition~\eqref{nonred} is implied by some flower inequalities that satisfy this condition. Let $e_0, e_k$, $k \in T$ be edges of $\G$ satisfying condition~\eqref{cond} but not satisfying condition~\eqref{nonred}. Then the following flower inequality is present in $\MP^F(\G)$:
\begin{equation}\label{flowerred}
\sum_{v \in e_0\setminus \cup_{k\in T} {e_k}}{z_v}+\sum_{k \in T} {z_{e_k}} - z_{e_0} \leq |e_0\setminus \cup_{k\in T} {e_k}|+|T|-1.
\end{equation}
Define $\gamma(e_0, e_k, k\in T):= \bigcup_{k \in T}{e_k} \cup e_0$.
First consider the case where $e_j \supset e_0$ for some $j \in T$; then by~\eqref{cond} we must have $|T|= 1$, implying that $\gamma(e_0, e_j) = e_j$ and hence condition~\eqref{nonred} holds. Henceforth, suppose that $e_k \not\supset e_0$ for all $k \in T$. Notice that if $e_k \subset e_0$ for all $k \in T$, then we have $\gamma(e_0, e_k, k\in T) = e_0$ and condition~\eqref{nonred} is trivially satisfied. Denote by $T'$ the nonempty set containing all $k \in T$ satisfying $e_k \setminus e_0 \neq \emptyset$. Define $\tilde e_k = e_k \cap e_0$ for all $k \in T'$. By definition of a UGM hypergraph and condition~\eqref{cond}, we have $\tilde e_k \in \E$ for all $k \in T'$. Hence the following flower inequalities are also present in $\MP^F(\G)$:
\begin{align}
 \sum_{v \in e_0\setminus \cup_{k\in T} {e_k}}&{z_v}+\sum_{k \in T \setminus T'} {z_{e_k}} +\sum_{k \in T'}{z_{\tilde e_k}}- z_{e_0} \leq |e_0\setminus \cup_{k\in T} {e_k}|+|T|-1\label{first}\\
& z_{e_k}-z_{\tilde e_k} \leq 0, \quad \forall k \in T',\label{second}
\end{align}
where we used the identity
$e_0\setminus \cup_{k\in T} {e_k} = e_0 \setminus ((\cup_{k\in T \setminus T'}{e_k}) \cup (\cup_{k\in T'}{\tilde e_k}))$.
First, it is simple to verify that condition~\eqref{nonred} is satisfied for inequalities~\eqref{first} and~\eqref{second}. Second, summing up inequalities~\eqref{first} and~\eqref{second}, we obtain inequality~\eqref{flowerred}, implying its redundancy, and this completes the proof.
\end{proof}
We then define our next LP relaxation, which we refer to as the \emph{flower LP}:
\begin{align}\label{flLP}\tag{flLP}
    \max \quad &  \alpha \Big(\sum_{v\in V: y_v=1}{z_v}-\sum_{v\in V: y_v=0}{z_v}\Big)+\sum_{v\in V}{c_v z_v}+\sum_{C \in \C}{\sum_{e \in \bar P(C)} c_e z_e} \\
     \st \quad & z \in   \MP^{F}(\G). \nonumber
\end{align}
In~\cite{dPKha18SIOPT}, the authors prove that $\MP^F(\G) =\MP(\G)$ if and only if $\G$ is a $\gamma$-acyclic hypergraph. Note that $\gamma$-acyclic hypergraphs represent a significant generalization of Berge-acyclic hypergraphs~\cite{fag83}. While a UGM hypergraph is not
$\gamma$-acyclic in general, as we show in our numerical experiments, the flower LP is significantly stronger than the standard LP.

\subsection{The running intersection relaxation}
In~\cite{dPKha21MOR}, the authors introduce running intersection inequalities, a family of valid inequalities for the multilinear polytope, which strengthens the flower relaxation (see~\cite{dPKhaSah20MPC} for a detailed computational study). Running intersection inequalities were later generalized in~\cite{dPKha23mMPA} and in the following we use this more general definition. To define these inequalities, we first introduce the notion of running intersection property~\cite{BeFaMaYa83}.
A set $F$ of subsets of a finite set $V$ has the \emph{running intersection property} if there exists an ordering $p_1, p_2, \ldots, p_m$ of the sets in $F$ such that
\begin{equation}
\label{ripeq}
\text{for each $k = 2, \dots,m$, there exists $j < k$ such that $p_k \cap \Big(\bigcup_{i < k}{p_i}\Big) \subseteq p_j$.}
\end{equation}
We refer to an ordering $p_1, p_2, \ldots, p_m$ satisfying~\eqref{ripeq} as a \emph{running intersection ordering} of $F$.
Each running intersection ordering $p_1, p_2, \ldots, p_m$ of $F$ induces a collection of sets
\begin{equation}
\label{ripeqsets}
N(p_1) := \emptyset, \qquad N(p_k) := p_k \cap \Big(\bigcup_{i < k}{p_i}\Big) \text{ for } k=2,\dots,m.
\end{equation}
We are now ready to define running intersection inequalities.
Let $e_0$, $e_k$, $k \in T$, be edges of $\G$ such that
\begin{itemize}
\item[(i)] $|e_0 \cap e_k| \geq 2$ for all $k \in T$,
\item[(ii)] $e_0 \cap e_k \not\subseteq e_0 \cap e_{k'}$ for any $k\neq  k' \in T$,
\item[(iii)] the set $\tilde E := \{e_0 \cap e_k : k \in T\}$
has the running intersection property.
\end{itemize}
Consider a running intersection ordering of $\tilde E$ with the sets $N(e_0 \cap e_k)$, for all $k \in T$, as defined in~\eqref{ripeqsets}.
For each $k \in T$, let $w_k \subseteq N(e_0 \cap e_k)$ such that $w_k \in \{\emptyset\} \cup V \cup \E$.
We define a~\emph{running intersection inequality} centered at $e_0$ with neighbors $e_k$, $k \in T$ as:
\begin{equation}
\label{eq: rie}
- \sum_{k \in T} z_{w_k}
+ \sum_{v \in e_0 \setminus \bigcup_{k \in T} e_k} z_v
+ \sum_{k \in T}{z_{e_k}}
- z_{e_0} \leq \omega-1,
\end{equation}
where we define $z_{\emptyset} := 0$, and
$$\omega := \Big|e_0 \setminus \bigcup_{k \in T} e_k \Big|+\Big|\Big\{k \in T : N(e_0 \cap e_k) = \emptyset \Big\}\Big|.$$
Notice that by letting $w_k = \emptyset$ for all $k \in T$, the running intersection inequality~\eqref{eq: rie} simplifies to the flower inequality~\eqref{flowerIneq}.
Consider $e_0, e_k$, $k \in T$ such that $\tilde E$ has the running intersection property and $N(e_0 \cap e_k) \neq \emptyset$ for some $k \in T$. Then any running intersection inequality centered at $e_0$  with neighbors $e_k$, $k \in T$ satisfying $w_k \neq \emptyset$ for some $k \in T$ together with $z_{w_k} \leq 1$ imply the flower inequality centered at $e_0$  with neighbors $e_k$, $k \in T$. However, in general flower inequalities are not implied by running intersection inequalities, since for flower inequalities we do not require the set $\tilde E$ to have the running intersection property.

We then define the~\emph{running intersection relaxation} of $S(\G)$, denoted by $\MP^{\rm RI}(\G)$, as the polytope obtained by adding to the flower relaxation, all running intersection inequalities of $S(\G)$. As in the case of flower inequalities, for a UGM hypergraph, we can establish the redundancy of a large number of running intersection inequalities:

\begin{lemma}\label{lem:redrI}
    Let $\G=(V,\E)$ with $\E=\cup_{C \in \C}{\bar P(C)}$ be a UGM hypergraph and consider the running intersection relaxation $\MP^{\rm RI}(\G)$ of $S(\G)$. Denote by
    $\MP^{\rm RI'}(\G)$ the polytope obtained by adding all running intersection inequalities~\eqref{eq: rie} satisfying condition~\eqref{nonred}
    to the flower relaxation. Then $\MP^{\rm RI}(\G) = \MP^{\rm RI'}(\G)$.
\end{lemma}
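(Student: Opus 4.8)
The plan is to mirror the proof of Lemma~\ref{lem:redfl}. Since $\MP^{\rm RI'}(\G)$ is defined by a subset of the inequalities defining $\MP^{\rm RI}(\G)$, the inclusion $\MP^{\rm RI}(\G) \subseteq \MP^{\rm RI'}(\G)$ is immediate, and it remains to prove $\MP^{\rm RI'}(\G) \subseteq \MP^{\rm RI}(\G)$. For this it suffices to show that every running intersection inequality~\eqref{eq: rie} that violates condition~\eqref{nonred} is implied by inequalities already present in $\MP^{\rm RI'}(\G)$, namely running intersection inequalities satisfying~\eqref{nonred} and flower inequalities (the latter because $\MP^{\rm RI'}(\G) \subseteq \MP^F(\G)$).

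Fix such an inequality, centered at $e_0$ with neighbors $e_k$, $k \in T$, and weights $w_k \subseteq N(e_0 \cap e_k)$. First I would dispose of the degenerate case: if $e_j \supseteq e_0$ for some $j \in T$, then condition~(ii) forces $|T|=1$, so $\bigcup_{k \in T} e_k \cup e_0 = e_j$, which lies in a maximal clique since $e_j \in \E$; thus~\eqref{nonred} holds, contradicting our assumption. Hence $e_k \not\supseteq e_0$ for all $k \in T$. Now for each $k \in T$ set $\tilde e_k := e_0 \cap e_k$. By condition~(i) we have $|\tilde e_k| \geq 2$, and since $\tilde e_k \subseteq e_0$ and $e_0$ is contained in some $C \in \C$, the definition of a UGM hypergraph gives $\tilde e_k \in \E$. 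The crux is to check that replacing each $e_k$ by $\tilde e_k$ preserves all the hypotheses in the definition of a running intersection inequality: (a) $e_0 \cap \tilde e_k = e_0 \cap e_k$, so the set $\tilde E$, a fixed running intersection ordering of it, and the induced sets $N(\cdot)$, and hence the admissibility of the same weights $w_k$ and the value of $\omega$, are all unchanged; (b) $e_0 \setminus \bigcup_{k \in T} \tilde e_k = e_0 \setminus \bigcup_{k \in T} e_k$; and (c) consequently conditions~(i)--(iii) still hold for $e_0$ with neighbors $\tilde e_k$, $k \in T$. Therefore
\[
-\sum_{k \in T} z_{w_k} + \sum_{v \in e_0 \setminus \bigcup_{k \in T} e_k} z_v + \sum_{k \in T} z_{\tilde e_k} - z_{e_0} \leq \omega - 1
\]
is a valid running intersection inequality of $S(\G)$, and it satisfies~\eqref{nonred}, since $\bigcup_{k \in T} \tilde e_k \cup e_0 = e_0 \subseteq C$; hence it is one of the inequalities defining $\MP^{\rm RI'}(\G)$.

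To finish, observe that for every $k \in T$ the inequality $z_{e_k} - z_{\tilde e_k} \leq 0$ is precisely the flower inequality centered at $\tilde e_k$ with the single neighbor $e_k$ (condition~\eqref{cond} holds because $|\tilde e_k \cap e_k| = |\tilde e_k| \geq 2$), hence it is present in $\MP^F(\G)$ and therefore among the inequalities defining $\MP^{\rm RI'}(\G)$. Summing the displayed running intersection inequality with these $|T|$ flower inequalities makes the terms $\sum_{k} z_{\tilde e_k}$ cancel and produces $\sum_{k} z_{e_k}$ in their place, while the right-hand side stays $\omega - 1$; the result is exactly the original inequality~\eqref{eq: rie}, which is therefore redundant. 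I expect the only real obstacle to be items~(a)--(c): carefully verifying that passing from $e_k$ to $e_0 \cap e_k$ leaves conditions~(ii) and~(iii), the running intersection ordering, the weights $w_k$, the constant $\omega$, and the set $e_0 \setminus \bigcup_{k} e_k$ literally intact, so that the final summation collapses cleanly; the degenerate case $e_j \supseteq e_0$ is a minor point already settled by condition~(ii).
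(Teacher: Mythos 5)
Your proof is correct and follows exactly the route the paper intends: the paper's own proof of this lemma is a one-line reference to the argument of Lemma~\ref{lem:redfl}, and you have carried out precisely that adaptation---replacing each neighbor $e_k$ by $\tilde e_k = e_0 \cap e_k$, checking that conditions (i)--(iii), the sets $N(\cdot)$, the weights $w_k$, and the constant $\omega$ are unchanged, and recovering the original inequality by adding the two-link flower inequalities $z_{e_k} \le z_{\tilde e_k}$. The only cosmetic point is that for $k$ with $e_k \subseteq e_0$ one has $\tilde e_k = e_k$ and the added inequality degenerates to $0 \le 0$, which is why the proof of Lemma~\ref{lem:redfl} singles out the subset $T'$; this does not affect correctness.
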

\begin{proof}
    The proof follows from a similar line of arguments to those in the proof of Lemma~\ref{lem:redfl}.
\end{proof}

We then define our next LP relaxation, which we refer to as the \emph{running LP}:
\begin{align}\label{runLP}\tag{runLP}
    \max \quad &  \alpha \Big(\sum_{v\in V: y_v=1}{z_v}-\sum_{v\in V: y_v=0}{z_v}\Big)+\sum_{v\in V}{c_v z_v}+\sum_{C \in \C}{\sum_{e \in \bar P(C)} c_e z_e} \\
     \st \quad & z \in   \MP^{\rm RI}(\G). \nonumber
\end{align}
In~\cite{dPKhaSah20MPC}, the authors prove that for hypergraphs with fixed rank, the separation problem over running intersection inequalities can be solved in polynomial time. Our computational results indicate that the running LP is significantly stronger than the flower LP. However, the added strength often comes at the cost of a rather significant increase in CPU time.

\subsection{The clique relaxation}
\label{sec:cliqueLP}

A hypergraph $\bar G$ with node set $\bar V$ is a \emph{complete hypergraph}, if its edge set consists of all subsets of $\bar V$ of cardinality at least two.
It then follows that a UGM hypergraph $\G=(V,\E)$ with $\E=\cup_{C \in \C}{\bar P(C)}$ can be written as a union of complete hypergraphs $\G = \cup_{C \in \C}{\G_C}$, where $\G_C$ denotes a complete hypergraph with node set $C$. We then define the \emph{clique relaxation} of the multilinear set, denoted by $\MP^{\rm cl}(\G)$, as the polytope obtained by intersecting all multilinear polytopes of complete hypergraphs; \ie $\MP(\G_C)$ for all $C \in \C$. An explicit description for the multilinear polytope of a complete hypergraph can be obtained using Reformulation Linearization Technique (RLT)~\cite{SheAda90}.
For completeness, we present this description next.

\begin{prop}[Theorem~2 in~\cite{SheAda90}]\label{prop:RLT}
 Let $\G_C$ be a complete hypergraph with node set $C$. Then the multilinear polytope $\MP(\G_C)$ is given by
\begin{equation}
\psi_U(z_C) \geq 0 \quad \forall U \subseteq C\label{eq:rlt},
\end{equation}
where
\begin{equation}\label{defRLT}
    \psi_U(z_C):=\sum_{\substack{W \subseteq C \cap U: \\ |W| \;{\rm even}}}{z_{(C\setminus U)\cup W}}-\sum_{\substack{W \subseteq C \cap U: \\ |W|\; {\rm odd}}}{z_{(C\setminus U)\cup W}},
\end{equation}
and we define $z_{\emptyset} := 1$.
\end{prop}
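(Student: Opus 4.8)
The plan is to establish equality between $\MP(\G_C)$ and the polyhedron $R := \{z : \psi_U(z_C) \ge 0 \text{ for all } U \subseteq C\}$ by proving the two inclusions separately, with everything resting on one structural identity: for each $U \subseteq C$, the affine function $\psi_U(z_C)$ is the linearization of the multilinear polynomial $q_U(z) := \prod_{v \in C \setminus U} z_v \cdot \prod_{v \in U}(1 - z_v)$. To see this, expand $\prod_{v \in U}(1 - z_v) = \sum_{W \subseteq U}(-1)^{|W|} \prod_{v \in W} z_v$, multiply by $\prod_{v \in C \setminus U} z_v$ to obtain $\sum_{W \subseteq U}(-1)^{|W|} \prod_{v \in (C \setminus U) \cup W} z_v$, and replace each monomial $\prod_{v \in e} z_v$ by the variable $z_e$ (with $z_\emptyset := 1$); since $C \cap U = U$, this is exactly~\eqref{defRLT}. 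I would record at the outset the only two facts about linearization that the argument needs: it is a linear map on coefficient vectors, and it leaves unchanged the value of a polynomial at every point of $\{0,1\}^C$.

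For validity ($\MP(\G_C) \subseteq R$): every point $\bar z \in S(\G_C)$ has the form $\bar z_e = \prod_{v \in e} x_v$ for some $x \in \{0,1\}^C$, so $\psi_U(\bar z_C) = q_U(x) \ge 0$ because $q_U$ is a product of factors each lying in $\{0,1\}$ on $\{0,1\}^C$. Hence each inequality $\psi_U(z_C) \ge 0$ is valid for $S(\G_C)$ and therefore for $\MP(\G_C) = \conv(S(\G_C))$.

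For the reverse inclusion ($R \subseteq \MP(\G_C)$), I would argue that the RLT functions themselves supply the convex-combination weights. Given $z \in R$, for each $x \in \{0,1\}^C$ put $U_x := \{v \in C : x_v = 0\}$ and $\lambda_x := \psi_{U_x}(z_C)$; membership in $R$ gives $\lambda_x \ge 0$. Denoting by $\bar z^{(x)}$ the point of $S(\G_C)$ with $\bar z^{(x)}_e = \prod_{v \in e} x_v$, the claim is $z = \sum_{x \in \{0,1\}^C} \lambda_x \, \bar z^{(x)}$, which immediately yields $z \in \MP(\G_C)$. This follows from two polynomial identities obtained by telescoping $z_v + (1 - z_v) = 1$: first, $\sum_{U \subseteq C} q_U(z) = \prod_{v \in C}(z_v + (1 - z_v)) = 1$ identically, and linearizing gives $\sum_x \lambda_x = 1$; second, for every nonempty $e \subseteq C$, restricting the sum to those $x$ with $x_v = 1$ on $e$ gives $\sum_{x \in \{0,1\}^C} \big(\prod_{v \in e} x_v\big) q_{U_x}(z) = \prod_{v \in e} z_v \cdot \prod_{v \in C \setminus e}(z_v + (1 - z_v)) = \prod_{v \in e} z_v$ identically, and linearizing (using $\bar z^{(x)}_e = \prod_{v \in e} x_v$) gives $\sum_x \lambda_x \bar z^{(x)}_e = z_e$. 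Together with the trivial case $e = \emptyset$, these are precisely the coordinate equations of $z = \sum_x \lambda_x \bar z^{(x)}$.

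There is no genuinely hard step here; the only thing that needs care in the write-up is the index bookkeeping — that $C \cap U = U$ for $U \subseteq C$, the complementation $x \leftrightarrow U_x$ inside $C$, the convention $z_\emptyset = 1$, and keeping the distinction between a monomial $\prod_{v \in e} z_v$ and its linearized variable $z_e$. I would also remark that the argument proves slightly more than the stated equality: the weights $\lambda_x = \psi_{U_x}(z_C)$ give an explicit (indeed canonical) representation of any $z \in R$ as a convex combination of the vertices of $S(\G_C)$.
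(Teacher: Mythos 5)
Your proof is correct. Note that the paper does not actually prove this statement: it is imported verbatim as Theorem~2 of Sherali--Adams \cite{SheAda90}, so there is no in-paper argument to compare against. What you have written is the standard, self-contained proof of that theorem, and it is complete: the key observation that $\psi_U$ is the linearization of the ``atom'' polynomial $q_U(z)=\prod_{v\in C\setminus U}z_v\prod_{v\in U}(1-z_v)$ is right (using $C\cap U=U$), the validity direction is immediate from $q_U(x)\in\{0,1\}$ on binary points, and the converse correctly exhibits the numbers $\lambda_x=\psi_{U_x}(z_C)$ as convex multipliers via the two telescoping identities $\sum_{U\subseteq C}q_U\equiv 1$ and $\sum_{U\subseteq C\setminus e}q_U\equiv\prod_{v\in e}z_v$, linearized term by term. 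This is essentially a M\"obius-inversion argument (the map $z\mapsto(\psi_U(z_C))_{U\subseteq C}$ is the change of basis from the ``moment'' coordinates to the ``atom probability'' coordinates, under which $S(\G_C)$ becomes the set of $0/1$ unit vectors and $\MP(\G_C)$ the simplex), and it is more elementary than invoking the full RLT hierarchy machinery of \cite{SheAda90}; your closing remark that it yields a canonical convex representation of every point of the relaxation is also accurate and is exactly what Proposition~\ref{cnvr} later exploits. One small presentational point: when you linearize the identity $\sum_{U\subseteq C\setminus e}q_U\equiv\prod_{v\in e}z_v$ you should say explicitly that the cancellation of all other monomials happens at the level of coefficient vectors (so linearity of the linearization map applies); you state this fact up front, so the argument is sound, but the write-up should point to it at that step.
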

By Proposition~\ref{prop:RLT}, the clique relaxation $\MP^{\rm cl}(\G)$ consist of $\sum_{C \in \C}{2^{|C|}}$ inequalities. Hence, this relaxation is computationally tractable only if the rank of the UGM hypergraph is small; a property that is present in all relevant applications. We now define our next LP relaxation which we refer to as the \emph{clique LP}:
\begin{align}\label{cliqueLP}\tag{cliqueLP}
    \max \quad &  \alpha \Big(\sum_{v\in V: y_v=1}{z_v}-\sum_{v\in V: y_v=0}{z_v}\Big)+\sum_{v\in V}{c_v z_v}+\sum_{C \in \C}{\sum_{e \in \bar P(C)} c_e z_e} \\
     \st \quad & z \in   \MP^{\rm cl}(\G). \nonumber
\end{align}
In Sections~\ref{sec:images} and~\ref{sec:decoding} we show that the clique LP returns a binary solution in many cases of practical interest. We next present a theoretical justification of this fact. That is, we show that all inequalities defining facets of the clique relaxation $\MP^{\rm cl}(\G)$ are facet-defining for the multilinear polytope $\MP(\G)$ as well. To this end, we make use of a zero-lifting operation for the multilinear polytope that was introduced in~\cite{dPKha17MOR}. Let $\G=(V,\E)$ be a hypergraph. Then the hypergraph $\G^{\prime}=(V^{\prime}, \E^{\prime})$ is a \emph{partial hypergraph} of $\G$ if $V^{\prime} \subseteq V$ and
$\E^{\prime} \subseteq \E$.
The following lemma provides a sufficient condition under which a facet-defining inequality for $\MP(\G')$ is also facet-defining for $\MP(\G)$.

\begin{lemma}[Corollary~4 in~\cite{dPKha17MOR}]\label{lem:lifting}
Let the complete hypergraph $\G'=(V',\E')$ be a partial hypergraph of $\G=(V, \E)$. Then all facet-defining inequalities for $\MP(\G')$ are facet-defining for $\MP(\G)$ if and only if there exists no edge $e \in \E$ such that $e \supset V'$.
\end{lemma}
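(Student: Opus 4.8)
The plan is to establish the two implications separately, after two reductions. First, recall that the multilinear polytope of any hypergraph is full-dimensional, so $\dim\MP(\G)=|V|+|\E|$ and $\dim\MP(\G')=|V'|+|\E'|$; consequently a valid inequality is facet-defining precisely when every linear equation satisfied on the face it determines is a scalar multiple of the inequality itself. Second, since $\G'$ is complete, Proposition~\ref{prop:RLT} tells us that its facet-defining inequalities are exactly $\psi_U(z_{V'})\ge 0$ for $U\subseteq V'$, so it suffices to analyze the zero-liftings of these. For $A\subseteq V$ write $\zeta^A\in S(\G)$ for the point with $z_v=[v\in A]$ and $z_e=[e\subseteq A]$; the $\G'$-restriction of $\zeta^A$ equals $\zeta^{A\cap V'}$, and evaluating~\eqref{defRLT} at $0/1$ points gives that $\psi_U(z_{V'})$ vanishes at $\zeta^A$ exactly when $A\cap V'\ne V'\setminus U$.

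For the ``only if'' direction, suppose $e^\star\in\E$ satisfies $e^\star\supsetneq V'$; I claim the zero-lifting of the particular facet $\psi_\emptyset(z_{V'})\ge 0$, namely the inequality $z_{V'}\ge 0$ (where $z_{V'}$ is the node variable when $|V'|=1$ and the edge variable $z_{V'}$ when $|V'|\ge 2$), is not facet-defining for $\MP(\G)$. Indeed, $V'\subseteq e^\star$ forces $\prod_{v\in e^\star}z_v\le\prod_{v\in V'}z_v$ at every $0/1$ point, so $z_{e^\star}\le z_{V'}$ is valid for $\MP(\G)$; hence the face $\{z\in\MP(\G):z_{V'}=0\}$, which is nonempty ($\zeta^\emptyset$) and proper ($\zeta^{V'}$), lies on the distinct supporting hyperplane $\{z_{e^\star}=0\}$ as well (distinct since $e^\star\ne V'$ makes $z_{e^\star}$ a different coordinate), so it cannot be a facet of the full-dimensional polytope $\MP(\G)$.

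For the ``if'' direction, assume no edge of $\E$ strictly contains $V'$, fix $U\subseteq V'$, and put $F:=\MP(\G)\cap\{\psi_U(z_{V'})=0\}$; validity of $\psi_U(z_{V'})\ge 0$ for $\MP(\G)$ is immediate by restriction, and $F=\conv\{\zeta^A:A\cap V'\ne V'\setminus U\}$. Let $\sum_{v\in V}c_vz_v+\sum_{e\in\E}c_ez_e=\gamma$ hold on $F$. Evaluating this on the $\zeta^R$ with $R\subseteq V'$, $R\ne V'\setminus U$ (whose new coordinates all vanish) shows it restricts on the facet $\{\psi_U(z_{V'})=0\}$ of $\MP(\G')$ to an equation in the old variables, so by full-dimensionality of $\MP(\G')$ the old part of $(c,\gamma)$ is a scalar multiple of the coefficient vector of $\psi_U(z_{V'})\ge 0$; subtracting that multiple leaves $\sum_{u\in V\setminus V'}c_uz_u+\sum_{f\in\E\setminus\E'}c_fz_f=0$ on $F$. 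It remains to deduce $c_u=c_f=0$, and this is the crux. Evaluating the last equation at $\zeta^{B\cup D}\in F$ for $B\subseteq V'$ with $B\ne V'\setminus U$ and $D\subseteq V\setminus V'$ yields
\[
\sum_{u\in D}c_u+\sum_{f\in\E\setminus\E':\,f\cap V'\subseteq B,\ f\setminus V'\subseteq D}c_f=0 \qquad\text{for all such }B,D,
\]
and one recovers the individual coefficients by Möbius inversion over the Boolean lattices of $D\subseteq V\setminus V'$ and of admissible $B$. The hypothesis that no edge contains $V'$ is exactly what makes enough bases $B$ admissible: for each new edge $f$ one needs a $B\supseteq f\cap V'$ with $V'\setminus U\not\subseteq B$, and the existence of such a $B$ fails precisely when some edge swallows $V'$, so the inversion isolates each $c_f$ and hence each $c_u$. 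An equivalent way to package this step is an induction that grows $\G$ from $\G'$ one node or one edge at a time: adding a node incident to no edge only multiplies $\MP$ by an interval and is harmless, while for adding an edge $f$ one checks that $A\mapsto[f\subseteq A]$ is not an affine function of the coordinates already present over the relevant face — the sole obstruction again being $f\supseteq V'$. I expect this decoupling of the new node- and edge-coefficients (equivalently, the verification of the atomic edge-lifting step) to be the main technical obstacle; the reductions and the necessity argument are routine.
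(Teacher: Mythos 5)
The paper offers no proof of this lemma --- it is imported verbatim as Corollary~4 of~\cite{dPKha17MOR} --- so there is nothing in-paper to compare your argument against; I am assessing it on its own terms. Your reductions are sound (full-dimensionality of $\MP(\G)$ and $\MP(\G')$, and the fact that the facets of the complete hypergraph's polytope are exactly the inequalities $\psi_U\geq 0$, since that polytope is a simplex), and your ``only if'' direction is complete and correct: if $e^\star\supsetneq V'$ then $z_{e^\star}\leq z_{V'}$ is valid, so the face $\{z_{V'}=0\}$ satisfies the additional, linearly independent equation $z_{e^\star}=0$ and has codimension at least two.

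The gap is exactly where you flag it, and it is genuine: the M\"obius-inversion step is not carried out, and the mechanism you sketch for how the hypothesis enters is not the right one. You assert that for each new edge $f$ one needs some $B\supseteq f\cap V'$ with $V'\setminus U\not\subseteq B$, the obstruction being an edge swallowing $V'$; but such a $B$ can fail to exist even under the hypothesis --- for $U=V'$ one has $V'\setminus U=\emptyset\subseteq B$ for every $B$, yet $\psi_{V'}\geq 0$ does lift to a facet (check $V'=\{1,2\}$, $\E=\{\{1,2\},\{1,3\}\}$ by hand). The correct accounting is as follows. Group the new items $p$ by the pair $(S,T)=(p\cap V',\,p\setminus V')$, which determines $p$; inverting over $D\subseteq V\setminus V'$ first shows that for each fixed nonempty $T$ the zeta transform $B\mapsto\sum_{S\subseteq B}c_{(S,T)}$ vanishes at every $B\neq V'\setminus U$. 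Invertibility of the zeta transform on $2^{V'}$ then forces $c_{(S,T)}=\mu_T(-1)^{|S\setminus(V'\setminus U)|}$ for $S\supseteq V'\setminus U$ and $c_{(S,T)}=0$ otherwise --- a \emph{one-dimensional} residual ambiguity, not zero. What kills $\mu_T$ is the single set $S=V'$: the formula prescribes the value $\mu_T(-1)^{|U|}$ there, while the hypothesis guarantees that $V'\cup T$ is not an edge of $\G$, so there is no unknown at $S=V'$ and the prescribed value must be $0$, whence $\mu_T=0$. So the hypothesis is used once per $T$, at $S=V'$, not edge by edge. Your alternative inductive packaging (adding one node or edge at a time) has the same unverified core in its edge step. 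With this computation supplied, the proof is complete.
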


The following result establishes the strength of the clique relaxation:

\begin{prop}
    Let $\G = \cup_{C \in \C}{\G_C}$ be a UGM hypergraph where $\C$ denotes the set of maximal clique of the binary UGM. Then for any $C \in \C$, all facet-defining inequalities of $\MP(\G_C)$ are facet defining for $\MP(\G)$ as well.
\end{prop}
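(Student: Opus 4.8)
The plan is to derive the result directly from Lemma~\ref{lem:lifting} (the zero-lifting criterion), applied with $\G' = \G_C$ and $V' = C$. Concretely, I would verify the two hypotheses of that lemma for the pair $\G_C,\G$: first, that $\G_C$ is a \emph{complete} partial hypergraph of $\G$; and second, that no edge $e \in \E$ strictly contains $C$. Granting both, Lemma~\ref{lem:lifting} immediately yields that every facet-defining inequality of $\MP(\G_C)$ is facet-defining for $\MP(\G)$, which is exactly the claim.

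For the first hypothesis, note that $\G_C$ is complete by definition, and $C \subseteq V$. Its edge set consists of all subsets of $C$ of cardinality at least two, which is precisely $\bar P(C)$; since $\E = \bigcup_{C' \in \C}{\bar P(C')} \supseteq \bar P(C)$, the hypergraph $\G_C$ is a partial hypergraph of $\G$, as needed.

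For the second hypothesis, I would argue by contradiction: suppose there is an edge $e \in \E$ with $e \supsetneq C$. By the definition of a UGM hypergraph, $e \in \bar P(C')$ for some maximal clique $C' \in \C$, hence $e \subseteq C'$. Combining, $C \subsetneq e \subseteq C'$, so $C$ is a clique properly contained in the clique $C'$, contradicting the maximality of $C$ in $\C$. Therefore no such edge exists, and the conclusion follows.

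The proof has essentially no computational content, so the ``main obstacle'' is only a matter of care rather than difficulty: one must read the containment in Lemma~\ref{lem:lifting} as \emph{proper} containment — indeed $C$ itself is an edge of $\G$ whenever $|C| \geq 2$, so a non-strict reading would make the criterion fail — and one must recognize that it is precisely the maximality of the cliques in $\C$ that forbids any edge strictly larger than $C$. Once this is pinned down, the statement is a single application of Lemma~\ref{lem:lifting}.
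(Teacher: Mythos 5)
Your proof is correct and follows the same route as the paper: both invoke Lemma~\ref{lem:lifting} with $\G'=\G_C$ and observe that the maximality of $C$ in $\C$ rules out any edge $e\in\E$ strictly containing $C$. You simply spell out the verification of the hypotheses (including that $\G_C$ is a complete partial hypergraph of $\G$) in more detail than the paper does.
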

\begin{proof}
Since $C \in \C$ is a maximal clique of the UGM, by definition, there exists no edge $e \in \E$ that strictly contains $C$. Hence, the assumptions of Lemma~\ref{lem:lifting}
are satisfied and the result follows.
\end{proof}

By construction, for a general UGM hypergraph $\G$ we have $\MP^{\rm RI}(\G) \subset \MP^F(\G) \subset \MP^{\rm \LP}(\G)$.
The next result indicates that the clique relaxation $\MP^{\rm cl}(\G)$ is the strongest relaxation introduced so far.

\begin{prop}\label{prop:compare}
    Let $\G = \cup_{C \in \C}{\G_C}$ be a UGM hypergraph of rank $r \geq 3$, where $\C$ denotes the set of maximal clique of the binary UGM. Then $\MP^{\rm cl}(\G) \subset
    \MP^{\rm RI}(\G)$.
\end{prop}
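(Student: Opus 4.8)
The plan is to establish the inclusion $\MP^{\rm cl}(\G) \subseteq \MP^{\rm RI}(\G)$ first, and then exhibit a single point that separates the two polytopes to obtain strictness. For the inclusion, I would argue that every running intersection inequality~\eqref{eq: rie} is valid for $\MP^{\rm cl}(\G)$. By Lemma~\ref{lem:redrI} it suffices to consider running intersection inequalities satisfying condition~\eqref{nonred}, i.e., those for which $\bigcup_{k\in T} e_k \cup e_0 \subseteq C$ for some maximal clique $C \in \C$. For such an inequality, all the variables appearing in it---namely $z_{e_0}$, $z_{e_k}$ for $k \in T$, $z_{w_k}$ for $k \in T$, and $z_v$ for $v \in e_0\setminus\bigcup_{k\in T}e_k$---are indexed by subsets of $C$, hence are coordinates that appear in $\MP(\G_C)$. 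Since running intersection inequalities are valid for the multilinear polytope of any hypergraph containing the relevant edges (this is the content of~\cite{dPKha21MOR,dPKha23mMPA}, stated earlier as the definition of $\MP^{\rm RI}$), and in particular valid for $\MP(\G_C)$ since $\G_C$ is complete and hence contains all the edges in question, the inequality is valid for $\MP(\G_C)$ and therefore for $\MP^{\rm cl}(\G) = \bigcap_{C\in\C}\MP(\G_C)$ (read in the full variable space, padding with the other coordinates). This shows $\MP^{\rm cl}(\G) \subseteq \MP^{\rm RI'}(\G) = \MP^{\rm RI}(\G)$.

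For strictness, I would use the hypothesis $r \geq 3$: pick a maximal clique $C \in \C$ with $|C| = r \geq 3$ and work inside the complete hypergraph $\G_C$. The key fact, from Proposition~\ref{prop:RLT}, is that $\MP(\G_C)$ is cut out by the RLT inequalities $\psi_U(z_C)\geq 0$, and for $|C| \geq 3$ at least one of these inequalities---for instance $\psi_U$ with $|U|$ odd, or more concretely an inequality forcing a high-degree product down---is genuinely needed and is not implied by running intersection inequalities restricted to $\G_C$. Concretely, I would produce a point $z^*$ that satisfies every flower and running intersection inequality satisfying~\eqref{nonred} (hence lies in $\MP^{\rm RI}(\G)$ by Lemma~\ref{lem:redrI}, after extending by zero-lifting to the rest of $\G$ via Lemma~\ref{lem:lifting}) but violates some RLT inequality of $\MP(\G_C)$, hence lies outside $\MP^{\rm cl}(\G)$. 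A natural candidate: for $C = \{1,2,3\}$, set the singleton and pair variables to $\tfrac12$ and the triple variable $z_{123}$ to $0$; one checks the two-link/flower inequalities and the $|T|\geq 2$ running intersection inequalities all hold with room to spare, while the RLT inequality $\psi_{\{1,2,3\}} = 1 - z_1 - z_2 - z_3 + z_{12} + z_{13} + z_{23} - z_{123} \geq 0$ evaluates to $1 - \tfrac32 + \tfrac34 - 0 = \tfrac14 \geq 0$---so that particular one is not violated; instead I would pick $\psi_{\{1\}} = z_{23} - z_{123} \geq 0$ type inequalities combined with a perturbation that drives a different RLT facet negative, or simply take $z_{123}$ slightly larger than $\min_v z_{e\setminus v}$-type bounds allow. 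The cleanest route is to directly invoke the known strict separation: since $\G_C$ is complete with $|C|\geq 3$, $\MP^{\rm RI}(\G_C) \subsetneq \MP(\G_C)$ whenever $\G_C$ is not $\gamma$-acyclic, and a complete hypergraph on $\geq 3$ nodes is not $\gamma$-acyclic; combining this with the reverse containment $\MP^{\rm cl}(\G_C) = \MP(\G_C) \subseteq \MP^{\rm RI}(\G_C)$ gives a point in $\MP^{\rm RI}$ outside $\MP^{\rm cl}$ locally, which lifts to $\G$.

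The main obstacle I anticipate is the strictness direction, specifically making the separating point globally valid: a point that beats all running intersection inequalities supported inside one clique $C$ must still be extended to a point satisfying all running intersection inequalities of the whole hypergraph $\G$, including those spanning other cliques. This is where Lemma~\ref{lem:lifting} (zero-lifting) is essential---one defines $z^*$ on the coordinates of $\G_C$ as above and sets all remaining edge coordinates to their zero-lifted values---but care is required because running intersection inequalities satisfying~\eqref{nonred} for cliques $C' \neq C$ may still involve node variables $z_v$ for $v \in C \cap C'$ that have been set to $\tfrac12$ rather than $0$. I would handle this by choosing the separating point to agree with an actual vertex of $\MP(\G_{C'})$ on the overlap, or more robustly by first picking $C$ so that its fractional assignment only touches nodes of degree confined to $C$ if such a clique exists, and otherwise by a direct verification that the $\tfrac12$-type assignment on shared nodes still satisfies the inequalities of neighboring cliques (which it does, since those inequalities are also valid for the box $[0,1]$-relaxation evaluated at such points). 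Once this bookkeeping is done, strictness follows.
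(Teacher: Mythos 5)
Your first half --- the inclusion $\MP^{\rm cl}(\G) \subseteq \MP^{\rm RI}(\G)$ via Lemmas~\ref{lem:redfl} and~\ref{lem:redrI}, reducing every defining inequality of $\MP^{\rm RI}(\G)$ to a valid inequality for $\MP(\G_C)$ for a single clique $C$ --- is exactly the paper's argument and is fine. The gap is in the strictness half. Your explicit candidate point does not work (you compute $\psi_C(z^*)=1/4\geq 0$ yourself and then leave the fix as ``pick a different RLT facet or perturb''), and your fallback rests on two claims that are not available. First, the acyclicity characterization you invoke is the wrong one: the paper's statement that sharpness holds iff the hypergraph is $\gamma$-acyclic is about the \emph{flower} relaxation $\MP^F$, not about $\MP^{\rm RI}$; since $\MP^{\rm RI}(\G_C)\subseteq\MP^F(\G_C)$, knowing that $\MP^F(\G_C)\supsetneq\MP(\G_C)$ tells you nothing about whether $\MP^{\rm RI}(\G_C)\supsetneq\MP(\G_C)$. (The correct characterization for the running intersection relaxation, not stated in this paper, involves kite-free $\beta$-acyclicity; a complete hypergraph on three nodes does fail it, but that is a result you would have to import and prove applicable, not a consequence of anything established here.) Second, even granting a local separating point in $\MP^{\rm RI}(\G_C)\setminus\MP(\G_C)$, extending it to a point of $\MP^{\rm RI}(\G)\setminus\MP^{\rm cl}(\G)$ requires assigning all remaining coordinates and verifying every running intersection inequality of $\G$; Lemma~\ref{lem:lifting} lifts facet-defining \emph{inequalities}, not points, so it does not do this job, and you acknowledge the bookkeeping without completing it.

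The paper avoids points entirely and argues on the inequalities: it takes the RLT inequality~\eqref{eq:rlt} with $U=C$, namely $\sum_{W\subseteq C,\,|W|\,{\rm odd}}z_W-\sum_{W\subseteq C,\,|W|\,{\rm even}}z_W\leq 0$, which is facet-defining for $\MP(\G_C)$ and hence (by the zero-lifting proposition preceding Proposition~\ref{prop:compare}) facet-defining for $\MP(\G)$. For $|C|\geq 3$ this inequality has at least $\binom{|C|}{2}>1$ edge variables with negative coefficient, whereas no inequality in the description of $\MP^{\rm RI}(\G)$ has that coefficient pattern; a facet-defining inequality cannot be implied by valid inequalities none of which is a positive multiple of it, so it is violated somewhere on $\MP^{\rm RI}(\G)$, giving strictness in one stroke. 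I recommend replacing your strictness argument with this structural one; as written, your proof of strict containment is incomplete.
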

\begin{proof}
    Consider any inequality in the description of $\MP^{\rm RI}(\G)$. Then from the definition of the standard linearization together with Lemmas~\ref{lem:redfl} and~\ref{lem:redrI}, it follows that this inequality is also a valid inequality for the multilinear polytope $\MP(\G_C)$ for some $C \in \C$, and hence is implied by inequalities defining $\MP^{\rm cl}(\G)$. Moreover, $\MP^{\rm cl}(\G)$ is strictly contained in $\MP^{\rm RI}(\G)$ since for example its facet-defining inequality~\eqref{eq:rlt} with $U=C$ is given by
    $$
    \sum_{\substack{W \subseteq C: \\ |W| \;{\rm odd}}}{z_{W}}-\sum_{\substack{W \subseteq C: \\ |W|\; {\rm even}}}{z_{W}} \leq 0,
    $$
    which is not present in  $\MP^{\rm RI}(\G)$, if $|C| \geq 3$. To see this, note that in the above inequality, the cardinality of the set of variables with negative coefficients, \ie $\{W \subseteq C: |W| \;{\rm even}, |W| \geq 2\}$ is lower bounded by $\binom{|C|}{2}$ which is larger than one, if $|C| \geq 3$. However, in a running intersection inequality~\eqref{eq: rie}, there is only one variable $z_e$, for some $e \in \E$ with a negative coefficient.
\end{proof}

Next we provide a sufficient condition under which the clique relaxation coincides with the multilinear polytope. To this we make use of a sufficient condition for decomposability of multilinear sets~\cite{dPKha18MPA}. Let $\G=(V,\E)$ be a hypergraph and let $\G'=(V',\E')$ be a partial hypergraph of $\G$. Then $\G'$ is a \emph{section hypergraph} of $\G$ \emph{induced by} $V'$, if $\E'=\{e \in \E: e \subseteq V'\}$. Given hypergraphs $\G_1 = (V_1,\E_1)$ and
$\G_2 = (V_2,\E_2)$, we denote by $\G_1 \cap \G_2$ the hypergraph $(V_1 \cap V_2, \E_1 \cap \E_2)$, and we denote
by $\G_1 \cup \G_2$, the hypergraph $(V_1\cup V_2,\E_1 \cup \E_2)$. Finally, we say that the multilinear set $S(\G)$ is \emph{decomposable} into $S(\G_1)$ and $S(\G_2)$ if the system comprised of the description of $\MP(\G_1)$ and the description of $\MP(\G_2)$, is the description of $\MP(\G)$.
\begin{theorem} [Theorem~1 in~\cite{dPKha18MPA}]
\label{decomposability}
Let $\G$ be a hypergraph, and let $\G_1, \G_2$ be section hypergraphs of $\G$ such that $\G_1 \cup \G_2=\G$ and $\G_1 \cap \G_2$ is a complete hypergraph. Then the set $S(\G)$ is decomposable into $S(\G_1)$ and $S(\G_2)$.
\end{theorem}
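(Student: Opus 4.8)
Unwinding the definition of decomposability, it suffices to prove the set equality $\MP(\G) = \{z \in \R^{V \cup \E} : z|_{V_1 \cup \E_1} \in \MP(\G_1),\ z|_{V_2 \cup \E_2} \in \MP(\G_2)\}$, where the ambient space makes sense because $\G_1 \cup \G_2 = \G$ forces $V = V_1 \cup V_2$ and $\E = \E_1 \cup \E_2$. The inclusion ``$\subseteq$'' is immediate: since $\G_i$ is a section hypergraph of $\G$, every point of $S(\G)$ restricts to a point of $S(\G_i)$, so any $z \in \MP(\G) = \conv S(\G)$ restricts to a convex combination of points of $S(\G_i)$, i.e.\ to a point of $\MP(\G_i)$. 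The content of the theorem is the reverse inclusion, which I would prove by a gluing argument.

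Fix $z$ with $z^{(1)} := z|_{V_1 \cup \E_1} \in \MP(\G_1)$ and $z^{(2)} := z|_{V_2 \cup \E_2} \in \MP(\G_2)$, and write them as convex combinations $z^{(1)} = \sum_i \lambda_i v^{(1)}_i$ and $z^{(2)} = \sum_j \mu_j v^{(2)}_j$ with $v^{(1)}_i \in S(\G_1)$, $v^{(2)}_j \in S(\G_2)$ and all $\lambda_i, \mu_j > 0$. Each binary point $v^{(1)}_i$ is determined by its node part, hence in particular by its restriction $v^{(1)}_i|_{V_0}$ to $V_0 := V_1 \cap V_2$; for $a \in \{0,1\}^{V_0}$ set $\Lambda_a := \sum_{i:\, v^{(1)}_i|_{V_0}=a} \lambda_i$, and symmetrically $M_a := \sum_{j:\, v^{(2)}_j|_{V_0}=a} \mu_j$.

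The crux is the identity $\Lambda_a = M_a$ for every $a$. Let $C := \G_1 \cap \G_2$, which by hypothesis is a complete hypergraph on $V_0$; since $\G_1$ and $\G_2$ are section hypergraphs, its edge set is exactly $\{e \in \E : e \subseteq V_0\}$, which is contained in both $\E_1$ and $\E_2$. The restrictions of $z^{(1)}$ and of $z^{(2)}$ to the coordinates indexed by $V_0$ and by these edges coincide --- both equal $z$ restricted to those coordinates --- and define a point $z^{(0)} \in \MP(C)$. Now $\MP(C) = \conv S(C)$ is a simplex: it has $2^{|V_0|}$ vertices, namely the binary points, and the square $2^{|V_0|} \times 2^{|V_0|}$ matrix whose rows are these vertices augmented by a constant $1$-coordinate has $(a,S)$-entry $\prod_{v \in S} a_v$ for $S \subseteq V_0$ (the entry for $S = \emptyset$ being the constant $1$), which equals $1$ when $S$ is contained in the support of $a$ and $0$ otherwise; ordering rows and columns compatibly with set inclusion makes this matrix triangular with unit diagonal, hence nonsingular, so the vertices are affinely independent and $z^{(0)}$ has a unique representation as a convex combination of them. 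Grouping the terms of $z^{(1)} = \sum_i \lambda_i v^{(1)}_i$ by the value of $v^{(1)}_i|_{V_0}$ shows that the coefficient in this unique representation attached to the vertex indexed by $a$ is $\Lambda_a$; the same argument applied to $z^{(2)}$ gives $M_a$; therefore $\Lambda_a = M_a$.

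Finally I would couple the two representations. For each $a$ with $\Lambda_a > 0$ and each pair $(i,j)$ with $v^{(1)}_i|_{V_0} = v^{(2)}_j|_{V_0} = a$, define $w_{ij} \in \R^{V \cup \E}$ to agree with $v^{(1)}_i$ on $V_1 \cup \E_1$ and with $v^{(2)}_j$ on $V_2 \cup \E_2$; the two prescriptions are consistent on the overlap $V_0 \cup \{e \in \E : e \subseteq V_0\}$, since both give $a$ on $V_0$ and hence the value $\prod_{v \in e} a_v$ on every edge $e \subseteq V_0$. Then $w_{ij} \in S(\G)$: its node part is binary, and each $e \in \E = \E_1 \cup \E_2$ satisfies $e \subseteq V_1$ or $e \subseteq V_2$ by the section hypergraph property, so the defining relation $z_e = \prod_{v \in e} z_v$ is inherited from $v^{(1)}_i$ or from $v^{(2)}_j$. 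Put $\tilde z := \sum_{a:\, \Lambda_a > 0} \sum_{(i,j)} \frac{\lambda_i \mu_j}{\Lambda_a}\, w_{ij}$; the coefficients are nonnegative and sum to $\sum_a \frac{\Lambda_a M_a}{\Lambda_a} = \sum_a \Lambda_a = 1$. Restricting $\tilde z$ to $V_1 \cup \E_1$ and summing over $j$, using $\sum_{j:\, v^{(2)}_j|_{V_0}=a} \mu_j = M_a = \Lambda_a$, returns $\sum_i \lambda_i v^{(1)}_i = z^{(1)}$; symmetrically its restriction to $V_2 \cup \E_2$ returns $z^{(2)}$; and since $(V_1 \cup \E_1) \cup (V_2 \cup \E_2) = V \cup \E$, this forces $\tilde z = z$, so $z \in \conv S(\G) = \MP(\G)$. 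I expect the simplex/uniqueness step that pins down $\Lambda_a = M_a$ to be the one genuine obstacle; once that identity is secured, the coupling is routine bookkeeping.
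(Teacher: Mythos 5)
Your argument is correct and complete. Note, however, that this paper does not prove the statement at all: Theorem~\ref{decomposability} is imported verbatim as Theorem~1 of~\cite{dPKha18MPA}, so there is no in-paper proof to compare against. Judged on its own, your proof is sound and is essentially the standard gluing argument for such decomposition results. The reduction of decomposability to the set equality $\MP(\G) = \{z : z|_{V_1\cup\E_1}\in\MP(\G_1),\ z|_{V_2\cup\E_2}\in\MP(\G_2)\}$ is the right reading of the definition; the forward inclusion by projection is immediate; and you correctly isolate the crux, namely that the two convex representations induce the same marginal weights $\Lambda_a = M_a$ on $\{0,1\}^{V_0}$. Your justification of that step is exactly where the completeness hypothesis on $\G_1\cap\G_2$ enters: because every subset of $V_0$ of cardinality at least two is an edge lying in $\E_1\cap\E_2$, the restriction of $z$ to those coordinates lives in $\MP$ of a complete hypergraph, whose $2^{|V_0|}$ vertices are affinely independent (your unitriangular zeta-matrix argument is the clean way to see this), so barycentric coordinates are unique and must agree. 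The coupling $w_{ij}$ and the product measure $\lambda_i\mu_j/\Lambda_a$ then verify membership of the glued points in $S(\G)$ (here the section-hypergraph property is what guarantees each $e\in\E$ lies entirely in $V_1$ or $V_2$) and reproduce both marginals, forcing $\tilde z = z$. This is the same mechanism underlying the proof in the cited reference; you have lost nothing by reconstructing it blind.
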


\begin{prop}\label{prop:charac}
     Let $\G = \cup_{C \in \C}{\G_C}$ be a UGM hypergraph where $\C$ denotes the set of maximal clique of the binary UGM and $\G_C$ is a complete hypergraph with node set $C$. If $\C$ has the running intersection property, then $\MP^{\rm cl}(\G)=\MP(\G)$ .
\end{prop}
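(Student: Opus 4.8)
The plan is to prove $\MP^{\rm cl}(\G) = \MP(\G)$ by induction on $|\C|$, using Theorem~\ref{decomposability} to peel off one maximal clique at a time along a running intersection ordering of $\C$. First I would order the maximal cliques as $C_1, C_2, \dots, C_m$ so that this is a running intersection ordering of $\C$, i.e., for each $k \geq 2$ there is $j < k$ with $C_k \cap (\bigcup_{i<k} C_i) \subseteq C_j$. The base case $m=1$ is immediate since then $\G = \G_{C_1}$ is itself a complete hypergraph and $\MP^{\rm cl}(\G) = \MP(\G_{C_1}) = \MP(\G)$ by definition.

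For the inductive step, let $\G^{(k)} := \bigcup_{i \le k} \G_{C_i}$ and suppose the claim holds for $\G^{(k-1)}$, namely $\MP^{\rm cl}(\G^{(k-1)}) = \MP(\G^{(k-1)})$. I would set $\G_1 := \G^{(k-1)}$ and $\G_2 := \G_{C_k}$, so that $\G_1 \cup \G_2 = \G^{(k)}$. The key geometric facts to verify are: (a) both $\G_1$ and $\G_2$ are section hypergraphs of $\G^{(k)}$ — for $\G_2 = \G_{C_k}$ this holds because $\G_{C_k}$ is a complete hypergraph and any edge of $\G^{(k)}$ contained in $C_k$ must lie in $\bar P(C_k)$ (here I would use that each $C_i$ is a \emph{maximal} clique, so no $C_i$ with $i < k$ is contained in $C_k$, which ensures no ``extra'' edges sneak into the section); for $\G_1$ an analogous argument applies; and (b) $\G_1 \cap \G_2$ is a complete hypergraph. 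For (b), observe that the node set of $\G_1 \cap \G_2$ is $C_k \cap (\bigcup_{i<k} C_i)$, which by the running intersection ordering is contained in some $C_j$ with $j < k$; since it is also a subset of $C_k$, every subset of it of cardinality $\geq 2$ is an edge of both $\G_{C_j}$ and $\G_{C_k}$, hence of $\G_1 \cap \G_2$, so $\G_1 \cap \G_2$ is complete. Theorem~\ref{decomposability} then gives that $S(\G^{(k)})$ is decomposable into $S(\G_1)$ and $S(\G_2)$, i.e., $\MP(\G^{(k)})$ is described by the union of the descriptions of $\MP(\G_1) = \MP^{\rm cl}(\G^{(k-1)})$ and $\MP(\G_2) = \MP(\G_{C_k})$; but that union is exactly the description of $\MP^{\rm cl}(\G^{(k)})$, completing the induction. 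Taking $k = m$ yields $\MP^{\rm cl}(\G) = \MP(\G)$.

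I expect the main obstacle to be the careful bookkeeping in step (a): confirming that $\G_{C_k}$ and $\G^{(k-1)}$ really are \emph{section} hypergraphs of $\G^{(k)}$ and not merely partial hypergraphs. The subtlety is that the definition of a section hypergraph induced by $V'$ requires \emph{all} edges of $\G$ contained in $V'$ to be present; one must rule out the possibility that some edge $e \in \bar P(C_i)$ with $i \neq k$ happens to satisfy $e \subseteq C_k$ yet is not in $\bar P(C_k)$ — but since $\bar P(C_k)$ contains every subset of $C_k$ of size between $2$ and $|C_k|-1$, and edges of size $|C_k|$ or of size $1$ are excluded from both $\bar P(C_i)$ and $\bar P(C_k)$ by the definition of $\bar P$, this works out, though it deserves an explicit sentence. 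A secondary point worth stating cleanly is that ``the description of $\MP^{\rm cl}$'' is literally the concatenation of the per-clique RLT descriptions from Proposition~\ref{prop:RLT}, so that matching it against the decomposition output of Theorem~\ref{decomposability} is a purely formal identification once (a) and (b) are in hand.
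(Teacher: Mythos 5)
Your proof is correct and follows essentially the same route as the paper's: take a running intersection ordering of $\C$ and recursively apply Theorem~\ref{decomposability}, using the running intersection property to certify that the overlap of the next clique with the union of the previous ones induces a complete (section) hypergraph; you simply make explicit the section-hypergraph verifications that the paper leaves implicit. The only nit is your parenthetical claim that edges of size $|C_k|$ are excluded from $\bar P(C_k)$ --- in the paper's convention $\bar P(C)$ consists of \emph{all} subsets of $C$ of cardinality at least two, including $C$ itself --- but this only makes your section-hypergraph check easier and does not affect the argument.
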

\begin{proof}
    Denote by $C_1, C_2, \cdots, C_m$ a running intersection ordering of $\C$. Let $\G_i$ denote the complete hypergraph with node set $C_i$ for all $i \in \{1,\cdots,m\}$. Define $\G'_m=\cup_{i=1}^{m-1}{\G_{i}}$.
    Then by definition of the running intersection ordering, $\G_m \cap \G'_m$ is a section hypergraph of $\G_i$
    for some $i \in \{1,\cdots,m-1\}$ and hence is a complete hypergraph. Hence, by Theorem~\ref{decomposability}
    the set $S(\G)$ is decomposable into $S(\G_m)$ and $S(\G'_m)$. By a recursive application of this argument $m$ times we conclude that $S(\G)$ is decomposable into multilinear sets $S(\G_i)$, $i \in \{1,\cdots,m\}$, implying $\MP^{\rm cl}(\G)=\MP(\G)$.
\end{proof}

\subsection{The multi-clique relaxation}

By Proposition~\ref{prop:compare}, the clique relaxation is the strongest LP relaxation introduced so far. Yet by Proposition~\ref{prop:charac}, this LP is guaranteed to solve the original problem if the set of cliques $\C$ has the running intersection property; a property that is often~\emph{not} present in applications. In this section we propose stronger LP relaxations for Problem~\ref{map6}
by constructing the multilinear polytope of a UGM hypergraph containing multiple cliques that do not have the running intersection property. More precisely, we consider a special structure, that we refer to as the \emph{lifted cycle of cliques}, and obtain the multilinear polytope using disjunctive programming~\cite{balas1998disjunctive}. This structure appears in applications such as image restoration and decoding problems.

%\begin{definition} [Cycle of cliques]
%\label{def:cycle of cliques}
 Consider the set of maximal cliques $\C:= \{C_1, C_2, \ldots, C_m\}$, where $m \geq 3$ and $|C_i| \geq 3$ for all $i \in [m]:=\{1,\cdots,m\}$. We say that $\C$ is a \emph{lifted cycle of cliques} if $C_i \cap C_{i+1}=\{v_i, \bar v\}$ for all $i \in [m]$,
 where $v_i \neq v_j$ for any $i \neq j \in [m]$ and where we define $C_{m+1} := C_1$.
%\end{definition}
It is simple to check if $\C$ is a lifted cycle of cliques, then it does not have the running intersection property. Figure~\ref{fig:cyclesofcliques} illustrates examples of cycles of cliques that appear in applications.
The objective of this section is to characterize $\MP(\G_{\C})$, where $\C$
is a lifted cycle of cliques. To this end, next we introduce a lifting operation for the multilinear polytope which is the key to our characterization.
For notational simplicity, for a node $v$, we use the notations $z_v$ (resp. $z_{v\cup e}$ for some $e \in \E$) and $z_{\{v\}}$ (resp. $z_{\{v\}\cup e}$ for some $e \in \E$), interchangeably.

\begin{figure}
    \centering
  \begin{subfigure}[t]{.45\textwidth}
    \centering
    \begin{tikzpicture}[scale=0.8]
      \tikzset{every node/.style={circle, draw=black, fill=blue!20, inner sep=2pt}}
      \node[] (1) at (0, 0) {};
      \node[] (2) at (-0.75, -1.275) {};
      \node[] (3) at (-0.75, 1.275) {};
      \node[] (4) at (1.5, 0) {};
      \node[] (5) at (-1.5, 0) {};
      \node[] (6) at (0.75, -1.275) {};
      \node[] (7) at (0.75, 1.275) {};

      \draw[line width=2pt, draw=lightgreen] (-0.75,0) ellipse (1.0cm and 1.6cm);
      \draw[line width=2pt, draw=lightgreen, rotate=-30] (0.075,0.75) ellipse (1.875cm and 0.975cm);
      \draw[line width=2pt, draw=lightgreen, rotate=30] (0.075,-0.75) ellipse (1.875cm and 0.975cm);
    \end{tikzpicture}
    \caption{}
  \end{subfigure}%
  \hspace{0.5cm}
    \begin{subfigure}[t]{.45\textwidth}
    \centering
        \begin{tikzpicture}[scale=1.1]
          \foreach \i in {0,1,2} {
            \foreach \j in {0,1,2} {
              \node[circle, draw = black, fill=blue!20, inner sep=2pt] (n\i\j) at (\i,\j) {};
            }
          }
          \foreach \i in {0,1} {
            \foreach \j in {0,1} {
              \draw[rounded corners=28pt, line width=2pt, draw=lightgreen] (\i-0.4,\j-0.4) rectangle (\the\numexpr\i+1\relax+0.4,\the\numexpr\j+1\relax+0.4);
            }
          }
        \end{tikzpicture}
        \caption{}
    \end{subfigure}
   \caption{Examples of lifted cycle of cliques of length $m=3$ and $m=4$.}
\label{fig:cyclesofcliques}
\end{figure}
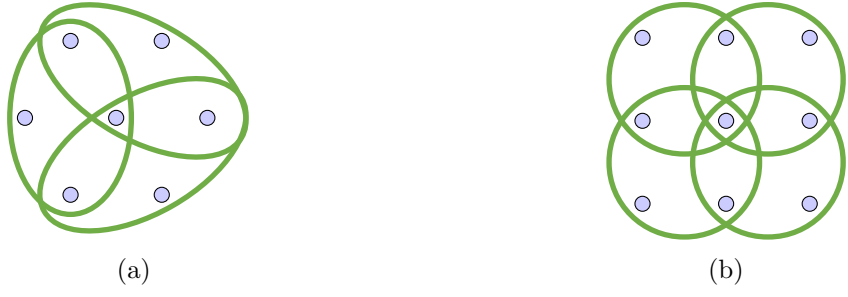

\begin{prop}
\label{prop_fix}
Let $\G'=(V',\E')$ be a hypergraph and let $\bar v \notin V'$. Define the hypergraph $\G$
with the node set $V:= V' \cup\{\bar v\}$ and the edge set $\E:=\{e\cup\{\bar v\}: e \in \E'\}$. Suppose that $\MP(\G')$ is defined by:
\begin{equation}\label{base}
\sum_{v\in V^{'}}a^i_{v}z_{v}+\sum_{e\in \E^{'}}a^i_{e}z_{e}\leq \alpha_i, \quad \forall i \in I.
\end{equation}
Then $\MP(\G)$ is defined by the following inequalities:
\begin{align}
\label{eq:substitution}
\begin{split}
& 0 \leq z_{\bar v} \leq 1\\
&\sum_{v\in V'} {a^i_v z_{\{v, \bar v\}}}+\sum_{e\in \E'} {a^i_e z_{e \cup \{\bar v\}}}\leq \alpha_i z_{\bar v}, \quad \forall i \in I,\\
&\sum_{v\in V'} {a^i_{v}(z_{v}-z_{\{v,\bar v\}})}+\sum_{e\in \E^{'}}  {a^i_{e}(z_{e}-z_{e\cup\{\bar v\}})}\leq \alpha_i(1-z_{\bar v}), \quad \forall i \in I.
\end{split}
\end{align}
\end{prop}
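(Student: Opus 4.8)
The plan is to prove Proposition~\ref{prop_fix} by a disjunctive programming argument on the binary variable $z_{\bar v}$. Observe that $S(\G)$ is obtained from $S(\G')$ by adjoining the new node $\bar v$ and replacing every edge $e \in \E'$ by $e \cup \{\bar v\}$. Since $z_{\bar v} \in \{0,1\}$, the set $S(\G)$ splits into two pieces. On the piece where $z_{\bar v} = 1$, each product constraint reads $z_{e \cup \{\bar v\}} = z_{\bar v}\prod_{v \in e} z_v = \prod_{v \in e} z_v$, so the projection onto the $z_v, z_e$ coordinates (renaming $z_{e \cup \{\bar v\}}$ as $z_e$) is exactly $S(\G')$; on the piece where $z_{\bar v} = 0$, every $z_{e \cup \{\bar v\}} = 0$, and the variables $z_v$, $v \in V'$, range freely over $\{0,1\}$ while the old edge variables $z_e$ are unconstrained by $\bar v$ but still must satisfy $z_e = \prod_{v\in e}z_v$ — wait, more carefully: in $S(\G)$ the only constraints present are $z_{e\cup\{\bar v\}} = \prod_{v\in e}z_v \cdot z_{\bar v}$, so when $z_{\bar v}=0$ the variables $z_v$ are free binary points and there are no old edge variables $z_e$ at all; the variables carried by $\G$ are $z_v$ for $v \in V'\cup\{\bar v\}$ and $z_{e\cup\{\bar v\}}$ for $e \in \E'$. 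Thus I should set up the two coordinate systems carefully before invoking disjunction.

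Concretely, I would argue as follows. First, write $\MP(\G) = \conv(S(\G))$ and decompose $S(\G) = S_1 \cup S_0$ according to $z_{\bar v} = 1$ or $z_{\bar v} = 0$. For $S_1$: the map sending a point of $S(\G')$ to the point of $S(\G)$ with $z_{\bar v}=1$, $z_{\{v,\bar v\}} := z_v$, $z_{e\cup\{\bar v\}} := z_e$ is a bijection onto $S_1$, so $\conv(S_1)$ is the image of $\MP(\G')$ under the corresponding affine map, i.e.\ it is cut out by $z_{\bar v}=1$ together with the substituted inequalities $\sum_v a^i_v z_{\{v,\bar v\}} + \sum_e a^i_e z_{e\cup\{\bar v\}} \le \alpha_i$. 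For $S_0$: here $z_{\bar v}=0$ and every $z_{e\cup\{\bar v\}} = 0$, while $(z_v)_{v\in V'}$ is an arbitrary binary vector; hence $\conv(S_0)$ is cut out by $z_{\bar v}=0$, $z_{e\cup\{\bar v\}}=0$ for all $e$, and $0 \le z_v \le 1$ for all $v \in V'$. Then apply the standard disjunctive-programming description of $\conv(S_1 \cup S_0)$ (as in Balas~\cite{balas1998disjunctive}): introduce the convex combination $z = \lambda z^{(1)} + (1-\lambda) z^{(0)}$ with $\lambda = z_{\bar v}$, so $z^{(1)}_{\{v,\bar v\}}$ becomes $z_{\{v,\bar v\}}/z_{\bar v}$ etc., and clearing denominators produces exactly the two families of scaled inequalities in~\eqref{eq:substitution}: multiplying the $S_1$ inequalities by $z_{\bar v}$ gives $\sum_v a^i_v z_{\{v,\bar v\}} + \sum_e a^i_e z_{e\cup\{\bar v\}} \le \alpha_i z_{\bar v}$, and multiplying the $S_0$ description by $1-z_{\bar v}$ — after using $z^{(0)}_v = (z_v - z_{\{v,\bar v\}})/(1-z_{\bar v})$ and $z^{(0)}_{e\cup\{\bar v\}} = (z_{e\cup\{\bar v\}} - z_{e\cup\{\bar v\}})/(1-z_{\bar v})$ — gives the third family; the bounds $0 \le z_v$ and $z_v \le 1$ on the $S_0$ side reappear as the already-present facet inequalities $\psi_{\{v\}}, \psi_\emptyset \ge 0$ of $\MP(\G')$ (with $U$ a singleton), so they need not be listed separately.

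The main obstacle, and the step I would spend the most care on, is verifying that the naive disjunctive description is \emph{exact} here rather than merely valid — i.e.\ that no extra ``closure'' inequalities are needed and that the projection step is clean. Two points require attention: (a) one must check that the recession directions of $\conv(S_1)$ and $\conv(S_0)$ agree (both are bounded polytopes, so this is automatic, which is why $\lambda$ ranging in $[0,1]$ suffices and the projection of the lifted polytope is closed); and (b) one must confirm that the $S_0$ side really is the full box $\{0\le z_v \le 1\}$ with all lifted edge variables zero — this is where the hypothesis $\bar v \notin V'$ and the specific edge structure $\E = \{e \cup \{\bar v\}\}$ matter, since it guarantees there are no edges of $\G$ lying entirely in $V'$ that would further constrain the $z_v$ when $z_{\bar v}=0$. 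Once these are pinned down, the equivalence of the disjunctive system with~\eqref{eq:substitution} is a routine rearrangement: every inequality in~\eqref{eq:substitution} is manifestly valid for both $S_1$ (trivially, or by scaling) and $S_0$, hence for $S(\G)$, giving ``$\supseteq$''; and the disjunctive theorem gives ``$\subseteq$''. I would also note the sanity check that setting $z_{\bar v} \in \{0,1\}$ in~\eqref{eq:substitution} recovers $S_1$ and $S_0$ respectively, confirming no over-tightening.
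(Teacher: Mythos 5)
Your overall route -- disjoining on $z_{\bar v}\in\{0,1\}$, describing the convex hull of each branch, and then invoking Balas' union-of-polytopes formulation and projecting out the auxiliary variables -- is exactly the paper's proof. The gap is in your description of the $z_{\bar v}=0$ branch. You resolve the ``wait, more carefully'' digression by concluding that when $z_{\bar v}=0$ the variables $z_v$, $v\in V'$, are a free binary box and that ``there are no old edge variables $z_e$ at all,'' so that $\conv(S_0)$ is just $\{z_{\bar v}=0,\ z_{e\cup\{\bar v\}}=0,\ 0\le z_v\le 1\}$. That reading cannot be right, because the conclusion~\eqref{eq:substitution} you are trying to prove explicitly contains the variables $z_e$, $e\in\E'$ (third family); the intended hypergraph $\G$ (as used in the application to Proposition~\ref{ccc}, where $\G_{C_i}$ is the \emph{complete} hypergraph on $C_i\ni\bar v$) retains all of $V'$, $\E'$ and adds $\{p\cup\{\bar v\}: p\in V'\cup\E'\}$. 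Consequently the face $z_{\bar v}=0$ is \emph{not} a box: the constraints $z_e=\prod_{v\in e}z_v$ for $e\in\E'$ survive there, so that face is $\MP(\G')$ in the variables $(z_p)_{p\in V'\cup\E'}$ intersected with $\{z_{\bar v}=0,\ z_{p\cup\{\bar v\}}=0\}$. This is precisely what the paper uses, and it is the only source of the third family of inequalities: in Balas' system the $0$-branch carries the inequalities $\sum_v a^i_v z^1_v+\sum_e a^i_e z^1_e\le\alpha_i\lambda$, and the substitutions $z^1_p=z_p-z_{p\cup\{\bar v\}}$, $\lambda=1-z_{\bar v}$ turn these into $\sum_v a^i_v(z_v-z_{\{v,\bar v\}})+\sum_e a^i_e(z_e-z_{e\cup\{\bar v\}})\le\alpha_i(1-z_{\bar v})$.

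Your own writeup is internally inconsistent on this point: having declared $S_0$ a box with no $z_e$ variables, you nonetheless assert that ``multiplying the $S_0$ description by $1-z_{\bar v}$ \dots gives the third family,'' which it cannot -- a box description scaled by $1-z_{\bar v}$ yields only $0\le z_v-z_{\{v,\bar v\}}\le 1-z_{\bar v}$, and the quantity $z_{e\cup\{\bar v\}}-z_{e\cup\{\bar v\}}$ appearing in your substitution is identically zero, which is a symptom of having lost the variables $z_e$. To repair the proof, fix the variable space of $\G$ so that $V'\cup\E'$ persists, describe $\MP(\G_0)$ and $\MP(\G_1)$ as the paper does (inequalities~\eqref{base} in the original variables with the lifted variables set to $0$, respectively identified with the originals), and then your Balas-plus-projection computation goes through verbatim. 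The points you flag as delicate -- boundedness of the two pieces and closedness of the projection -- are fine and are indeed why no closure step is needed.
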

\begin{proof}
Denote by $\MP(\G_0)$ (resp. $\MP(\G_1)$) the face of $\MP(\G)$ with $z_{\bar v} = 0$ (resp. $z_{\bar v} = 1$).
We then have:
$$\MP(\G) = \conv(\MP(\G_0) \cup \MP(\G_1)).$$
Since $\MP(\G')$ is defined by inequalities~\eqref{base}, it follows that $\MP(\G_0)$ is given by inequalities~\eqref{base} together with
$z_{\bar v} = 0$, $z_{p \cup \{\bar v\}} = 0$ for all $p \in V'\cup \E'$,
while $\MP(\G_1)$ is given by inequalities~\eqref{base} together with:
$z_{\bar v} = 1$, $z_{p \cup \{\bar v\}} = z_p$, for all $p \in V'\cup \E'$.
Using Balas’ formulation for the union of polytopes~\cite{balas1998disjunctive}, we deduce that $\MP(\G)$ is the projection onto the space of the $z$ variables of the polytope defined by the following system:
\begin{align}
&\sum_{v\in V'}{a^i_{v}z_{v}^{1}}+\sum_{e\in \E'}{a^i_{e}z_{e}^{1}}\leq \alpha_i\lambda,\quad \forall i \in I\label{ineq1}\\
& z^1_{\bar v} = 0, \quad z^1_{p \cup \{\bar v\}} = 0, \quad \forall p \in V'\cup \E' \nonumber\\
&\sum_{v\in V'}{a^i_{v}z_{v}^{2}}+\sum_{e\in \E'}{a^i_{e}z_{e}^{2}}\leq \alpha_i(1-\lambda),\quad \forall i \in I\label{ineq2}\\
&z^2_{\bar v} = 1-\lambda, \quad z^2_{p \cup \{\bar v\}} = z^2_p, \quad  p \in V'\cup \E' \nonumber\\
& z_p = z^1_p + z^2_p  \quad \forall p \in V \cup \E \nonumber\\
&0\leq \lambda\leq 1. \label{ineq3}
\end{align}
To complete the proof, we should project out variables $z^1, z^2, \lambda$ from the above system.
From $z_{\bar v} = z^1_{\bar v} + z^2_{\bar v}$, $z^1_{\bar v} = 0$, and $z^2_{\bar v} = 1-\lambda$, we get
\begin{equation}\label{firstp}
\lambda=1-z_{\bar v}.
\end{equation}
For each $p \in V' \cup \E'$, we have $z_{p \cup \{\bar v\}} = z^1_{p \cup \{\bar v\}} + z^2_{p \cup \{\bar v\}} = 0 + z^2_p$, implying that
\begin{equation}\label{secondp}
z^2_p = z_{p \cup \{\bar v\}}, \quad \forall p \in V' \cup \E'.
\end{equation}
For each $p \in V' \cup \E'$ we have $z_p = z^1_p + z^2_p$; combining this with~\eqref{secondp}, we obtain
\begin{equation}\label{thirdp}
z^1_p = z_p-z_{p \cup \{\bar v\}}, \quad \forall p \in V' \cup \E'.
\end{equation}
Substituting~\eqref{firstp}-\eqref{thirdp} in~\eqref{ineq1}-\eqref{ineq3}, we obtain system~\eqref{eq:substitution} and this completes the proof.
\end{proof}

Odd-cycle inequalities are a well-known class of valid inequalities for the Boolean quadric polytope~\cite{Pad89}. These inequalities play an important role in characterizing the multilinear polytope of a lifted cycle of cliques. We define these inequalities next;
let $G=(V,E)$ be a graph. Padberg~\cite{Pad89} introduced the Boolean quadric polytope of $G$, denoted by ${\rm BQP}(G)$, as follows:
$$
{\rm BQP}(G) := \conv\Big\{z\in \{0,1\}^{V\cup E}: z_e = z_u z_v, \; \forall \{u,v\} \in E \Big\}.
$$
Let $K$ be a cycle of $G$. We denote by $V(K)$ the nodes of the cycle. Let $D \subseteq K$ such that $|D|$ is odd.
Define $V_1(D) := (\cup_{e \in D}{e}) \setminus (\cup_{e \in K \setminus D}{e})$ and $V_2(D) :=V(K) \setminus (\cup_{e \in D}{e})$. Then an \emph{odd-cycle inequality} for ${\rm BQP}(G)$ is given by:
\begin{equation}\label{oddCycle}
\sum_{v \in V_1(D)}{z_v}-\sum_{v \in V_2(D)}{z_v}-\sum_{e \in D}{z_e}+\sum_{K \setminus D}{z_e} \leq \Big\lfloor\frac{|D|}{2}\Big\rfloor.
\end{equation}
Padberg proved that if the graph $G$ consists of a chordless cycle $K$, then the polytope obtained by adding all odd-cycle inequalities of $K$ to the standard linearization coincides with the Boolean quadric polytope ${\rm BQP}(G)$ (see Theorem~9 in~\cite{Pad89}).
%We should also remark that the separation problem over odd-cycle inequalities can be solved in polynomial time~\cite{BarMah86}.

\medskip
We are now ready to state the main result of this section.

\begin{prop}\label{ccc}
    Let $\C=\{C_1, \cdots, C_m\}$ be a lifted cycle of cliques
    with $C_i \cap C_{i+1}=\{v_i, \bar v\}$ for $i \in [m]$, where $C_{m+1}:= C_1$. Define the cycle $K :=\{\{v_{1}, v_{i+1}\}, \forall i \in [m]\}$, where $v_{m+1}:= v_1$.
    Denote by $\G_{C_i}$, $i \in [m]$ the complete hypergraph
    with node set $C_i$ and let $\G_{\C} = \cup_{i \in [m]} {\G_{C_i}}$.
    Then $\MP(\G_{\C})$ is defined by the system comprising of inequalities defining $\MP(\G_{C_i})$ for $i \in [m]$, and the following inequalities:
    \begin{align}\label{liftedoddCycle}
& \sum_{v \in V_1(D)}{z_{\{v,\bar v\}}}-\sum_{v \in V_2(D)}{z_{\{v,\bar v\}}}-\sum_{e \in D}{z_{e\cup \{\bar v\}}}+\sum_{K \setminus D}{z_{e \cup \{\bar v\}}} \leq \Big\lfloor\frac{|D|}{2}\Big\rfloor z_{\bar v},\nonumber\\
& \sum_{v \in V_1(D)}{(z_v-z_{\{v,\bar v\}})}-\sum_{v \in V_2(D)}{(z_v-z_{\{v,\bar v\}})}-\sum_{e \in D}{(z_e-z_{e \cup \{\bar v\}})}+\sum_{K \setminus D}{(z_e-z_{e \cup \{\bar v\}})} \leq \Big\lfloor\frac{|D|}{2}\Big\rfloor (1-z_{\bar v}),\nonumber\\
& \qquad \qquad \qquad \qquad \qquad \forall D \subseteq K: |D| \; {\rm is \; odd},
\end{align}
where for each $D$ we define $V_1(D) := (\cup_{e \in D}{e}) \setminus (\cup_{e \in K \setminus D}{e})$ and $V_2(D) :=V(K) \setminus (\cup_{e \in D}{e})$, and as before $V(K)$ denotes the node set of the cycle $K$.
\end{prop}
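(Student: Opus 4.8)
\noindent\emph{Plan.} The plan is to delete the ``hub'' node $\bar v$, describe the resulting $\bar v$-free multilinear polytope, and then reinstate $\bar v$ via the lifting operation of Proposition~\ref{prop_fix}. Write $C_i':=C_i\setminus\{\bar v\}$ and $V':=V\setminus\{\bar v\}$; by the definition of a lifted cycle of cliques, $C_i'\cap C_{i+1}'=\{v_i\}$ while $C_i'\cap C_j'=\emptyset$ whenever $i,j$ are not consecutive modulo $m$. Let $\G'$ be the section hypergraph of $\G_{\C}$ induced by $V'$; one checks immediately that $\G'=\bigcup_{i\in[m]}\G_{C_i'}$, where $\G_{C_i'}$ is the complete hypergraph on $C_i'$, and that the cycle $K$ is a partial hypergraph of $\G'$, since each of its edges $\{v_{i-1},v_i\}$ lies in $C_i'$.

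\noindent\emph{Step 1 (the $\bar v$-free polytope).} First I would show that $\MP(\G')$ is cut out by the inequalities defining $\MP(\G_{C_i'})$, $i\in[m]$, together with all odd-cycle inequalities~\eqref{oddCycle} of $K$. Fixing $i$ and letting $D_i:=C_i'\setminus\{v_{i-1},v_i\}$ denote the private nodes of $C_i'$, one verifies that $\G_{C_i'}$ is the section hypergraph induced by $C_i'$, that $\G_{C_i'}$ and the section hypergraph induced by $V'\setminus D_i$ together cover $\G'$, and that their intersection is the complete hypergraph on $\{v_{i-1},v_i\}$ (a single edge with its two endpoints). Theorem~\ref{decomposability} then splits off $S(\G_{C_i'})$; iterating — at each stage the leftover hypergraph is $K$ together with the not-yet-removed complete hypergraphs, so the same check applies and $K$ is never destroyed — we peel off $\G_{C_m'},\dots,\G_{C_1'}$ in turn and conclude that $S(\G')$ decomposes into $S(\G_{C_1'}),\dots,S(\G_{C_m'})$ and $S(K)$. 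Hence $\MP(\G')$ is described by the systems of $\MP(\G_{C_i'})$, $i\in[m]$, and of $\MP(K)$. Since $K$ is a chordless cycle, Padberg's theorem (Theorem~9 in~\cite{Pad89}) gives $\MP(K)$ as the standard linearization of $K$ plus its odd-cycle inequalities~\eqref{oddCycle}; and the standard-linearization inequalities of an edge $\{v_k,v_{k+1}\}$ of $K$ are valid for $\MP(\G_{C_{k+1}'})$ (as $\{v_k,v_{k+1}\}\subseteq C_{k+1}'$), hence redundant, leaving exactly the claimed description.

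\noindent\emph{Step 2 (reinstating $\bar v$).} Because $C_i=C_i'\cup\{\bar v\}$ for every $i$, the hypergraph $\G_{\C}$ matches the construction of Proposition~\ref{prop_fix} applied to $\G'$: node set $V'\cup\{\bar v\}$, and edges the edges of $\G'$ together with $\{v,\bar v\}$ for $v\in V'$ and $e\cup\{\bar v\}$ for each edge $e$ of $\G'$. Applying Proposition~\ref{prop_fix} to the description of $\MP(\G')$ from Step~1, $\MP(\G_{\C})$ is defined by $0\le z_{\bar v}\le 1$ together with the two split inequalities~\eqref{eq:substitution} of each inequality in that description. A term-by-term comparison with the RLT formula~\eqref{defRLT} (using $z_{\{\bar v\}}=z_{\bar v}$ in place of the constant $z_\emptyset=1$) shows that the two splits of $\psi_U(z_{C_i'})\ge 0$ are precisely $\psi_U(z_{C_i})\ge 0$ and $\psi_{U\cup\{\bar v\}}(z_{C_i})\ge 0$; letting $U$ range over all subsets of $C_i'$ recovers every RLT inequality of $\MP(\G_{C_i})$, i.e.\ the full description of $\MP(\G_{C_i})$. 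Likewise, the two splits of the odd-cycle inequality~\eqref{oddCycle} for an odd set $D\subseteq K$ are exactly the two inequalities of~\eqref{liftedoddCycle} for $D$. Finally $0\le z_{\bar v}\le 1$ holds on $\MP(\G_{C_i})$ (which lies in the unit cube), hence is implied by its RLT inequalities and can be discarded, yielding the stated description of $\MP(\G_{\C})$.

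\noindent\emph{Main obstacle.} The crux is Step~1: confirming, at every stage of the recursive peeling, that the only shared edge of size at least two between $\G_{C_i'}$ and the rest of the current hypergraph is $\{v_{i-1},v_i\}$, so that the completeness hypothesis of Theorem~\ref{decomposability} is met — this is exactly where one uses that non-consecutive cliques meet only in $\bar v$ and that each $v_i$ lies in only two (consecutive) cliques — together with checking that the cycle $K$ persists throughout so the recursion terminates at $\MP(K)$ and Padberg's theorem can be invoked. The identifications in Step~2 are routine symbol pushing in~\eqref{defRLT} and~\eqref{eq:substitution}.
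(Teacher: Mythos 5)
Your proposal is correct and follows essentially the same route as the paper: delete the hub node $\bar v$, use Theorem~\ref{decomposability} to peel off the complete hypergraphs $\G_{C_i'}$ one at a time (each time intersecting the rest in the complete hypergraph on a single pair $\{v_{i-1},v_i\}$) until only the chordless cycle $K$ remains, invoke Padberg's theorem for $\MP(K)$, absorb the standard-linearization inequalities of $K$ into the $\MP(\G_{C_i'})$, and then reinstate $\bar v$ via the disjunctive lifting of Proposition~\ref{prop_fix}. Your Step~2 is in fact slightly more explicit than the paper's (which simply asserts that the lifted description equals the $\MP(\G_{C_i})$ plus the lifted odd-cycle inequalities), but the argument is the same.
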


\begin{proof}
    Define $C'_i := C_i \setminus \{\bar v\}$ for all $i \in [m]$. Denote by $\G_{C'_i}$, the complete hypergraph
    with node set $C'_i$ for $i \in [m]$ and define $\G_{\C'} = \cup_{i\in [m]}{\G_{C'_i}}$. By Proposition~\ref{prop_fix}, to characterize $\MP(\G_{\C})$ it suffices to characterize $\MP(\G_{\C'})$. We next obtain the explicit description for $\MP(\G_{\C'})$.

    The hypergraph $\G_{C'_1}$ is the section hypergraph of $\G_{\C'}$ induced by $C'_1$. Denote by $\G_2$ the section hypergraph $\G_{\C'}$ induced by $\cup_{i =2}^m{C'_i}$. Notice that $\G_2 = \cup_{i=2}^m {\G_{C'_i}} \cup F_m$, where $F_m$ is the graph with node set $\{v_1,v_m\}$ and edge set $\{\{v_1, v_m\}\}$.
    We then have $\G_{\C'} = \G_{C'_1} \cup \G_2$ and
    $\G_{C'_1} \cap \G_2 = \{v_1, v_m\}$. Therefore, by Theorem~\ref{decomposability} the multilinear set $S(\G_{\C'})$ decomposes into the multilinear sets
    $S(\G_{C'_1})$ and $S(\G_2)$. Next consider the hypergraph $\G_2$; denote by $\G_3$ the section hypergraph of $\G_2$ induced by $(\cup_{i =3}^m{C'_i}) \cup \{v_1\}$. Notice that $\G_3= \cup_{i=3}^m {\G_{C'_i}} \cup F_m \cup F_1$, where $F_1$ is the graph with node set $\{v_1,v_2\}$ and edge set $\{\{v_1, v_2\}\}$.
    We then have $\G_2 = \G_{C'_2} \cup \G_3$ and
    $\G_{C'_2} \cap \G_3 = \{v_1, v_2\}$. Therefore, by Theorem~\ref{decomposability}, $S(\G_2)$ decomposes into
    $S(\G_{C'_2})$ and $S(\G_3)$.
    Applying this argument recursively, we conclude that $S(\G_{\C'})$ decomposes into  $S(\G_{C'_i})$ for all $i \in [m]$ and $S(K)$, where the node set of $K$ is given by $\{v_1, \cdots, v_m\}$ and the edge set of $K$ is given by $\{\{v_i,v_{i+1}\}, \forall i \in [m]\}$, where $v_{m+1}:=v_1$. First notice that the multilinear polytope $\MP(\G_{C'_i})$, $i \in [m]$ is given by Proposition~\ref{prop:RLT} as $\G_{C'_i}$ is a complete hypergraph. Moreover, the multilinear polytope $\MP(K)$
    is given by Theorem~9 in~\cite{Pad89} as $K$ is a chordless cycle; \ie $\MP(K)$ consists of odd-cycle inequalities~\eqref{oddCycle} together with inequalities defining $\MP(F_i)$, $i \in [m]$ where the node set and the edge set of $F_i$ are given by $\{v_i, v_{i+1}\}$
    and $\{\{v_i, v_{i+1}\}\}$, where $v_{m+1} := v_1$. Since
    $F_i$ is a section hypergraph of $\C'_i$ for all $i \in [m]$, we deduce that the inequalities defining $\MP(F_i)$ are implied by inequalities defining $\MP(\G_{C'_i})$.    Therefore, $\MP(\G_{\C'})$ is defined by inequalities defining  $\MP(\G_{C'_i})$, $i \in [m]$ together with odd-cycle inequalities for $K$. Therefore, by Proposition~\ref{prop_fix}, $\MP(\G_{\C})$ is defined by inequalities defining $\MP(\G_{C_i})$, $i \in [m]$ together with inequalities~\eqref{liftedoddCycle}.
\end{proof}

Henceforth, we refer to inequalities~\eqref{liftedoddCycle} as the \emph{lifted odd-cycle inequalities}.
We then define the \emph{multi-clique relaxation} of the multilinear set, denoted by $\MP^{\rm Mcl}(\G)$, as the polytope obtained by adding all lifted odd-cycle inequalities~\eqref{liftedoddCycle} corresponding to cycles of cliques of length at most $m \leq M$ to the clique relaxation $\MP^{\rm cl}(\G)$. We now define our final LP relaxation  which we refer to as the \emph{multi-clique LP}:
\begin{align}\label{McliqueLP}\tag{McliqueLP}
    \max \quad &  \alpha \Big(\sum_{v\in V: y_v=1}{z_v}-\sum_{v\in V: y_v=0}{z_v}\Big)+\sum_{v\in V}{c_v z_v}+\sum_{C \in \C}{\sum_{e \in \bar P(C)} c_e z_e} \\
     \st \quad & z \in   \MP^{\rm Mcl}(\G). \nonumber
\end{align}
To control the computational cost of solving Problem~\ref{McliqueLP}, in all our numerical experiments we will set $M = 4$. That is, lifted odd-cycle inequalities are considered for cycles of cliques of length $m \in \{3,4\}$.

In~\cite{del2021chvatal,dPWal23MPB}, the authors introduce odd $\beta$-cycle inequalities, a class of valid inequalities for the multilinear polytope. Let $\C$ be a lifted cycle of cliques of length $m$ and let $\G_{\C}$ be the corresponding UGM hypergraph. Then it can be checked that there exist odd $\beta$-cycle inequalities of $\MP(\G_{\C})$ that are not implied by the clique relaxation $\MP^{\rm cl}(\G_{\C})$. However, by Proposition~\ref{ccc}, thanks to inequalities~\eqref{liftedoddCycle}, odd $\beta$-cycle inequalities are implied by the multi-clique relaxation $\MP^{\rm mcl}(\G_{\C})$. For a general UGM hypergraph $\G$, there may exist odd $\beta$-cycle inequalities are not implied by any multi-clique relaxation. However, as we demonstrate in Sections~\ref{sec:images} an~\ref{sec:decoding}, for inference in binary UGMs, the clique LP and the multi-clique LP are often sharp and hence we do not explore other valid inequalities such as odd $\beta$-cycle inequalities.

\section{First application: image restoration}
\label{sec:images}

Images are often degraded during the data acquisition process. The degradation may involve blurring, information loss due to sampling, quantization effects, and various sources of noise.
Image restoration, a popular application in computer vision, aims at recovering the original image from degraded data.  UGMs are a popular tool for modeling the prior information such as \emph{smoothness} in image restoration problems. In this framework, there is a node in the graph corresponding to each pixel in the image and the edges of the graph are chosen to enforce local smoothness conditions.
The majority of the literature on solving the image restoration problem has focused on first-order UGMs, also known as, pairwise models; \ie $|C| = 2$ for all $C \in \C$. The most popular pairwise model for image restoration is the four-nearest neighbors model (see Figure~\ref{fig:cliques} for an illustration)~\cite{willsky02,meltzer05,szeliski08}. While it has been long recognized that higher-order UGMs are better suited for capturing properties of image priors, the complexity of solving Problem~\eqref{map6} has limited their use in practice. Almost all existing literature on higher-order binary UGMs tackles the MAP inference problem by reducing it to a binary quadratic optimization optimization to benefit from efficient optimization algorithms available for that problem class~\cite{ali2008optimizing,ishikawa2009higher,rother2009minimizing,ishikawa2010transformation,ishikawa2014higher,fix2014primal}.

In this paper, we limit our attention to black and white images. An image is a rectangle
consisting of $l \times h$ pixels and it is modeled as a matrix of the same dimension where each element represents a pixel which takes value $0$ or $1$.  In computer vision applications, the cliques of UGMs are often $m \times n$ patches. For example, in a third-order UGM consisting of $2\times2$ patches, each clique consists of four pixels indexed by $(i,j),(i,j+1), (i+1,j), (i+1, j+1)$ for all $i \in [l-1]$ and $j \in [h-1]$ (see Figure~\ref{fig:cliques} for an illustration).
Throughout this section, we make use of this popular model for cliques of the UGM. For the multi-clique LP, we consider cycles of cliques of length four, which is the minimum length for these problems (see Figure~\ref{fig:cyclesofcliques}).

In order to determine the parameters of clique potentials, \ie $c_p$, $p \in V \cup \E$ as defined in Problem~\eqref{map6}, we make use of \emph{pattern-based potentials} introduced in ~\cite{komodakis2009beyond,crama2017class}. Consider a $2 \times 2$ patch in a black and white image;
by symmetry, we can divide all different pixel configurations into four groups (see Table~\ref{table:patterns}).  Letting $\varphi(z):=\sum_{e \in P(C)} c_e \prod_{v \in e} z_v$ for any $C \in \C$,
we then assign the same potential value $\varphi_i$ to all configurations in the $i$th group, essentially stating that they are equally smooth. Using
the values of $\varphi_1, \cdots, \varphi_4$, we can then compute the coefficients $c_e$, $e \in P(C)$.
Hence, it remains to determine parameters $\varphi_1, \cdots, \varphi_4$ and $\alpha$. In the following, we consider two schemes to determine these parameters.

\begin{figure}
\begin{center}
\begin{subfigure}[b]{0.45\textwidth}
\centering
\begin{tikzpicture}[scale=1.0]
\foreach \i in {0,1,2,3} {
  \foreach \j in {0,1,2,3} {
    \node[circle, draw = black, fill=blue!20, inner sep=2pt] (n\i\j) at (\i,\j) {};
  }
}
\foreach \i in {0,1,2} {
  \foreach \j in {0,1,2,3} {
    \draw[line width=2pt, draw=lightgreen] (n\i\j) -- (n\the\numexpr\i+1\relax\j);
  }
}
\foreach \i in {0,1,2,3} {
  \foreach \j in {0,1,2} {
    \draw[line width=2pt, draw=lightgreen] (n\i\j) -- (n\i\the\numexpr\j+1\relax);
  }
}
\end{tikzpicture}
\caption{first-order UGM}
\end{subfigure}
\hspace{0.5cm}
\begin{subfigure}[b]{0.45\textwidth}
\centering
\begin{tikzpicture}[scale=1.0]
\foreach \i in {0,1,2,3} {
  \foreach \j in {0,1,2,3} {
    \node[circle, draw = black, fill=blue!20, inner sep=2pt] (n\i\j) at (\i,\j) {};
  }
}
\foreach \i in {0,1,2} {
  \foreach \j in {0,1,2} {
    \draw[rounded corners=28pt, line width=2pt, draw=lightgreen] (\i-0.4,\j-0.4) rectangle (\the\numexpr\i+1\relax+0.4,\the\numexpr\j+1\relax+0.4);
  }
}
\end{tikzpicture}
\caption{third order UGM}
\end{subfigure}

    \caption{Illustration of clique configurations in a first-order UGM and a third order UGM for image restoration.}
\label{fig:cliques}
\end{center}

\end{figure}
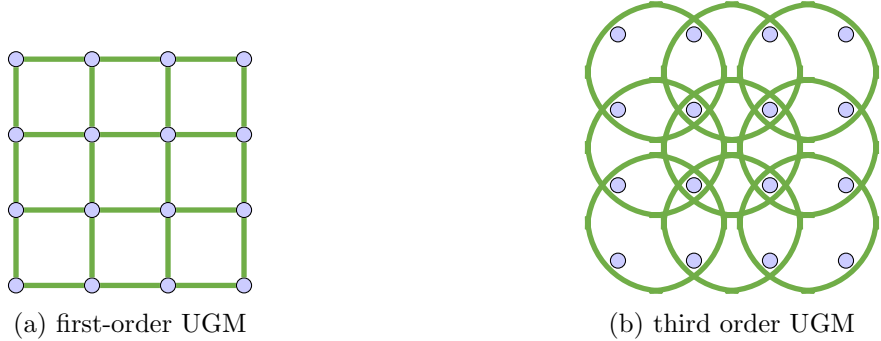

\begin{table}[h]
\caption{Pattern-based potentials for a $2\times 2$ patch in a black and white image}
\label{table:patterns}
\centering
\begin{tabular}{clclclclclclclclc}
\hline
\multicolumn{16}{l}{Variable   assignment}                                                                                                                                       & Potential value         \\ \hline
\multicolumn{2}{c}{\multirow{2}{*}{\begin{tabular}[c]{@{}c@{}}0 0\\      0 0\end{tabular}\quad}} & \multicolumn{2}{c}{\multirow{2}{*}{\begin{tabular}[c]{@{}c@{}}1 1\\      1 1\end{tabular}\quad}} & \multicolumn{2}{c}{\multirow{2}{*}{}}                                                       & \multicolumn{2}{c}{\multirow{2}{*}{}}                                                       & \multicolumn{2}{c}{\multirow{2}{*}{}}                                                       & \multicolumn{2}{c}{\multirow{2}{*}{}}                                                       & \multicolumn{2}{c}{\multirow{2}{*}{}}                                                       & \multicolumn{2}{c}{\multirow{2}{*}{}}                                                       & \multirow{2}{*}{$\varphi_{1}$} \\
\multicolumn{2}{c}{}                                                                        & \multicolumn{2}{c}{}                                                                        & \multicolumn{2}{c}{}                                                                        & \multicolumn{2}{c}{}                                                                        & \multicolumn{2}{c}{}                                                                        & \multicolumn{2}{c}{}                                                                        & \multicolumn{2}{c}{}                                                                        & \multicolumn{2}{c}{}                                                                        &                     \\
\multicolumn{2}{c}{\multirow{2}{*}{\begin{tabular}[c]{@{}c@{}}0 0\\      0 1\end{tabular}\quad}} & \multicolumn{2}{c}{\multirow{2}{*}{\begin{tabular}[c]{@{}c@{}}0 0\\      1 0\end{tabular}\quad}} & \multicolumn{2}{c}{\multirow{2}{*}{\begin{tabular}[c]{@{}c@{}}0 1\\      0 0\end{tabular}\quad}} & \multicolumn{2}{c}{\multirow{2}{*}{\begin{tabular}[c]{@{}c@{}}1 0\\      0 0\end{tabular}\quad}} & \multicolumn{2}{c}{\multirow{2}{*}{\begin{tabular}[c]{@{}c@{}}1 1\\      1 0\end{tabular}\quad}} & \multicolumn{2}{c}{\multirow{2}{*}{\begin{tabular}[c]{@{}c@{}}1 1\\      0 1\end{tabular}\quad}} & \multicolumn{2}{c}{\multirow{2}{*}{\begin{tabular}[c]{@{}c@{}}1 0\\      1 1\end{tabular}\quad}} & \multicolumn{2}{c}{\multirow{2}{*}{\begin{tabular}[c]{@{}c@{}}0 1\\      1 1\end{tabular}\quad}} & \multirow{2}{*}{$\varphi_{2}$} \\
\multicolumn{2}{c}{}                                                                        & \multicolumn{2}{c}{}                                                                        & \multicolumn{2}{c}{}                                                                        & \multicolumn{2}{c}{}                                                                        & \multicolumn{2}{c}{}                                                                        & \multicolumn{2}{c}{}                                                                        & \multicolumn{2}{c}{}                                                                        & \multicolumn{2}{c}{}                                                                        &                     \\
\multicolumn{2}{c}{\multirow{2}{*}{\begin{tabular}[c]{@{}c@{}}1 1\\      0 0\end{tabular}\quad}} & \multicolumn{2}{c}{\multirow{2}{*}{\begin{tabular}[c]{@{}c@{}}0 0\\      1 1\end{tabular}\quad}} & \multicolumn{2}{c}{\multirow{2}{*}{\begin{tabular}[c]{@{}c@{}}1 0\\      1 0\end{tabular}\quad}} & \multicolumn{2}{c}{\multirow{2}{*}{\begin{tabular}[c]{@{}c@{}}0 1\\      0 1\end{tabular}\quad}} & \multicolumn{2}{c}{\multirow{2}{*}{}}                                                       & \multicolumn{2}{c}{\multirow{2}{*}{}}                                                       & \multicolumn{2}{c}{\multirow{2}{*}{}}                                                       & \multicolumn{2}{c}{\multirow{2}{*}{}}                                                       & \multirow{2}{*}{$\varphi_{3}$} \\
\multicolumn{2}{c}{}                                                                        & \multicolumn{2}{c}{}                                                                        & \multicolumn{2}{c}{}                                                                        & \multicolumn{2}{c}{}                                                                        & \multicolumn{2}{c}{}                                                                        & \multicolumn{2}{c}{}                                                                        & \multicolumn{2}{c}{}                                                                        & \multicolumn{2}{c}{}                                                                        &                     \\
\multicolumn{2}{c}{\multirow{2}{*}{\begin{tabular}[c]{@{}c@{}}1 0\\      0 1\end{tabular}\quad}} & \multicolumn{2}{c}{\multirow{2}{*}{\begin{tabular}[c]{@{}c@{}}0 1\\      1 0\end{tabular}\quad}} & \multicolumn{2}{c}{\multirow{2}{*}{}}                                                       & \multicolumn{2}{c}{\multirow{2}{*}{}}                                                       & \multicolumn{2}{c}{\multirow{2}{*}{}}                                                       & \multicolumn{2}{c}{\multirow{2}{*}{}}                                                       & \multicolumn{2}{c}{\multirow{2}{*}{}}                                                       & \multicolumn{2}{c}{\multirow{2}{*}{}}                                                       & \multirow{2}{*}{$\varphi_{4}$} \\
\multicolumn{2}{c}{}                                                                        & \multicolumn{2}{c}{}                                                                        & \multicolumn{2}{c}{}                                                                        & \multicolumn{2}{c}{}                                                                        & \multicolumn{2}{c}{}                                                                        & \multicolumn{2}{c}{}                                                                        & \multicolumn{2}{c}{}                                                                        & \multicolumn{2}{c}{}                                                                        &                     \\ \hline
\end{tabular}
\end{table}

\subsection{Synthetic images}
Our first objective is to compare and contrast various LP relaxations of Problem~\ref{map6} defined in Section~\ref{sec:lprelaxations}. To compare these LPs, we use two metrics: $(i)$ percentage of relative optimality gap defined as $r_g:=\frac{f^*-g^*}{f^*} \times 100$, where $f^*$ denotes the optimal value of Problem~\ref{map6}, while $g^*$ denotes the optimal value of an LP relaxation and $(ii)$ CPU time (seconds). It can be shown that to solve Problem~\ref{map6}, it suffices to add the constraint $z_v \in \{0,1\}$ for all $v \in V$ to Problem~\ref{stdLP}. Henceforth, we refer to the resulting binary integer program as the IP.
We generate random images as described in~\cite{crama2017class}. The authors of~\cite{crama2017class} set the parameters of the inference problem as follows: $\alpha=25$,  $\varphi_1=-10$, $\varphi_2=-20$, $\varphi_3=-30$, and $\varphi_4=-40$. They then
consider three types of ground truth images classified as Top Left Rectangle (TL), Center Rectangle (CEN), and CROSS
(see Figure~\ref{fig:carma_matrix}).
\begin{figure}
\centering
\begin{subfigure}[t]{0.2\textwidth}
\centering
\includegraphics[width=\linewidth]{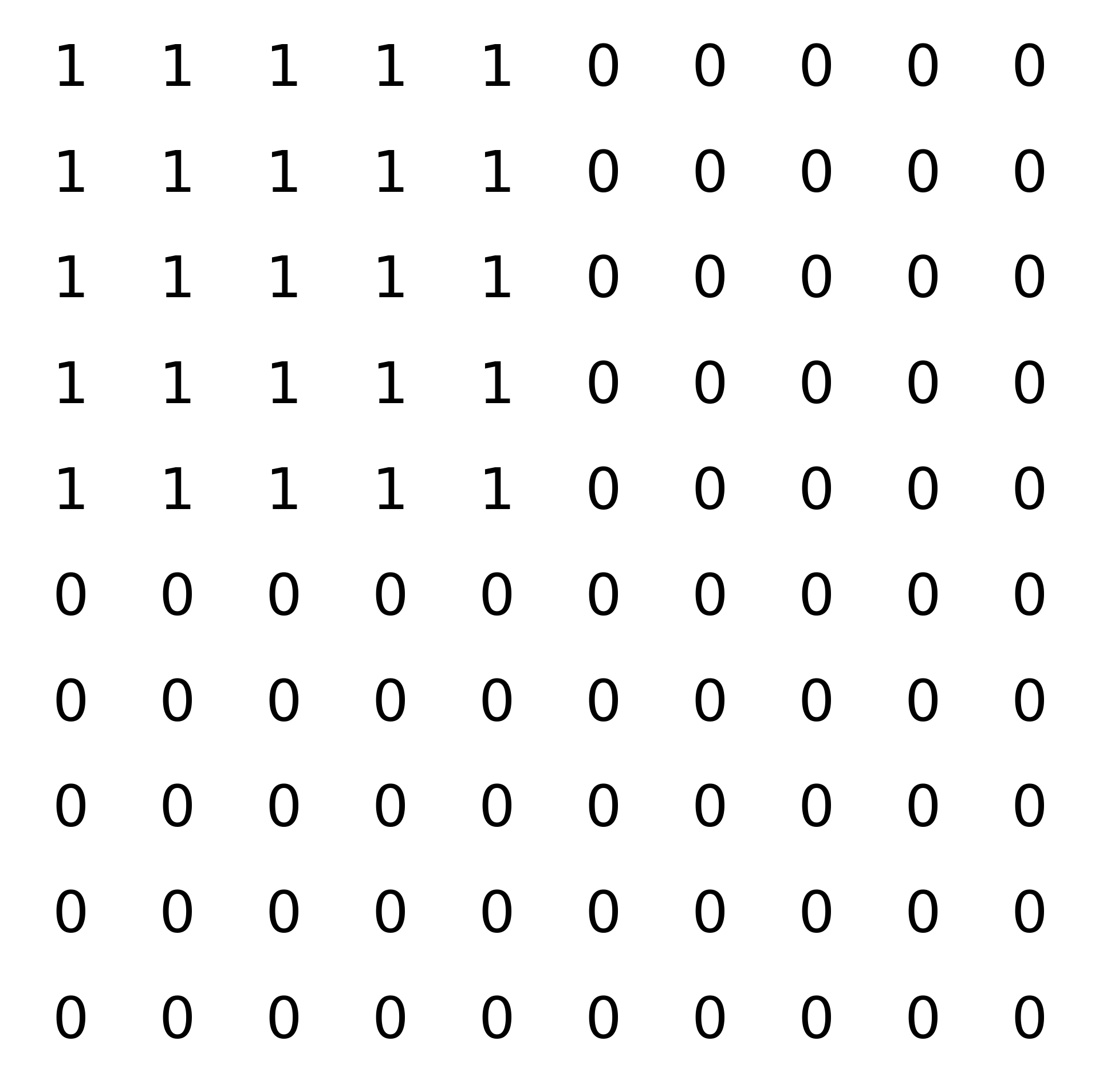}
\caption{TL}
\end{subfigure}
\begin{subfigure}[t]{0.2\textwidth}
\centering
\includegraphics[width=\linewidth]{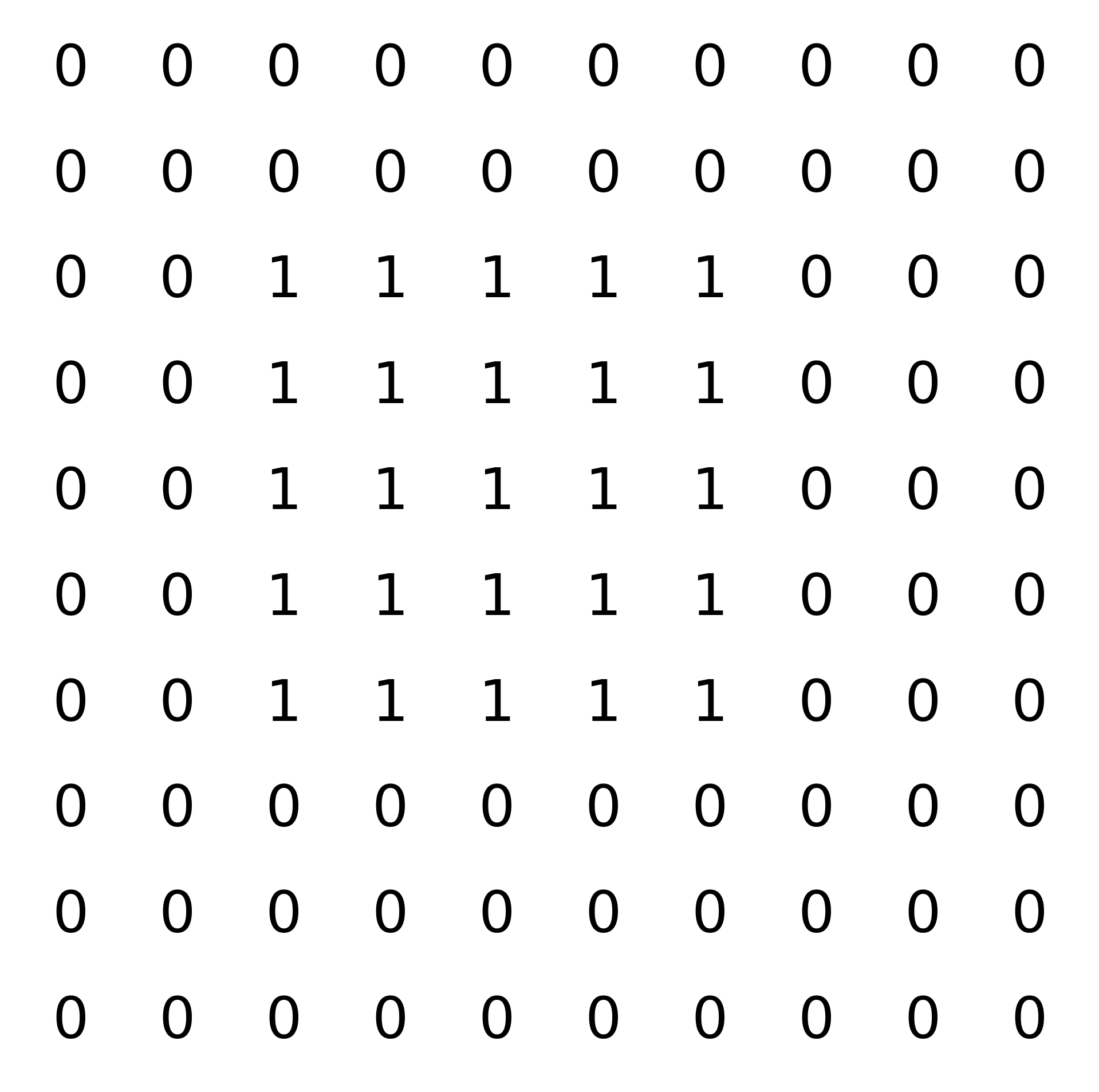}
\caption{CEN}
\end{subfigure}
\begin{subfigure}[t]{0.2\textwidth}
\centering
\includegraphics[width=\linewidth]{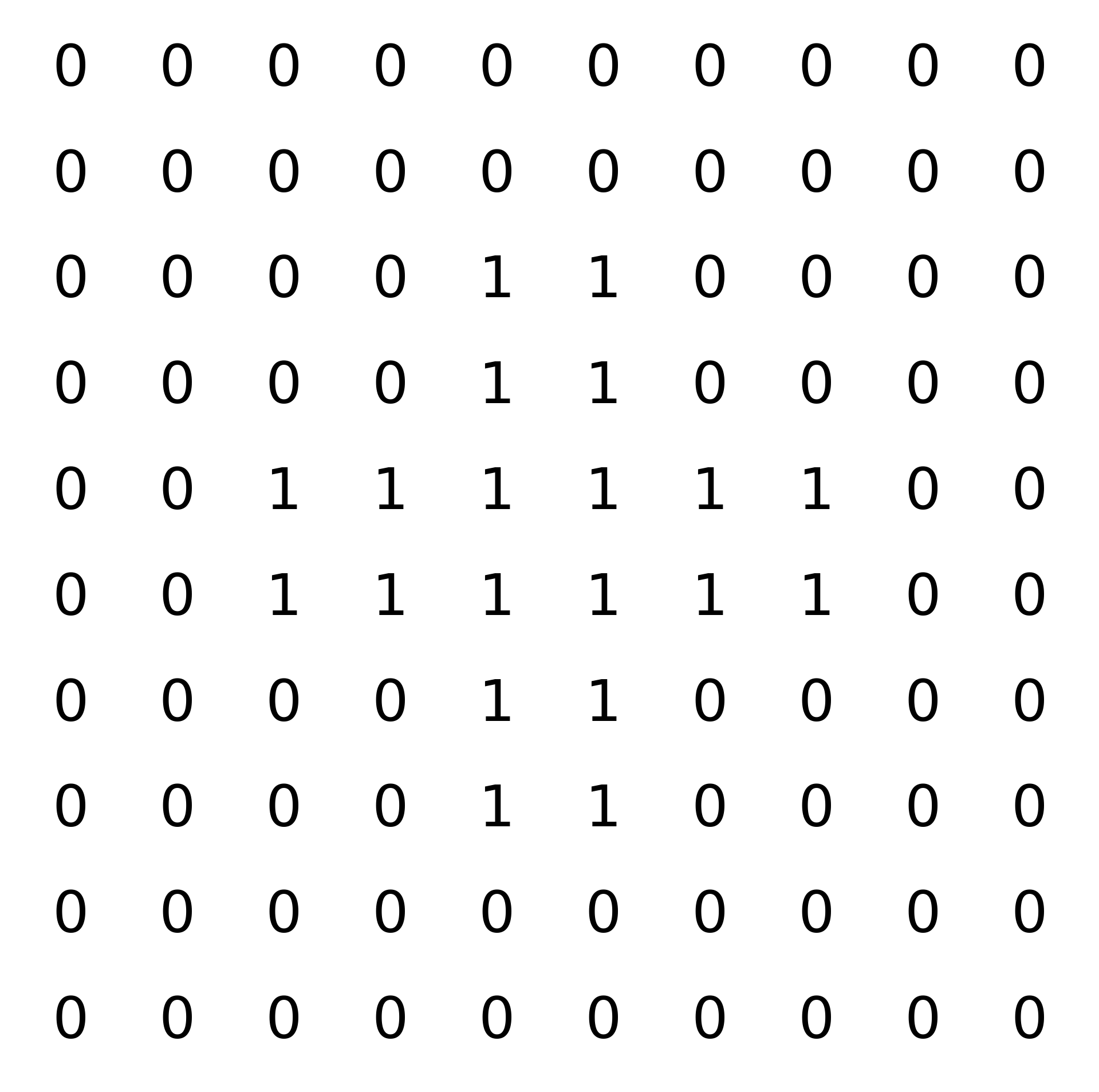}
\caption{CROSS}
\end{subfigure}
\caption{Ground truth images}
\label{fig:carma_matrix}
\end{figure}
For each image type, we consider two sizes: small-size $15 \times 15$ images and medium-size $100 \times 100$ images.
To generate blurred images, as in~\cite{crama2017class}, we employ the bit-flipping noise model defined in Section~\ref{introduction} with $p \in [0.1:0.1:0.5]$, where $[0.1:0.1:0.5]$ denotes a regularly-spaced vector between $0.1$ and $0.5$ using $0.1$ as the
 increment between elements. For each fixed $p$, we generate 50 random instances and report average relative optimality gaps and average CPU times. All LPs and IPs are solved with {\tt Gurobi~11.0}, where all options are set to their default values.
The relative optimality gaps and CPU times for different LP relaxations are shown in Figure~\ref{figure:denoising gap} and Figure~\ref{figure:denoising time}, respectively. \emph{In all instances, the clique LP returns a binary solution}; \ie Problem~\ref{cliqueLP} solves the original nonconvex Problem~\ref{map6}. Therefore, for this test set, there is no need to run the multi-clique LP. As can be seen from Figure~\ref{figure:denoising gap}, the standard LP performs poorly in all cases. The flower LP leads to a moderate improvement in the quality of bounds, while the running LP significantly outperforms the flower LP. However, Figure~\ref{figure:denoising time} indicates that the computational cost of solving the running LP is significantly higher than other LP relaxations. Hence, for this test set, the clique LP is the best relaxation as it always solves the original problem and has the lowest computational cost among competitors.

\begin{figure}[htbp]
\centering
\begin{subfigure}[t]{0.43\textwidth}
\centering
\includegraphics[width=\linewidth]{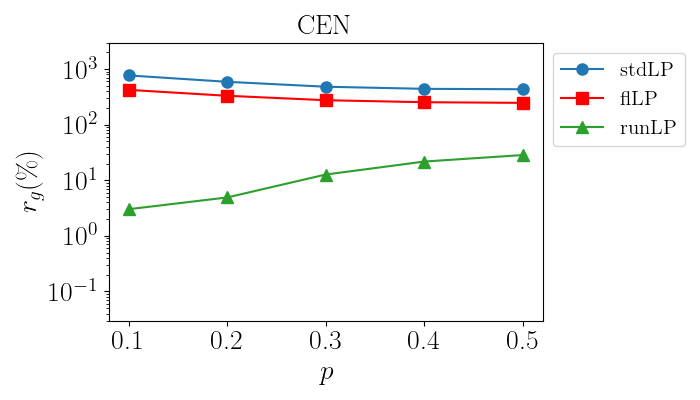}
\includegraphics[width=\linewidth]{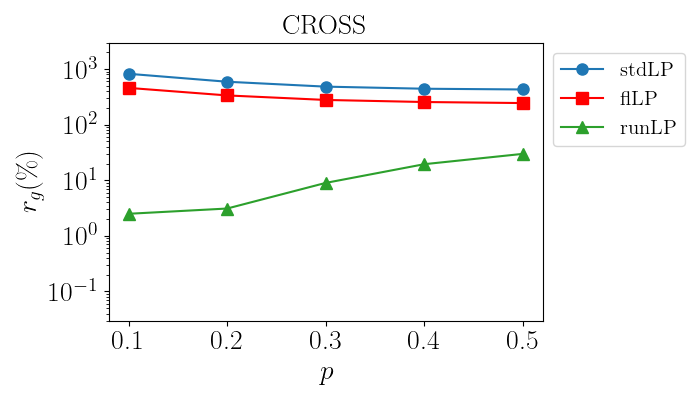}
\includegraphics[width=\linewidth]{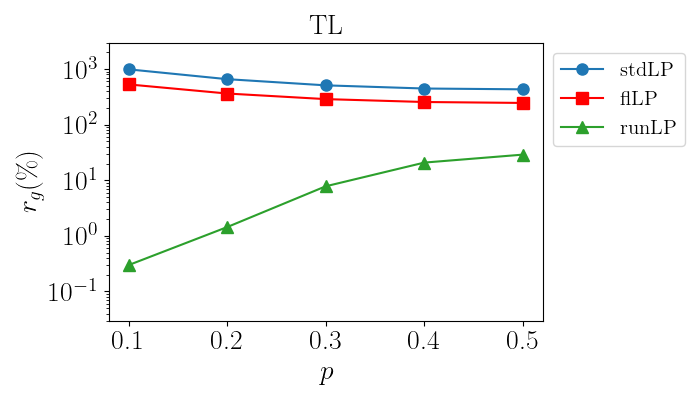}
\caption{$15\times 15$ images}
\end{subfigure}\hfill
\begin{subfigure}[t]{0.43\textwidth}
\centering
\includegraphics[width=\linewidth]{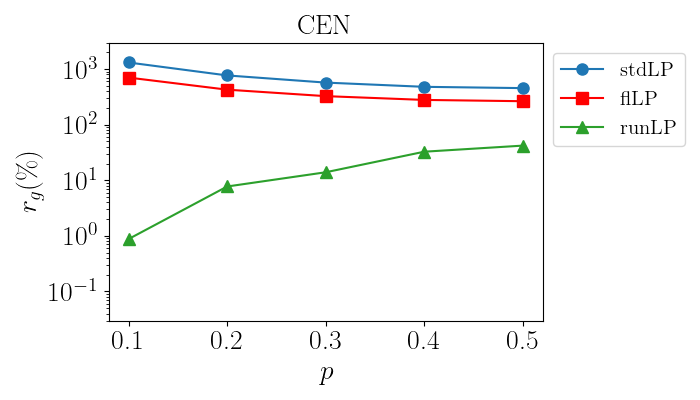}
\includegraphics[width=\linewidth]{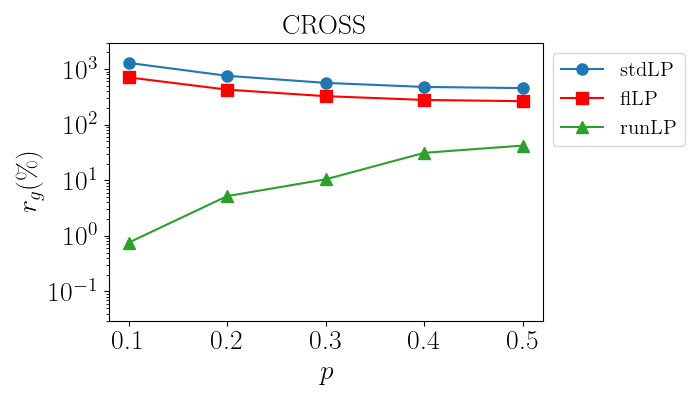}
\includegraphics[width=\linewidth]{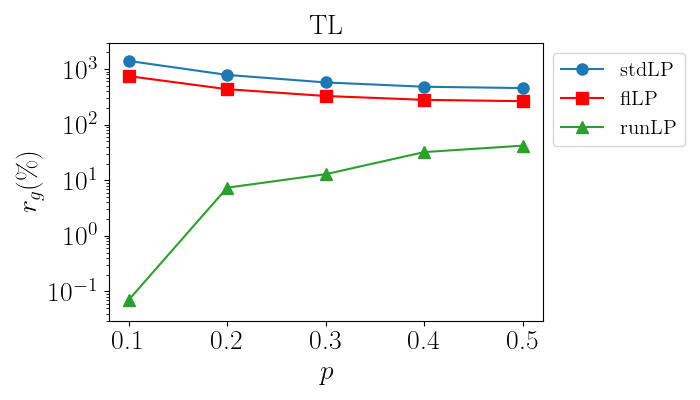}
\caption{$100\times 100$ images}
\end{subfigure}
\caption{Relative optimality gap of LPs for synthetic images. The clique LP is not depicted in these pictures as this LP returns a binary solution in all cases, implying a zero optimality gap.}
\label{figure:denoising gap}
\end{figure}

\begin{figure}[htbp]
\centering
\begin{subfigure}[t]{0.44\textwidth}
\centering
\includegraphics[width=\linewidth]{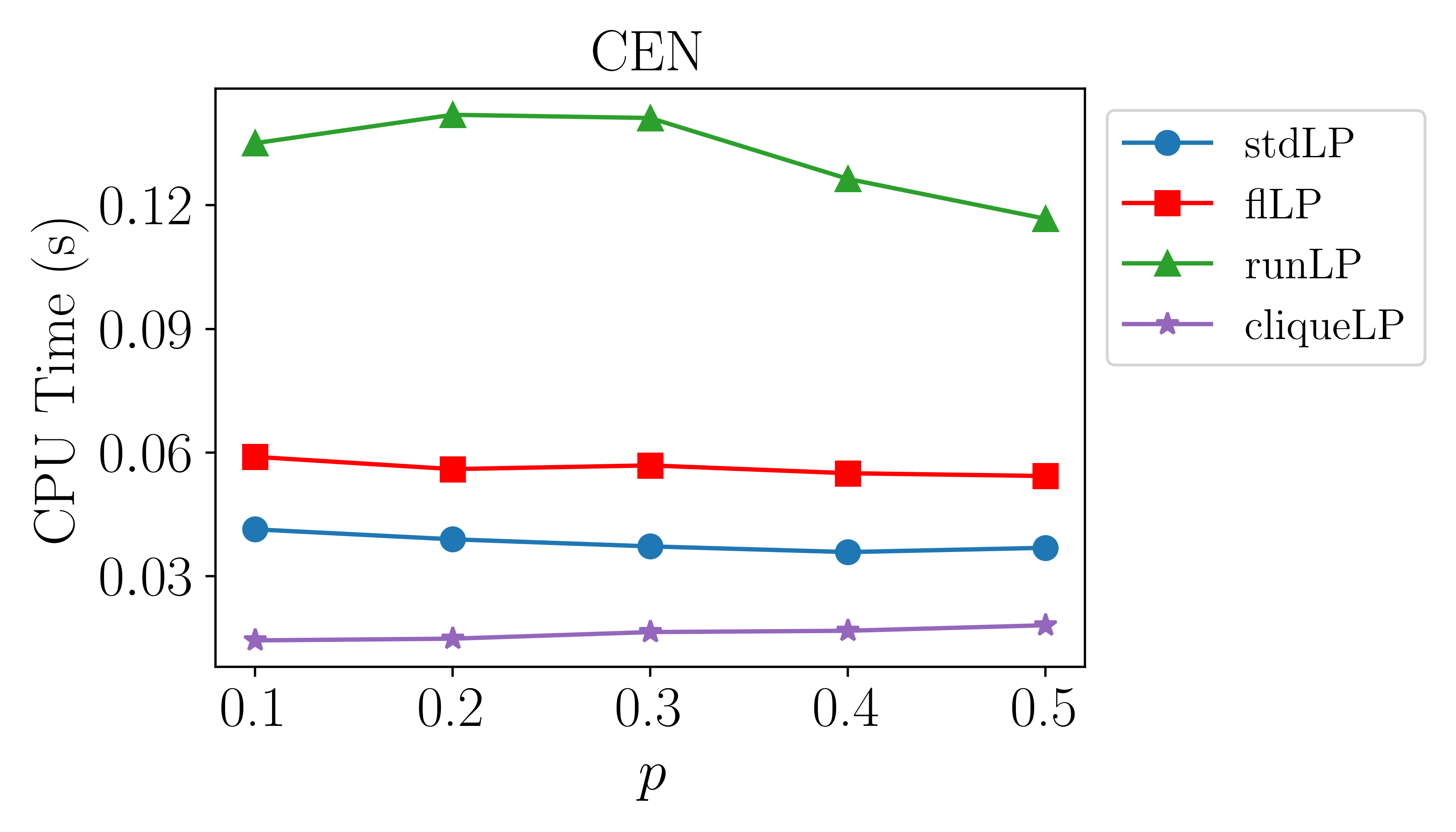}
\includegraphics[width=\linewidth]{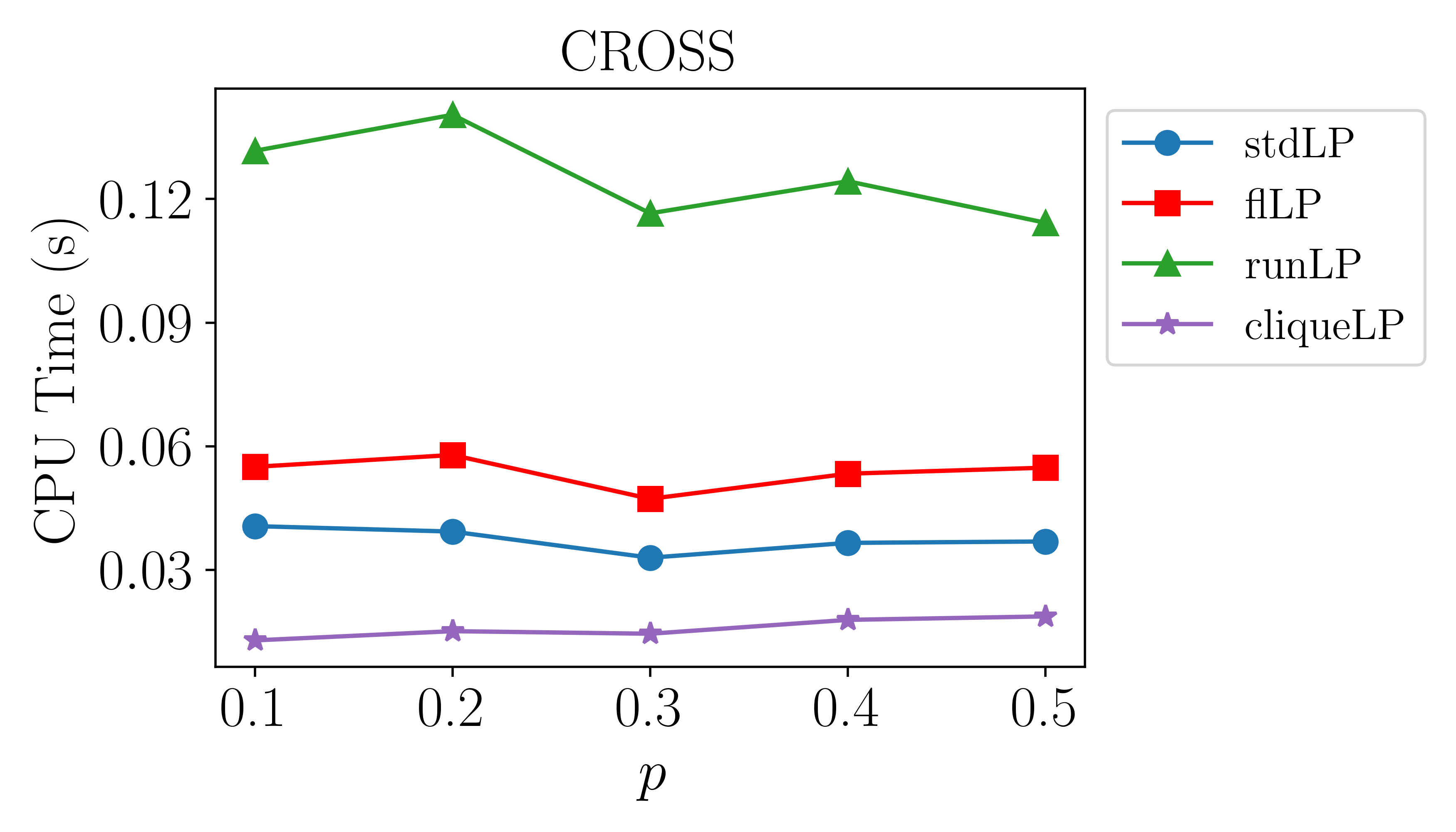}
\includegraphics[width=\linewidth]{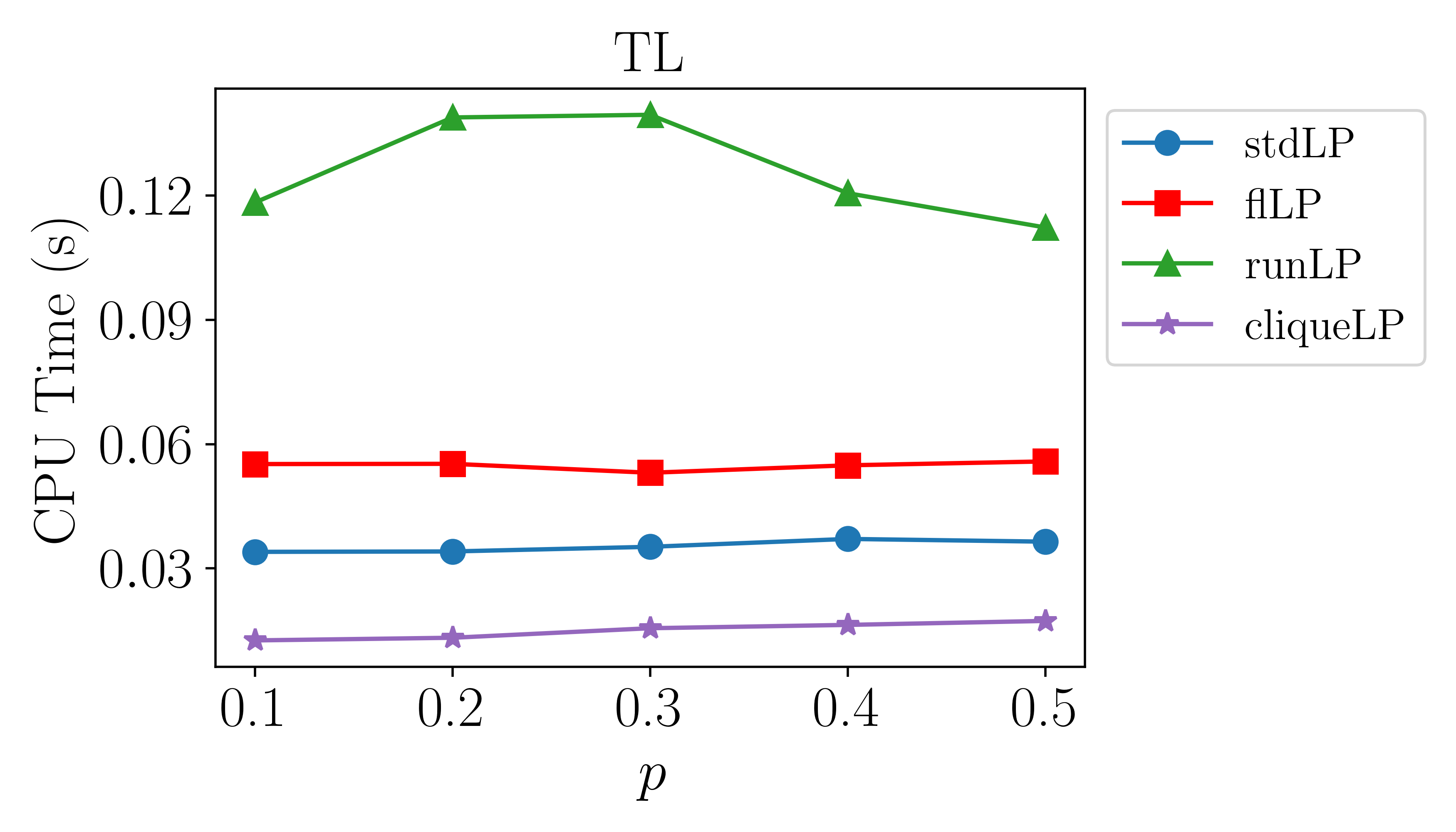}
\caption{$15\times 15$ images}
\end{subfigure}\hfill
\begin{subfigure}[t]{0.44\textwidth}
\centering
\includegraphics[width=\linewidth]{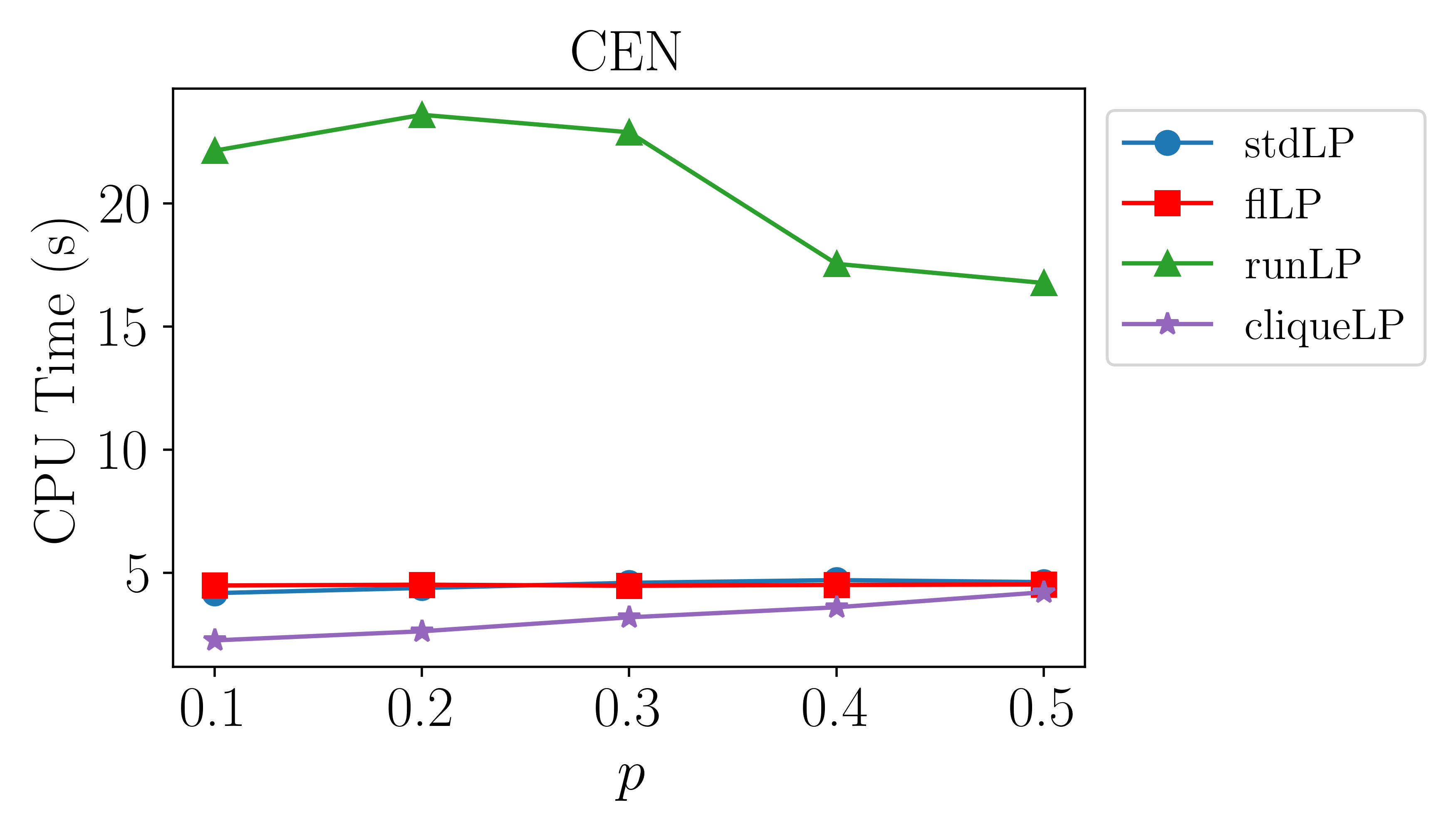}
\includegraphics[width=\linewidth]{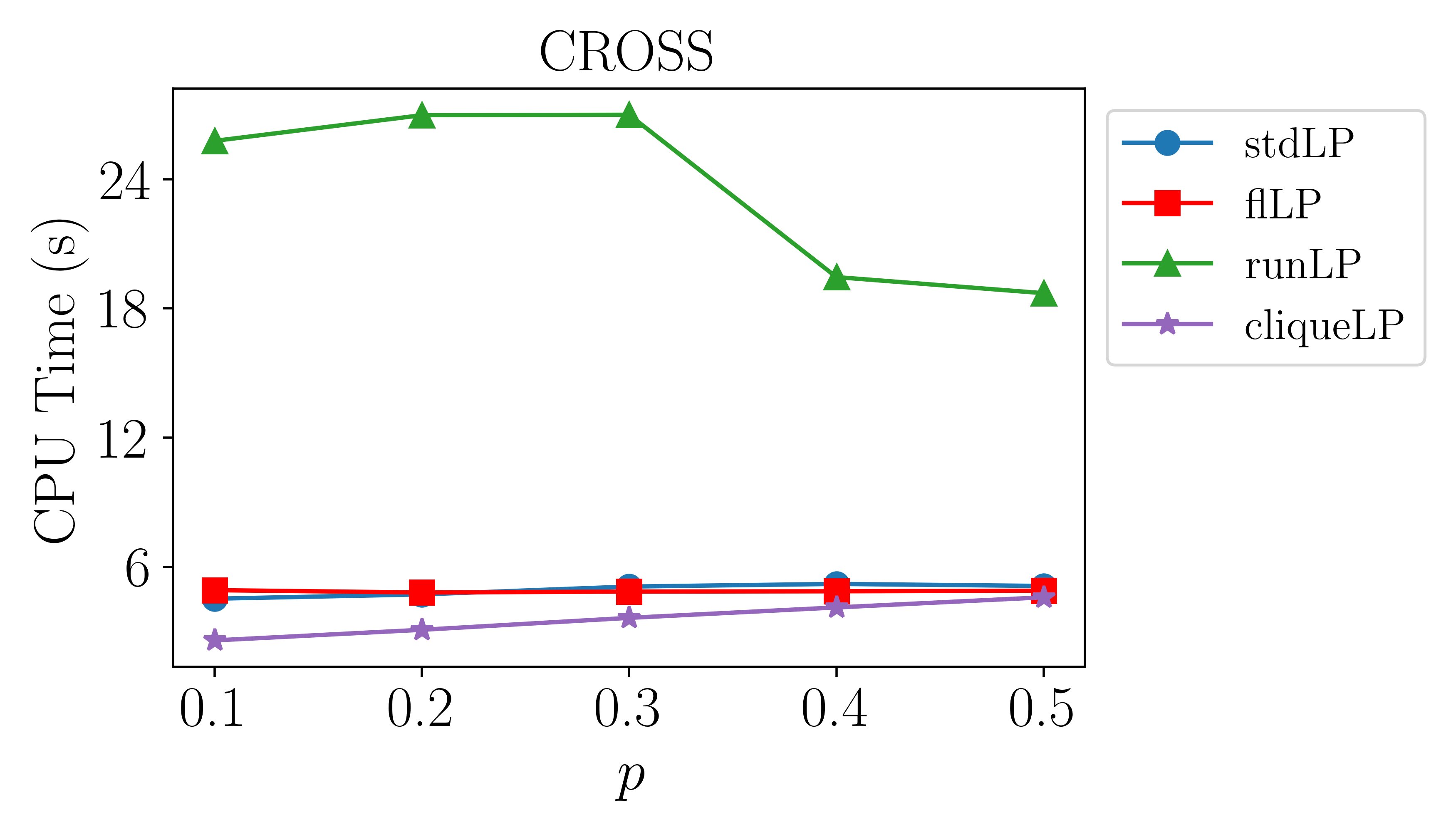}
\includegraphics[width=\linewidth]{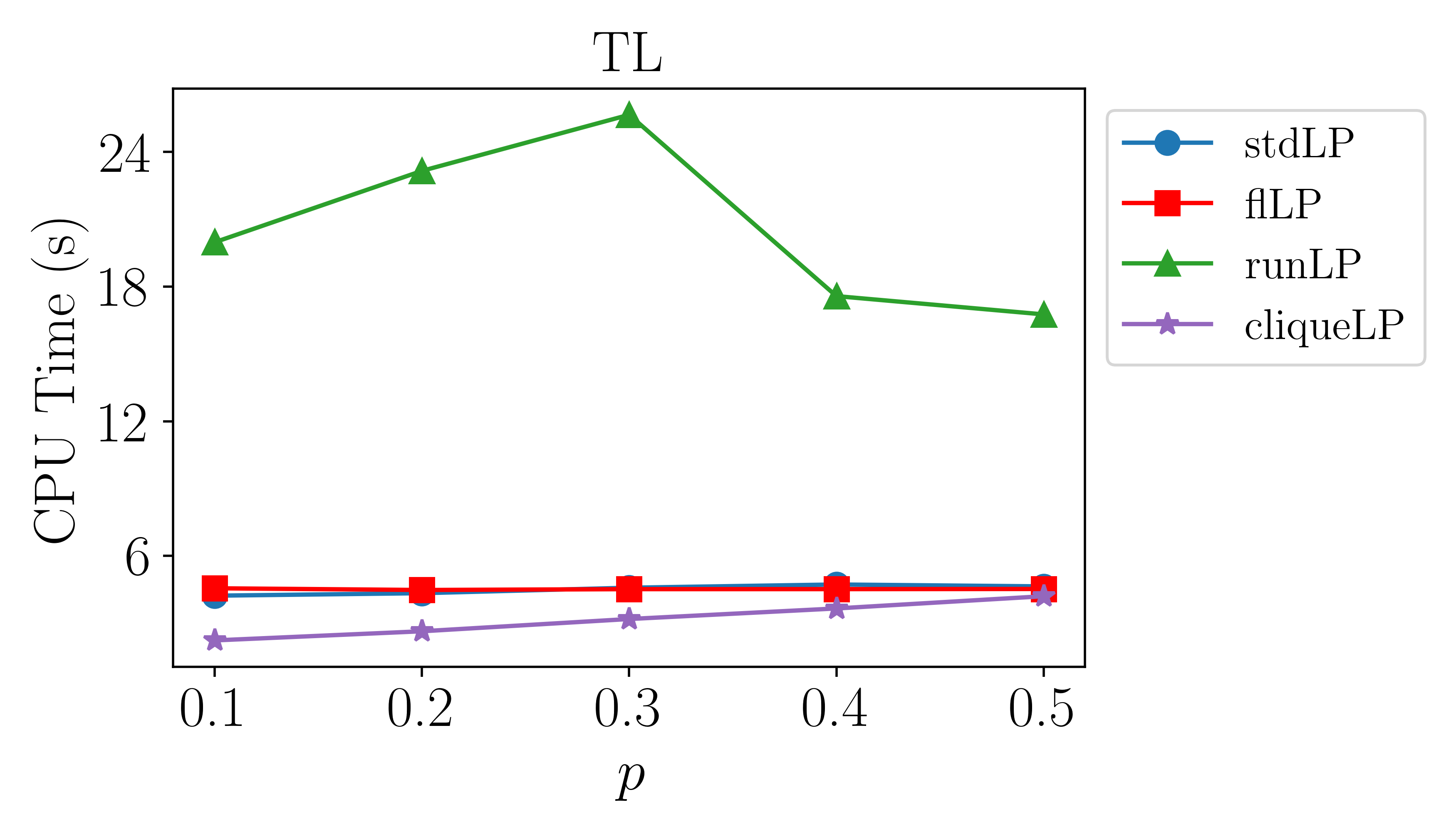}
\caption{$100\times 100$ images}
\end{subfigure}
\caption{CPU time of LP relaxations for synthetic images.}
\label{figure:denoising time}
\end{figure}

To illustrate the importance of constructing strong LP relaxations for Problem~\ref{map6}, let us comment on the computational cost of solving the IP; as we mentioned before, the IP is obtained by adding binary requirements for all variables to Problem~\ref{stdLP}. We consider a $100\times100$ ground truth image of type CEN and for each $p \in [0.1:0.1:0.5]$ we generate a blurred image. We then use {\tt Gurobi 11.0} to solve the IP and we set the time limit to 1800 seconds.  Results are summarized in Table~\ref{table1}; none of the IP instances are solved to optimality within the time limit; we also report the relative gap of the IP upon termination. As can be seen from the table, in all cases the relative gap is larger than $200\%$. Interestingly, in all cases the clique LP returns a binary solution in less than $5$ seconds.

\begin{table}[htpb]
\centering
\caption{Performance of {\tt Gurobi 11.0} when solving the integer program for $100\times 100$ images of type CEN. The integer program is obtained by adding the integrality constraints to the standard LP.}
\begin{tabular}{|c|c|c|c|}
\hline
$p$ & IP time (s)& gap $(\%)$ & clique LP time (s) \\ \hline
$0.1$ & $> 1800$ & $263$  & $2.04$\\
$0.2$ & $> 1800$ & $250$  & $2.36$\\
$0.3$ & $> 1800$ & $231$  & $4.17$\\
$0.4$ & $> 1800$ & $220$  & $4.25$\\
$0.5$ & $> 1800$ & $216$  & $4.42$\\ \hline
\end{tabular}
\label{table1}
\end{table}

\subsection{QR codes}
In this section, we demonstrate the effectiveness of the proposed LPs to restore an important type of real-world images: QR Codes. To this end, we utilize a more systematic approach for setting parameters of the inference problem so that we examine the quality of restored images. Henceforth, given a ground truth image and an algorithm for solving the image restoration problem, we measure the quality of the restored image in terms of \emph{partial recovery}; \ie the fraction of pixels that are identical in ground truth and restored images.
Whenever an LP returns a fractional solution, we first round the solution to the closest binary point and then compute partial recovery.

To capture the smoothness information for QR codes, we choose to learn the potential values from  small-size and ``mildly blurred'' images. That is, we generate $10$ distinct $50\times50$ QR codes and we use the bit-flipping noise model with $p=0.05$ to generate mildly blurred instances.
Next, we compute the average fraction of times $f_i$, $i \in [4]$ each group listed in Table~\ref{table:patterns} appears in these images. We then set $\varphi_i = - f_i$ for all $i \in [4]$.
While this method is unrealistic for real-world problems as it assumes we have the ground truth image at hand, it imitates practical approaches in which practitioners consider a large database of somewhat clean images to learn the frequency of different potential patterns. Using more sophisticated techniques to learn potential values is beyond the scope of this paper.

Next, we describe how to choose parameter $\alpha$; recall that $\alpha$ balances the similarity of the restored image to the blurred image and the smoothness of the restored image.
We choose $\alpha$ that maximizes the average partial recovery over a set of small-size $50\times 50$ QR codes. More precisely, we generate $10$ distinct QR codes; for each ground truth image, we set $p \in \{0.1, 0.2, 0.3\}$ and for each fixed $p$ we generate 50 random blurred images. We set $\alpha \in [0.1:0.1:1.5]$ and for each fixed $\alpha$ we solve the IP. Notice that since we are considering $50\times 50$ QR codes, {\tt Gurobi} is able to solve the IP in a few seconds. For each $(p, \alpha)$, we compute the average partial recovery over 50 instances and choose $\alpha$ that maximizes this quantity. Results are depicted in Figure~\ref{fig:qr_alpha}; accordingly, we set $\alpha = 1.0$ for our next tests.

\begin{figure}[htbp]
    \centering
    \begin{subfigure}[b]{0.4\textwidth}
        \includegraphics[width=\linewidth]{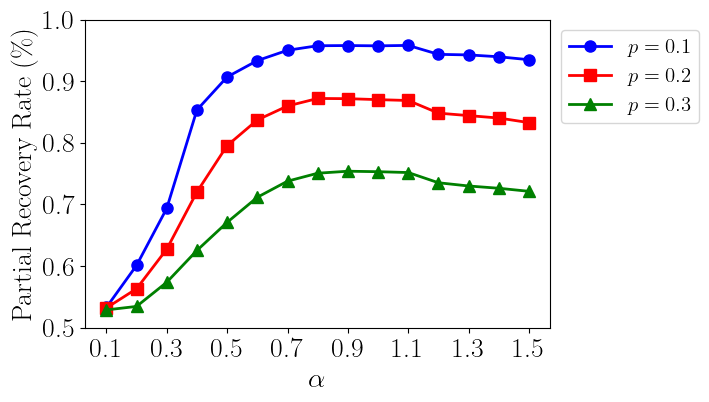}
  %      \caption{CEN}
    \end{subfigure}
    \caption{Learning parameter $\alpha$ using $50\times 50$ QR codes}
    \label{fig:qr_alpha}
\end{figure}

To test the performance of the proposed LP relaxations for restoring QR codes, we construct as the ground truth a $200\times 200$ QR code which contains a $100$-character-long text string. We set $p \in [0.1:0.1:0.5]$ and for each fixed $p$, we generate $50$ random instances. Results are shown in Figure~\ref{fig:qr_testing}. In addition to partial recovery rate, relative optimality gap, and CPU time, we also compare the \emph{tightness rate} of different LPs; we define the tightness rate as the fraction of times each LP returns a binary solution. As can be seen from these figures, for this test set, the standard LP and the flower LP perform quite poorly, whereas, the running LP, the clique LP and the multi-clique LP perform very well.
Namely, the partial recovery rate of these three LPs is very close to that of the IP.
Interestingly, the multi-clique LP has the best tightness rate; however, in many instances for which the running LP and the clique LP return fractional solutions, the relative optimality gaps are very small, and the rounded binary solutions lead to similar partial recovery values to those of the multi-clique LP. As before, the computational cost of solving the running LP is significantly higher than other LPs. Hence for this test set the multi-clique LP is the best option, followed closely by the clique LP.
\begin{figure}[htbp]
    \centering
    \begin{subfigure}[b]{0.4\textwidth}
        \includegraphics[width=\linewidth]{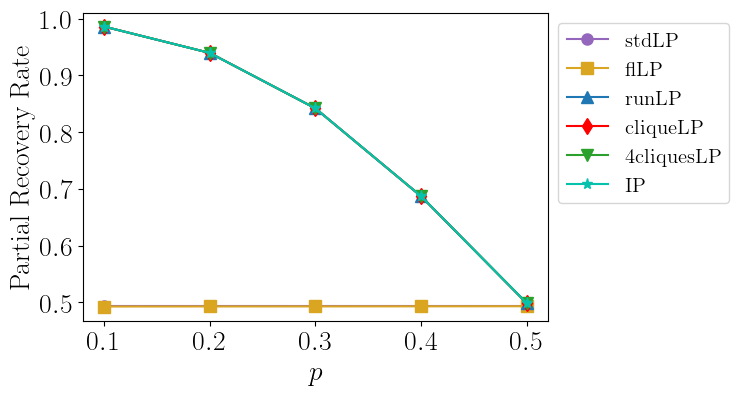}
        \caption{Partial recovery rate}
    \end{subfigure}
    \hspace{0mm}
    \begin{subfigure}[b]{0.4\textwidth}
        \includegraphics[width=\linewidth]{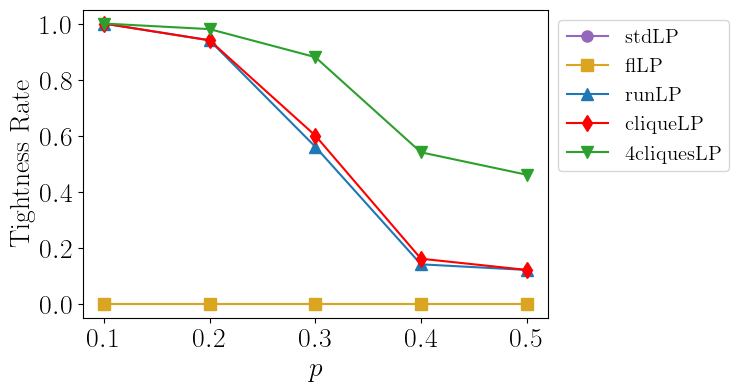}
        \caption{Tightness rate}
    \end{subfigure}

    \vspace{5mm}
    \hspace{0mm}
    \begin{subfigure}[b]{0.4\textwidth}
        \centering
        \includegraphics[width=\linewidth]{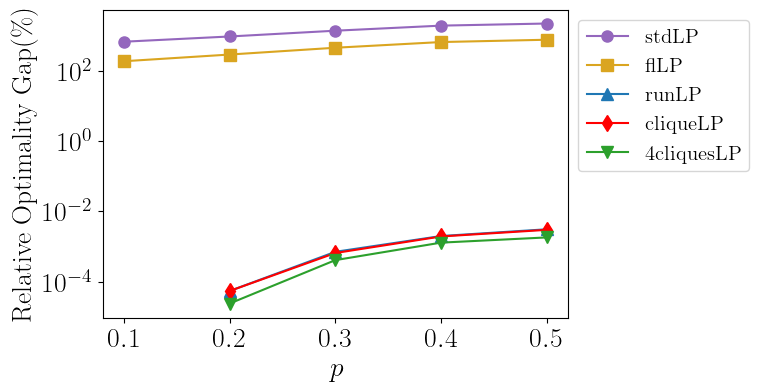}
        \caption{Relative optimality gap}
    \end{subfigure}
    \begin{subfigure}[b]{0.4\textwidth}
        \centering
        \includegraphics[width=\linewidth]{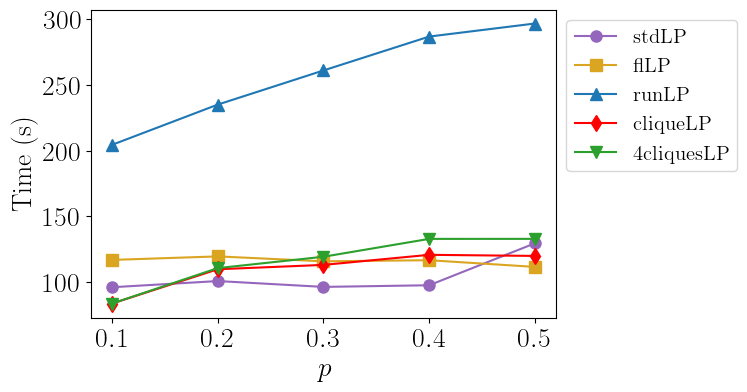}
        \caption{CPU time (s)}
    \end{subfigure}
    \caption{Performance of different LPs for restoring QR codes}
     \label{fig:qr_testing}
\end{figure}

Figure~\ref{fig:QRcodes} shows the $200\times200$ ground truth QR code, together with a noisy instance with $p=0.2$ and the restored QR code obtained by solving the clique LP.
While the noisy QR code (Figure~\ref{fig:QRcodesNoisy}) does not scan, the restored QR code (Figure~\ref{fig:QRcodesRestored}) scans successfully.

\begin{figure}[htbp]
\centering
\begin{subfigure}[b]{0.15\textwidth}
\centering
\includegraphics[width = \linewidth]{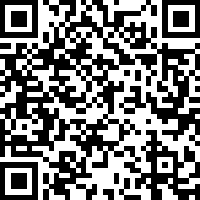}
\caption{original}
\end{subfigure}
\hspace{25mm}
\begin{subfigure}[b]{0.15\textwidth}
\centering
\includegraphics[width = \linewidth]{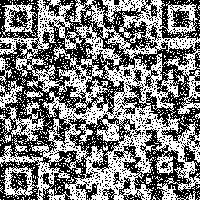}
\caption{noisy}
\label{fig:QRcodesNoisy}
\end{subfigure}
\hspace{25mm}
\begin{subfigure}[b]{0.15\textwidth}
\centering
\includegraphics[width = \linewidth]{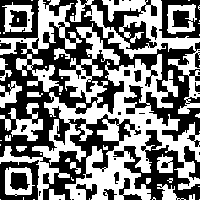}
\caption{restored}
\label{fig:QRcodesRestored}
\end{subfigure}
\caption{Restoring QR codes with LP relaxations.}
\label{fig:QRcodes}
\end{figure}

\section{Second application: decoding error-correcting codes}
\label{sec:decoding}

Transmitting a message, represented as a sequence of binary numbers, across a noisy channel is a central problem in information theory. The received message is often different from the original one due to the presence of noise in the channel and the goal in decoding is to recover the ground truth message. To this end, a common strategy is to transmit some redundant bits along with the original message, containing additional information about the message, so that some of the errors can be corrected. Such messages are often referred to as \emph{error-correcting} codes. Low Density Parity Check (LDPC) codes, first introduced by Gallager~\cite{gallager1962low}, are a popular type of error-correcting codes in which additional information is transmitted via \emph{parity bits}; it has been shown that LDPC codes enjoy various desirable theoretical and computational properties~\cite{richard2001,feldman2005using,feldman2006lp}. Existing methods for decoding LDPC codes are based on the belief propagation algorithm~\cite{mceliece98} and LP relaxations~\cite{feldman2005using,feldman2006lp}.

LDPC codes are often represented via UGMs; namely, each node of the graph corresponds to a message bit while each clique corresponds to a subset of bits with even parity. Gallager~\cite{gallager1962low} introduced LDPC codes as error-correcting codes with three properties: $(i)$ all cliques have the same cardinality, denoted by $\beta$, $(ii)$ each node appears in the same number of cliques, denoted by $\gamma$, and
$(iii)$ $\beta > \gamma$. Denoting by $n$ the number of message bits, an LDPC code is fully characterized by the triplet $(n, \beta, \gamma)$. Figure~\ref{fig:LDPC}  illustrates an LDPC code with $(n, \beta, \gamma) = (9,3,2)$.

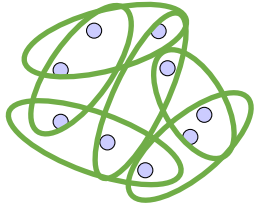
\begin{figure}
\begin{center}
    \begin{tikzpicture}[scale=0.5]
    \tikzset{every node/.style={circle, draw=black, fill=blue!20, inner sep=2pt}}
    \node[] (8) at ({2*cos(0)-0.1}, {2*sin(0)-0.5}) {};
    \node[] (9) at ({2*cos(40)-0.6}, {2*sin(40)-0.55}) {};
    \node[] (2) at ({2*cos(80)+0.35}, {2*sin(80)-0.25}) {};
    \node[] (3) at ({2*cos(120)}, {2*sin(120)}) {};
    \node[] (1) at ({2*cos(160)}, {2*sin(160)}) {};
    \node[] (5) at ({2*cos(200)}, {2*sin(200)}) {};
    \node[] (6) at ({2*cos(240)+0.35}, {2*sin(240)+0.5}) {};
    \node[] (4) at ({2*cos(280)}, {2*sin(280)}) {};
    \node[] (7) at ({2*cos(320)}, {2*sin(320)+0.2}) {};

    \draw[line width=2pt, draw=lightgreen, rotate = -35] (-1.5,-0.2) ellipse (0.8cm and 2cm);%135
    \draw[line width=2pt, draw=lightgreen, rotate = -75] (-1.6,-0.3) ellipse (0.8cm and 2.2cm);%123
    \draw[line width=2pt, draw=lightgreen, rotate = -50] (2.0,0.3) ellipse (0.8cm and 2cm);
%478
    \draw[line width=2pt, draw=lightgreen, rotate = -150] (-1.3,1.0) ellipse (0.8cm and 1.6cm);%789
    \draw[line width=2pt, draw=lightgreen, rotate = -25] (0.2,0.1) ellipse (0.7cm and 2.4cm);%269
    \draw[line width=2pt, draw=lightgreen, rotate = -110] (1.5,-0.5) ellipse (0.8cm and 2.4cm);%456
\end{tikzpicture}
\end{center}
\caption{The clique structure of a $(9, 3,2)$ LDPC code.}
\label{fig:LDPC}
\end{figure}

Now let us formalize the problem of decoding LDPC codes. Consider a ground truth message $x_v$, $v \in V$. Denote by $\oplus$ the addition in modulo two arithmetic.
Then for each $C \in \C$ we must have $\oplus_{v \in C}{z_v} = 1$. That is, we define the clique potentials $\phi_C(z_C)$, $C \in \C$ as follows:
\begin{equation*}
    \phi_C(z_C) = \left\{
    \begin{array}{ll}
         1   & {\rm if} \; \oplus_{v \in C}{z_v} = 1\\
         0  & {\rm otherwise.}
    \end{array}
    \right.
\end{equation*}
The clique potentials can be equivalently written as:
\begin{equation*}
    \phi_C(z_C) = \sum_{\substack{S \subseteq C \\ |S| \; {\rm even}}}{\Big(\prod_{v \in S}{z_v} \prod_{v \in C \setminus S}{(1-z_v)}\Big)}.
\end{equation*}
Denoting the noisy message by $y_v$, $v \in V$ and assuming the bit-flipping model for the noisy channel, we deduce that the decoding problem for an LDPC code can be written as:
\begin{align}\label{map7}\tag{DCD}
    {\rm max} \quad & \sum_{\substack{v\in V: \\ y_v=1}}{z_v}-\sum_{\substack{v\in V:\\ y_v=0}}{z_v}\\
    \st &\sum_{\substack{S \subseteq C \\ |S| \; {\rm even}}}{\Big(\prod_{v \in S}{z_v} \prod_{v \in C \setminus S}{(1-z_v)}\Big)} = 1, \quad \forall C \in \C\nonumber\\
    & z_v \in \{0,1\}, \forall v \in V. \nonumber
\end{align}
Denote by $\D(\C)$ the feasible region of Problem~\ref{map7}.
As before we introduce auxiliary variables $z_e := \prod_{v \in e}{z_v}$ for all $e \in \bar P(C)$ and $C \in \C$ to obtain the following reformulation of Problem~\ref{map7} in an extended space:
\begin{align}\label{map8}\tag{L-DCD}
    {\rm max} \quad & \sum_{\substack{v\in V: \\ y_v=1}}{z_v}-\sum_{\substack{v\in V:\\ y_v=0}}{z_v}\\
    \st \quad  &\sum_{\substack{p \subseteq C:\\ p\neq \emptyset}}{(-2)^{|p|-1} z_p} = 0, \quad \forall C \in \C\nonumber\\
    & z \in S(\G), \nonumber
\end{align}
where $S(\G)$ is the multilinear set of the UGM hypergraph $\G=(V,\E)$, $\E = \cup_{C \in \C}{\bar P(C)}$ and is defined by~\eqref{multSet}. By replacing the nonconvex set $S(\G)$ in $z \in S(\G)$ with the polyhedral relaxations introduced in Section~\ref{sec:lprelaxations}, we obtain various LP relaxations for Problem~\ref{map8}. In the following, we denote by $S^e(\G)$ the feasible region of Problem~\ref{map8} and by $\MP^e(\G)$ its convex hull.

\subsection{The clique relaxation for decoding}

The clique LP for Problem~\ref{map8} is obtained by replacing the constraint $z \in S(\G)$ with $z \in \MP^{\rm cl}(\G)$, where $\MP^{\rm cl}(\G)$ is the clique relaxation and is defined in Section~\ref{sec:cliqueLP}. In the following, we show that the clique LP for Problem~\ref{map8} has an interesting interpretation; namely, it is obtained
by replacing the feasible region of Problem~\ref{map8} corresponding to a single clique $C$ with its convex hull.

\begin{prop}\label{cnvr}
    Let $\G_C$ denote the complete hypergraph with node set $C$. Consider the set
    $$
    S^{e}(\G_C) = \Big\{z \in S(\G_C): \sum_{\substack{p \subseteq C:\\ p\neq \emptyset}}{(-2)^{|p|-1} z_p} = 0\Big\}.
    $$
    Then the convex hull of $S^{e}(\G_C)$  is given by:
    \begin{equation}\label{convNew}
    \MP^{e}(\G_C) = \Big\{z \in \MP(\G_C): \sum_{\substack{p \subseteq C:\\ p\neq \emptyset}}{(-2)^{|p|-1} z_p} = 0\Big\}.
    \end{equation}
\end{prop}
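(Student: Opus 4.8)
The plan is to realize the target set as a (one-sided) face of the multilinear polytope $\MP(\G_C)$: one first checks that the linear functional appearing in the parity equation is nonnegative on every vertex of $\MP(\G_C)$, and then invokes the elementary fact that intersecting a polytope with a supporting hyperplane of such a functional produces exactly the convex hull of those vertices lying on that hyperplane.

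The inclusion ``$\subseteq$'' is immediate and I would dispatch it first: by definition $S^e(\G_C) \subseteq S(\G_C) \subseteq \MP(\G_C)$, the set on the right-hand side of~\eqref{convNew} is the intersection of $\MP(\G_C)$ with a hyperplane, hence convex, and every point of $S^e(\G_C)$ satisfies the parity equation by construction; therefore $\conv(S^e(\G_C))$ is contained in it.

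For the reverse inclusion, write $h(z) := \sum_{p \subseteq C,\, p \neq \emptyset}(-2)^{|p|-1} z_p$. The key step — and the only one requiring any computation — is to show that $h(z^x) \geq 0$ for every $x \in \{0,1\}^C$, where $z^x$ denotes the vertex of $\MP(\G_C)$ with $z^x_p = \prod_{v \in p} x_v$. This is exactly where the structure of the decoding problem enters: the polynomial identity behind the reformulation~\eqref{map8} is $\phi_C(z_C) = 1 - h(z)$, and since the clique potential $\phi_C$ takes only the values $0$ and $1$ on binary inputs, $h(z^x) = 1 - \phi_C(x) \in \{0,1\}$; in particular $h(z^x) \geq 0$, with equality precisely when $z^x \in S^e(\G_C)$. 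Alternatively, a one-line binomial computation gives $h(z^x) = -\tfrac12\big((-1)^{k} - 1\big)$ with $k = |\{v \in C : x_v = 1\}|$, which is $0$ for $k$ even and $1$ for $k$ odd.

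With this in hand the rest is routine. Given $z \in \MP(\G_C)$ with $h(z) = 0$, I would use $\MP(\G_C) = \conv(S(\G_C))$ to write $z = \sum_{x \in \{0,1\}^C} \lambda_x z^x$ with $\lambda_x \geq 0$ and $\sum_x \lambda_x = 1$. Linearity of $h$ yields $0 = h(z) = \sum_x \lambda_x h(z^x)$, a sum of nonnegative terms, so $\lambda_x h(z^x) = 0$ for every $x$; hence $\lambda_x = 0$ whenever $h(z^x) > 0$, i.e. whenever $z^x \notin S^e(\G_C)$. Consequently $z$ is a convex combination of points of $S^e(\G_C)$, so $z \in \conv(S^e(\G_C)) = \MP^e(\G_C)$, completing the argument. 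The main obstacle is thus entirely concentrated in the nonnegativity claim $h(z^x) \geq 0$; everything else is the standard ``face of a $0/1$ polytope'' bookkeeping.
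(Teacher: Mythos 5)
Your proof is correct, and it takes a genuinely different route from the paper's. You observe that the linear functional $h(z)=\sum_{p\subseteq C,\,p\neq\emptyset}(-2)^{|p|-1}z_p$ evaluates to $1-\phi_C(x)\in\{0,1\}$ at every binary point $z^x$ of $S(\G_C)$ (your binomial computation $h(z^x)=-\tfrac{1}{2}\bigl((-1)^k-1\bigr)$ checks out), so $h(z)\geq 0$ is valid for $\MP(\G_C)$ and the equation $h(z)=0$ cuts out a face of it; the convex-combination argument then identifies that face with $\conv(S^e(\G_C))$, since a point of $S(\G_C)$ satisfies $h(z^x)=0$ exactly when it lies in $S^e(\G_C)$. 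The paper instead works through the RLT machinery of Sherali--Adams: it multiplies the parity equation by every factor $F(J_1,J_2)$, notes that this produces a trivial identity when $|J_1|$ is even and the equation $F(J_1,J_2)=0$ when $|J_1|$ is odd, and then argues that the resulting equations $\psi_U(z_C)=0$ for odd $|U|$ are already implied by $\psi_U(z_C)\geq 0$ for all $U$ together with the single linear equation. Your argument is shorter and more elementary, and it isolates the one fact that makes the result true---the parity equation is sign-definite on the binary points of the multilinear set, hence supports a face of $\MP(\G_C)$---an observation that generalizes verbatim to any lifted linear equation with that property. What the paper's route buys in exchange is the additional structural information that all the equations $\psi_U(z_C)=0$ for odd $|U|$ hold on $\MP^e(\G_C)$, and a derivation that stays within the RLT framework of Proposition~\ref{prop:RLT}. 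No gap in your argument.
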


\begin{proof}
To prove the statement, it suffices to show that $\MP^{e}(\G_C)$ is an extended formulation for the convex hull of the set:
    $$
    \D(C) := \Big\{z \in \{0,1\}^C: \sum_{\substack{S \subseteq C \\ |S| \; {\rm even}}}{\Big(\prod_{v \in S}{z_v} \prod_{v \in C \setminus S}{(1-z_v)}\Big)} = 1\Big\}.
    $$
To construct the convex hull of $\D(C)$, we make use of RLT as defined in~\cite{SheAda90}. That is, let $J_1, J_2$ be
any partition of $C$; define the factor $F(J_1,J_2) = \prod_{v \in J_1}{z_v} \prod_{v \in J_2}{(1-z_v)}$.
We first expand each $F(J_1, J_2) \geq 0$ and let
 $z_e = \prod_{v \in e}{z_v}$ for each product term to obtain linear inequalities~\eqref{eq:rlt}.
Subsequently, we multiply the equality constraint
\begin{equation}\label{longeq}
\sum_{\substack{S \subseteq C \\ |S| \; {\rm even}}}{\Big(\prod_{v \in S}{z_v} \prod_{v \in C \setminus S}{(1-z_v)}\Big)} = 1,
\end{equation}
by each factor $F(J_1,J_2)$ and let $z_e = \prod_{v \in e}{z_v}$ to obtain a collection of equations. Let us examine these equations; two cases arise:
\begin{itemize}[leftmargin=*]
    \item if $|J_1|$ is even, then multiplying $F(J_1,J_2)$ by equation~\eqref{longeq} and using $z_v (1-z_v) = 0$ for any $v \in C$, we obtain the trivial equation $F(J_1,J_2) = F(J_1,J_2)$.
    \item if $|J_1|$ is odd, then multiplying $F(J_1,J_2)$ by equation~\eqref{longeq} and using $z_v (1-z_v) = 0$ for any $v \in C$, we obtain $F(J_1, J_2) = 0$.
\end{itemize}
Therefore, by Section~4 of~\cite{SheAda90}, the following system defines an extended formulation for the convex hull of $\D(C)$:
$$
\psi_U(z_C) \geq 0, \;\forall U \subseteq C: |U|\;{\rm even},
\; \psi_U(z_C) = 0, \;\forall U \subseteq C: |U|\;{\rm odd},
$$
where $\psi_U(z_C)$ is defined by~\eqref{defRLT}. To complete the proof, it suffices to show that $\psi_U(z_C) \geq 0$ for all $U \subseteq C$ together with
\begin{equation}\label{longeq2}
\sum_{p \subseteq C, p\neq \emptyset}{(-2)^{|p|-1} z_p} = 0,
\end{equation}
implies $\psi_U(z_C) = 0$ for all $U \subseteq C$ such that $|U|$ is odd. To see this, first note that~\eqref{longeq2} can be equivalently written as
$\sum_{U \subseteq C: |U| {\rm even}}{\psi_U(z_C)} = 1$. Moreover, from the definition of $\psi_U(z_C)$ we have
$\sum_{U \subseteq C}{\psi_U(z_C)} = 1$. These two inequalities imply $\sum_{U \subseteq C: |U| {\rm odd}}{\psi_U(z_C)} = 0$, which together with $\psi_U(z_C) \geq 0$ for all $U \subseteq C$ yield $\psi_U(z_C) = 0$ for all
$U \subseteq C$ such that $|U|$ is odd.
\end{proof}

\subsection{The parity polytope and the parity LP}

In~\cite{jeroslow75}, Jeroslow proved that the convex hull of the set of binary vectors $x \in \{0,1\}^n$ with even parity, denoted by $\P_n$, is given by:
\begin{equation}
     \P_n = \Big\{x \in [0,1]^n:
     \sum_{i\in S} {(1-x_i)}+\sum_{i\in [n]\setminus S} {x_i}\geq 1, \; \forall S\subseteq [n] : |S| \; \text{is odd}\Big\}.
\end{equation}
Using this characterization, the authors of~\cite{feldman2005using}, introduced the following LP relaxation of Problem~\ref{map7}, which we will refer to as the \emph{parity LP}:
\begin{align}\label{parityLP}\tag{parLP}
    {\rm max} \quad & \sum_{\substack{v\in V: \\ y_v=1}}{z_v}-\sum_{\substack{v\in V:\\ y_v=0}}{z_v}\\
    \st\quad &\sum_{v\in S} {(1-z_v)}+\sum_{v\in C\setminus S} {z_v}\geq 1, \quad \forall S\subseteq C : |S| \; \text{is odd}, \; \forall C \in \C \nonumber\\
    & z_v \in [0,1], \forall v \in V. \nonumber
\end{align}
In the following we show that the clique LP is stronger than the parity LP, in general.
Denote by $\P(\C)$ the feasible region of Problem~\ref{parityLP}. If $\C$ consists of a single clique $C$, then by Proposition~\ref{cnvr}:
$$\P(C) = \proj_{z_v, v\in C}(\MP^e(\G_C)) = \proj_{z_v, v\in C}(\MP(\G_C) \cap \H_C),$$
where $\H_C$ denotes the set of points satisfying equality~\eqref{longeq2}.
This implies that the feasible region of the clique LP is contained in the feasible region of the parity LP. As we detail next, this containment is often strict. We first examine the strength of the clique LP. In~\cite{dPKha25}, the authors generalize the decomposition result of Theorem~\ref{decomposability} to account for additional constraints on multilinear sets:

\begin{theorem}[\cite{dPKha25}]
\label{cor constrained decomposition}
Let $\G$ be a hypergraph, and let $\G_1$,$\G_2$ be %distinct
section hypergraphs of $\G$ %and different from $G$
such that $\G_1 \cup \G_2 = \G$ and $\G_1 \cap \G_2$ is a complete hypergraph.
Let $C(\G)$ be the set of points in $S(\G)$ that satisfy a number of constraints, each one containing only variables corresponding to the nodes and edges only in $\G_1$ or only in $\G_2$.
For $i=1,2$, let $C(\G_i)$ be the projection of $C(\G)$ in the space of $S(\G_i)$.
Then, $C(\G)$ is decomposable into $C(\G_1)$ and $C(\G_2)$.
\end{theorem}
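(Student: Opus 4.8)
By definition of decomposability, the goal is to show that $\conv(C(\G))$ coincides with $R:=\{z:\proj_1(z)\in\conv(C(\G_1)),\ \proj_2(z)\in\conv(C(\G_2))\}$, where $\proj_i$ denotes the projection onto the $z$-variables of $\G_i$. Write $V_i,\E_i$ for the node and edge sets of $\G_i$, set $V_0:=V_1\cap V_2$ and $\G_0:=\G_1\cap\G_2=(V_0,\E_0)$, which is a complete section hypergraph of $\G$ by hypothesis; since $\G_1\cup\G_2=\G$ we also have $V=V_1\cup V_2$ and $\E=\E_1\cup\E_2$, so for every $p\in V\cup\E$ the variable $z_p$ is a variable of $\G_1$ or of $\G_2$. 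Let $\proj_0$ be the projection onto the $z$-variables of $\G_0$, and note $\proj_0=\proj_0\circ\proj_i$ for $i=1,2$. Because each side constraint defining $C(\G)$ uses only $\G_1$-variables or only $\G_2$-variables, a point $z\in S(\G)$ lies in $C(\G)$ if and only if $\proj_1(z)$ satisfies the $\G_1$-constraints and $\proj_2(z)$ satisfies the $\G_2$-constraints; consequently every point of $C(\G_i)=\proj_i(C(\G))$ satisfies the $\G_i$-constraints. The inclusion $\conv(C(\G))\subseteq R$ is then immediate: any $z\in C(\G)$ has $\proj_i(z)\in C(\G_i)\subseteq\conv(C(\G_i))$, and $R$ is convex, being the intersection of preimages of convex sets under linear maps.

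For $R\subseteq\conv(C(\G))$, fix $\bar z\in R$ and write $\proj_1(\bar z)=\sum_j\lambda_j s_j$ and $\proj_2(\bar z)=\sum_k\mu_k t_k$ as convex combinations with $s_j\in C(\G_1)\subseteq S(\G_1)$ and $t_k\in C(\G_2)\subseteq S(\G_2)$; let $x^j\in\{0,1\}^{V_1}$ and $y^k\in\{0,1\}^{V_2}$ be the underlying $0/1$ vectors. The key fact is that $\MP(\G_0)$ is a simplex: its $2^{|V_0|}$ vertices $v_\pi$, one per pattern $\pi\in\{0,1\}^{V_0}$, are affinely independent, so $\proj_0(\bar z)\in\MP(\G_0)$ has a \emph{unique} representation as a convex combination of the $v_\pi$. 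Applying $\proj_0$ to these two convex combinations and grouping terms according to $x^j|_{V_0}$ (resp. $y^k|_{V_0}$) produces two such representations of $\proj_0(\bar z)$, hence $\sum_{j:\,x^j|_{V_0}=\pi}\lambda_j=\sum_{k:\,y^k|_{V_0}=\pi}\mu_k=:\nu_\pi$ for every $\pi$. For each pair $(j,k)$ with $x^j|_{V_0}=y^k|_{V_0}=\pi$ and $\nu_\pi>0$, the vectors $x^j$ and $y^k$ agree on $V_0$, so they glue to $w^{j,k}\in\{0,1\}^V$; since $\E=\E_1\cup\E_2$ and $s_j,t_k$ already carry the correct product values, which moreover match on $\E_0$, this yields a point of $S(\G)$ with $\proj_1(w^{j,k})=s_j$ and $\proj_2(w^{j,k})=t_k$. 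Putting $\theta_{j,k}:=\lambda_j\mu_k/\nu_\pi$ over all such pairs, one checks $\sum\theta_{j,k}=1$ and, using the defining relation for $\nu_\pi$, that $\proj_1$ and $\proj_2$ of $\sum\theta_{j,k}w^{j,k}$ equal $\proj_1(\bar z)$ and $\proj_2(\bar z)$; since these projections cover all coordinates and agree on $\G_0$, the combination equals $\bar z$.

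Finally, $w^{j,k}\in C(\G)$: its $\G_1$-projection $s_j$ satisfies the $\G_1$-constraints and its $\G_2$-projection $t_k$ satisfies the $\G_2$-constraints, which by the characterization of the first paragraph is exactly membership in $C(\G)$. Hence $\bar z=\sum\theta_{j,k}w^{j,k}\in\conv(C(\G))$, as required. I expect the main obstacle to be the bookkeeping in the last step of the middle paragraph --- verifying that the product-measure weights $\theta_{j,k}$ reconstruct $\bar z$ itself rather than merely its two projections --- which relies on the simplex property of $\MP(\G_0)$ and on $\bar z$ being the common extension of $\proj_1(\bar z)$ and $\proj_2(\bar z)$; the only genuinely new ingredient beyond the proof of Theorem~\ref{decomposability} is the elementary observation that gluing preserves the separable constraints.
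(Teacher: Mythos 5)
The paper does not prove this statement: it is quoted verbatim from the reference \cite{dPKha25}, and the only in-paper analogue is the likewise-unproved unconstrained version (Theorem~\ref{decomposability}). So there is no internal proof to compare against; what I can say is that your argument is correct and self-contained. The forward inclusion $\conv(C(\G))\subseteq R$ is routine, and your reverse inclusion is the standard product-measure gluing argument that underlies decomposition theorems for multilinear sets: the decisive facts are that $\MP(\G_1\cap\G_2)$ is a simplex (the $2^{|V_0|}$ lifted binary patterns are affinely independent via the invertibility of the zeta matrix of the Boolean lattice, so the marginal on the common part has a unique vertex representation, forcing the weight-matching $\nu_\pi$), and that two binary points agreeing on $V_0$ glue to a point of $S(\G)$ whose projections recover them because $\E=\E_1\cup\E_2$. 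The genuinely new ingredient relative to the unconstrained case is exactly the one you isolate: since each side constraint involves only $\G_1$-variables or only $\G_2$-variables, membership in $C(\G)$ is decided by the two projections separately, and the $s_j,t_k$ satisfy their respective constraints simply because $C(\G_i)$ is \emph{defined} as a projection of $C(\G)$ (you correctly avoid claiming it equals the full set of constraint-satisfying points of $S(\G_i)$, which need not hold). The degenerate cases $\nu_\pi=0$ and $|V_1\cap V_2|\le 1$ are harmless as you implicitly handle them. I see no gap.
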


Thanks to Theorem~\ref{cor constrained decomposition} and Proposition~\ref{cnvr}, we can employ a similar line of arguments as in the proof of Proposition~\ref{prop:charac} to obtain a sufficient condition for sharpness of the clique LP for decoding:

\begin{prop}\label{prop:characE}
     Denote by $\C$ the set of maximal cliques of a binary UGM. Let $\G = \cup_{C \in \C}{\G_C}$,  where $\G_C$ is a complete hypergraph with node set $C$. If $\C$ has the running intersection property, then
     $$\MP^e(\G) = \bigcap_{C \in \C}(\MP(\G_C) \cap \H_C).$$
\end{prop}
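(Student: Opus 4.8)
The plan is to follow the structure of the proof of Proposition~\ref{prop:charac}, invoking Theorem~\ref{cor constrained decomposition} in place of Theorem~\ref{decomposability} and using Proposition~\ref{cnvr} to identify the single-clique building block as $\MP(\G_C)\cap\H_C$. Let $C_1,\dots,C_m$ be a running intersection ordering of $\C$, and for $i\in[m]$ let $\G_{C_i}$ be the complete hypergraph on $C_i$; this is precisely the section hypergraph of $\G$ induced by $C_i$, and $\cup_{i<j}\G_{C_i}$ is the section hypergraph induced by $\cup_{i<j}C_i$ (any edge of size at least two contained in $\cup_{i<j}C_i$ lies, by the running intersection property, inside one of $C_1,\dots,C_{j-1}$). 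For the split $\G=\G_{C_m}\cup(\cup_{i<m}\G_{C_i})$, the running intersection property gives $C_m\cap\bigcup_{i<m}C_i\subseteq C_j$ for some $j<m$, so the intersection of the two section hypergraphs is the complete hypergraph on $C_m\cap\bigcup_{i<m}C_i$. The equation cutting out $\H_{C_m}$ uses only variables indexed by subsets of $C_m$, and those cutting out $\H_{C_1},\dots,\H_{C_{m-1}}$ use only variables indexed by subsets of $C_1,\dots,C_{m-1}$, so each constraint lies entirely within one of the two section hypergraphs. Hence Theorem~\ref{cor constrained decomposition} applies, and $S^e(\G)$ decomposes into the projections $C(\G_{C_m})$ and $C(\cup_{i<m}\G_{C_i})$. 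Applying this $m-1$ times (every prefix $C_1,\dots,C_j$ is again a running intersection ordering) shows that $\MP^e(\G)$ is described by the union of the descriptions of the polytopes $\conv(C(\G_{C_i}))$, $i\in[m]$, where $C(\G_{C_i})$ denotes the projection of $S^e(\G)$ onto the coordinates indexed by subsets of $C_i$.

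It then remains to show $\conv(C(\G_{C_i}))=\MP(\G_{C_i})\cap\H_{C_i}$ for each $i$. One inclusion is immediate: restricting a point of $S^e(\G)$ to the coordinates indexed by subsets of $C_i$ yields a point of $S(\G_{C_i})$ that satisfies the equation of $\H_{C_i}$, so $C(\G_{C_i})\subseteq S^e(\G_{C_i})$ and therefore $\conv(C(\G_{C_i}))\subseteq\conv(S^e(\G_{C_i}))=\MP(\G_{C_i})\cap\H_{C_i}$ by Proposition~\ref{cnvr}. For the reverse inclusion I need $C(\G_{C_i})=S^e(\G_{C_i})$, that is, every binary point of $S^e(\G_{C_i})$ extends to a point of $S^e(\G)$; this is the heart of the argument. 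Since $\C$ has the running intersection property, it admits a running intersection ordering beginning with $C_i$, so relabel it as $C_1=C_i$. Given $w\in S^e(\G_{C_i})$ --- a binary labeling of the nodes of $C_1$ meeting the parity requirement of $C_1$, together with the induced product values --- I would extend greedily: for $j=2,\dots,m$ the set $C_j\cap\bigcup_{k<j}C_k$ is contained in an earlier clique, so its labels are already fixed, and since $\C$ consists of \emph{distinct maximal cliques} this set is a proper subset of $C_j$, so $C_j$ has at least one node appearing in no earlier clique; I label these fresh nodes so that the parity requirement of $C_j$ holds. Defining $z_e=\prod_{v\in e}z_v$ for all $e\in\E$ then yields a point of $S^e(\G)$ restricting to $w$, so $C(\G_{C_i})=S^e(\G_{C_i})$ and thus $\conv(C(\G_{C_i}))=\MP(\G_{C_i})\cap\H_{C_i}$.

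Putting the pieces together, $\MP^e(\G)$ is described by the union over $i\in[m]$ of the descriptions of $\MP(\G_{C_i})\cap\H_{C_i}$, and since $\{C_1,\dots,C_m\}=\C$ this is exactly $\MP^e(\G)=\bigcap_{C\in\C}(\MP(\G_C)\cap\H_C)$. The one genuinely new point compared with Proposition~\ref{prop:charac} --- and the step I expect to require the most care --- is the extension in the second paragraph: in the unconstrained setting the projection of $S(\G)$ onto a section hypergraph trivially equals the whole multilinear set (extend by zeros), whereas here the extension must respect all the parity equations, and it is precisely maximality and distinctness of the cliques that guarantee enough ``fresh'' variables to absorb each clique's parity constraint. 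A minor supporting claim to spell out is that a family with the running intersection property can be reordered to begin with any prescribed member.
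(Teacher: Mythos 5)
Your proposal is correct and follows essentially the same route the paper intends: a running intersection ordering of $\C$, recursive application of Theorem~\ref{cor constrained decomposition} in place of Theorem~\ref{decomposability}, and Proposition~\ref{cnvr} to identify each single-clique block with $\MP(\G_C)\cap\H_C$. In fact you are more careful than the paper's one-line sketch, since you explicitly verify that the projection of $S^e(\G)$ onto each clique equals $S^e(\G_C)$ (via the greedy parity-respecting extension), a step Theorem~\ref{cor constrained decomposition} genuinely requires because it is stated in terms of projections.
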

If $\C=\{C_1, C_2\}$, then $\C$ clearly has the running intersection property, implying that the clique relaxation coincides with the convex hull. However, as we show next, even in this simple case, the parity relaxation $\P(\C)$ does not coincide with the convex hull.
\begin{example}
    Let $\C=\{C_1, C_2\}$  with $C_1=\{v_1, v_2, v_3, v_4\}$
    and $C_2 = \{v_3,v_4,v_5,v_6\}$. It can be checked that the point $\tilde z_{v_1} = \tilde z_{v_2}= \tilde z_{v_5}= 0$,
    $\tilde z_{v_3}= \tilde z_{v_4}=\frac{1}{2}$ and $\tilde z_{v_6} = 1$ is feasible for  $\P(\C)$. Since the points in $C_1$ and $C_2$ should have even parity, we conclude that the points in $(C_1 \setminus C_2) \cup (C_2 \setminus C_1)$ should have even parity as well; that is, inequality $(1-z_{v_6})+z_{v_1}+z_{v_2}+z_{v_5} \geq 1$ is valid for $\D(\C)$. Substituting $\tilde z$ in this inequality yields $1-1+0+0+0 \not\geq 1$, implying that $\D(\C)$ is strictly contained in $\P(\C)$.
\end{example}

\subsection{Numerical Experiments}
In this section, we compare the performance of different LP relaxations for decoding LDPC codes. We first describe how LDPC codes are generated~\cite{gallager1962low}: an $(n, \beta, \gamma)$ LDPC code is often characterized by a \emph{parity-check matrix}, an $m \times n$ binary matrix with $m:= \frac{n}{\gamma}\times \beta$, where each row contains $\beta$ ones and each column contains $\gamma$ ones. To construct a parity check matrix, we start by creating a matrix with all ones arranged in descending order; the $i$th row contains ones in columns $(i-1)\beta + 1$ to $i \beta$. We then permute the columns of this matrix randomly and append it to the initial matrix. This permutation and appending is repeated $\gamma-1$ times to ensure each column contains $\gamma$ ones. The ones in each row of the matrix then correspond to the nodes of a clique in the UGM. It then follows that the UGM consists of $\frac{n}{\gamma}\times \beta$ cliques, each consisting of $\beta$ nodes.
We assume an all-zero code as the ground truth code.
As the first set of experiments, we consider a $(60,4,3)$ LDPC code. We use the bit-flipping noise with $p \in [0:0.01:0.2]$
and for each $p$ we generate $400$ random trials. We then compare the performance of different LPs with respect to tightness rate and partial recovery rate as defined before. For the multi-clique LP we set $m=4$; \ie in our tests lifted odd-cycle inequalities~\eqref{liftedoddCycle} for cycles of cliques of length three and four are generated.
Results are shown in Figure~\ref{fig:43LDPC}. As can be seen from this figure, the standard LP and the flower LP perform quite poorly, while the running LP, the clique LP, the multi-clique LP, and the parity LP perform well. The multi-clique LP is the best, followed by the clique LP, followed by the running LP, followed by the parity LP. As before the computational cost of solving the running LP is significantly higher than other LPs. Motivated by these observations, in the next set of experiments we restrict our study to the parity LP, the clique LP, and the multi-clique LP.

\begin{figure}[htbp]
\centering
\begin{subfigure}[b]{0.48\textwidth}
\centering
    \includegraphics[width=\linewidth]{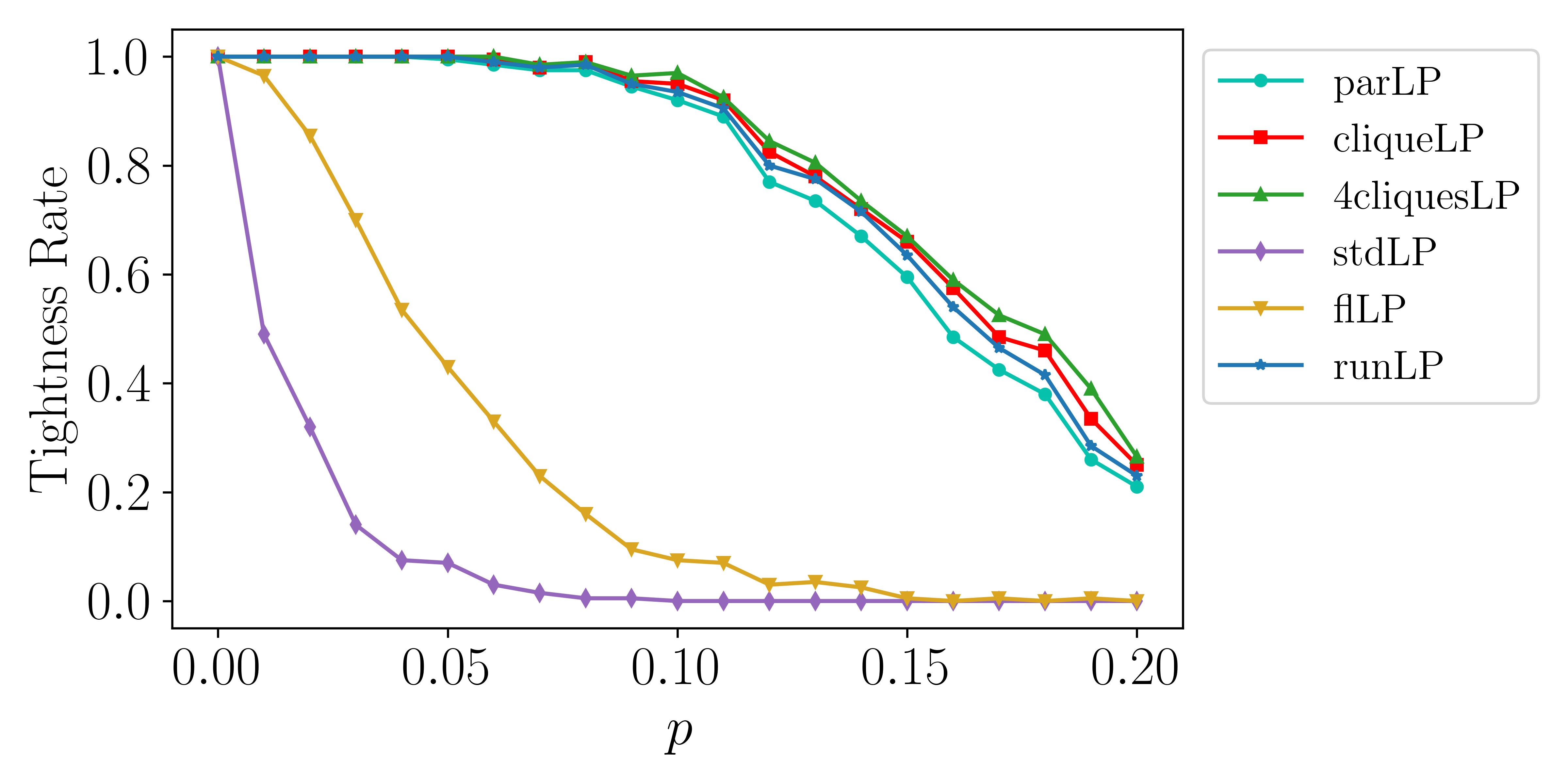}
 %   \caption{Tightness Rate}
\end{subfigure}
\hspace{2mm}
\begin{subfigure}[b]{0.48\textwidth}
\centering
\includegraphics[width = \linewidth]{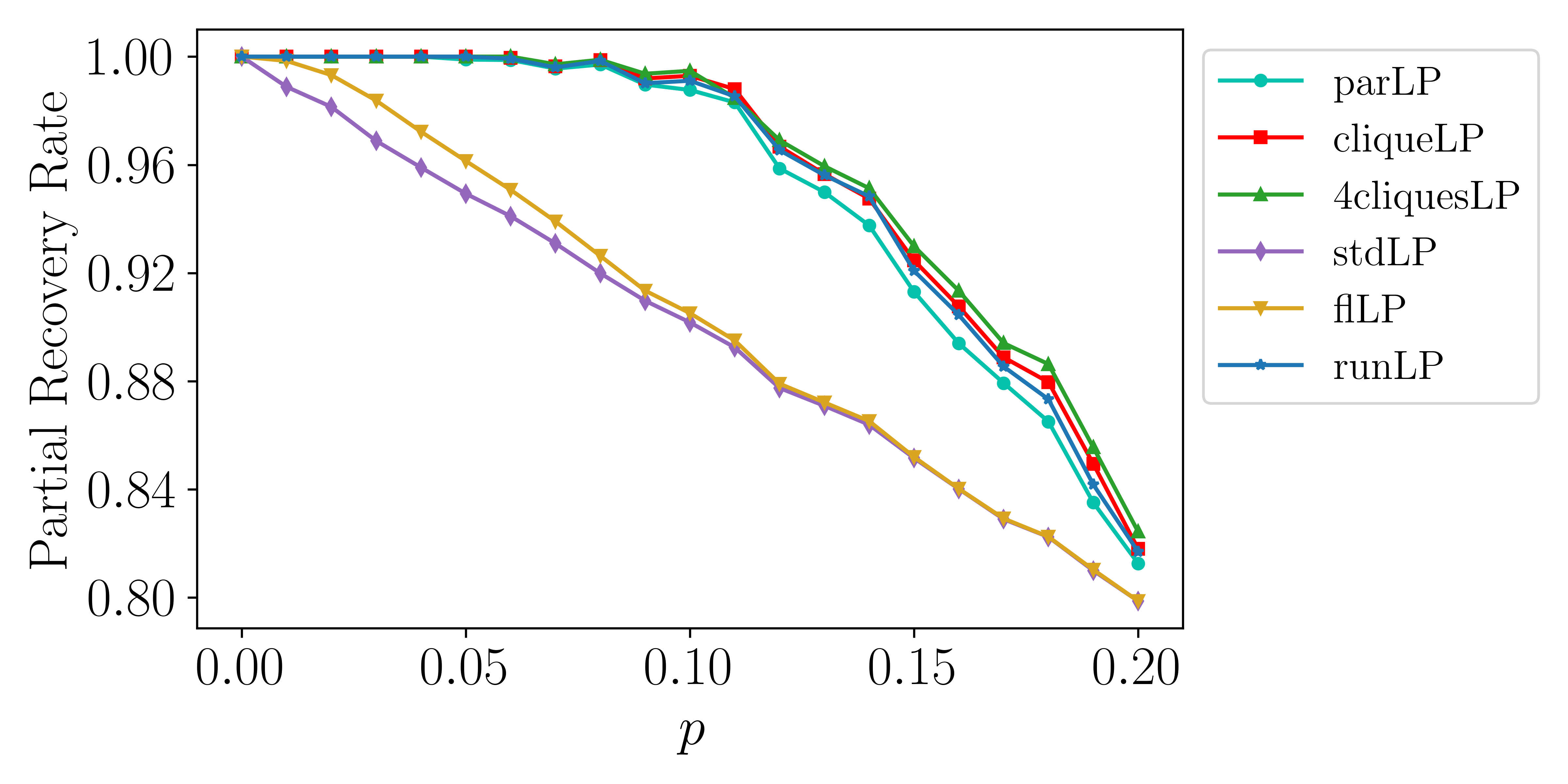}
  %  \caption{Partial recovery rate}
\end{subfigure}
\caption{Performance of different LPs for decoding $(60,4,3)$ LDPC codes.}
\label{fig:43LDPC}
\end{figure}

We next consider three type of LDPC codes: $(120,4,3)$,
$(120,5,4)$, and $(120,6,5)$. We set $p \in [0:0.01:0.2]$ and for each $p$ we generate $400$ random trials. Results are depicted in Figure~\ref{fig:longLDPCs}. Overall, the multi-clique LP and the clique LP have better tightness rates than the parity LP, as the theory suggests. However, the differences, especially in terms of partial recovery rates  become smaller as we increase the code length. This indeed, indicates the difficulty of solving this problem class; namely, by constructing stronger LP relaxations, the partial recovery rate of the decoder only marginally improves. These results also suggest that for a fixed code length, as we increase the clique size, the performance of all LPs degrade; while for a $(120,4,3)$ LDPC code all LPs manage to recover the ground truth with up to about $10\%$ corruption, for a $(120,6,5)$ LDPC code, this number decreases to about $5\%$ corruption.

\begin{figure}[htbp]
\centering
\begin{subfigure}[t]{\textwidth}
\centering
\includegraphics[width=0.48\linewidth]{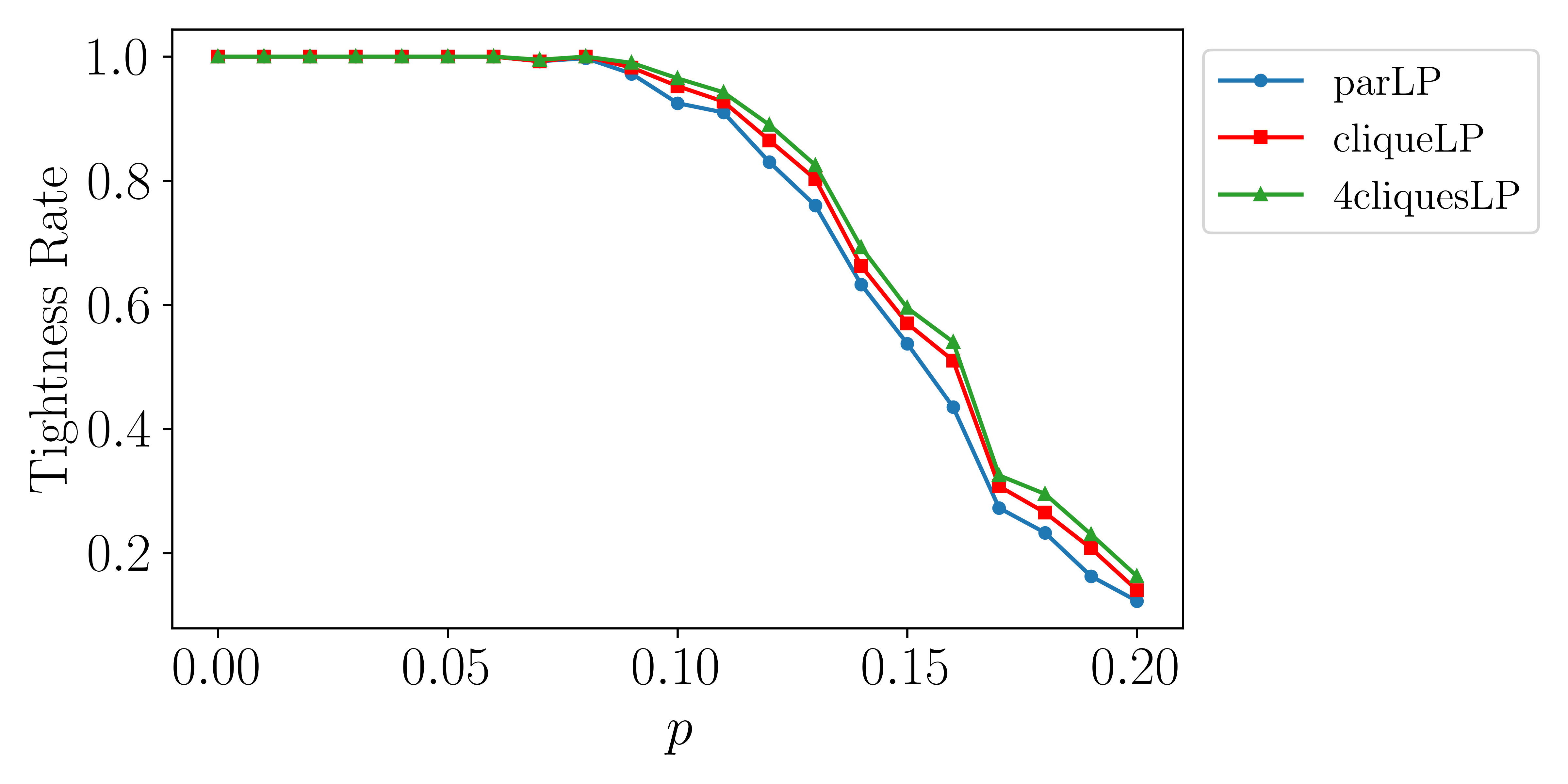}
\includegraphics[width=0.48\linewidth]{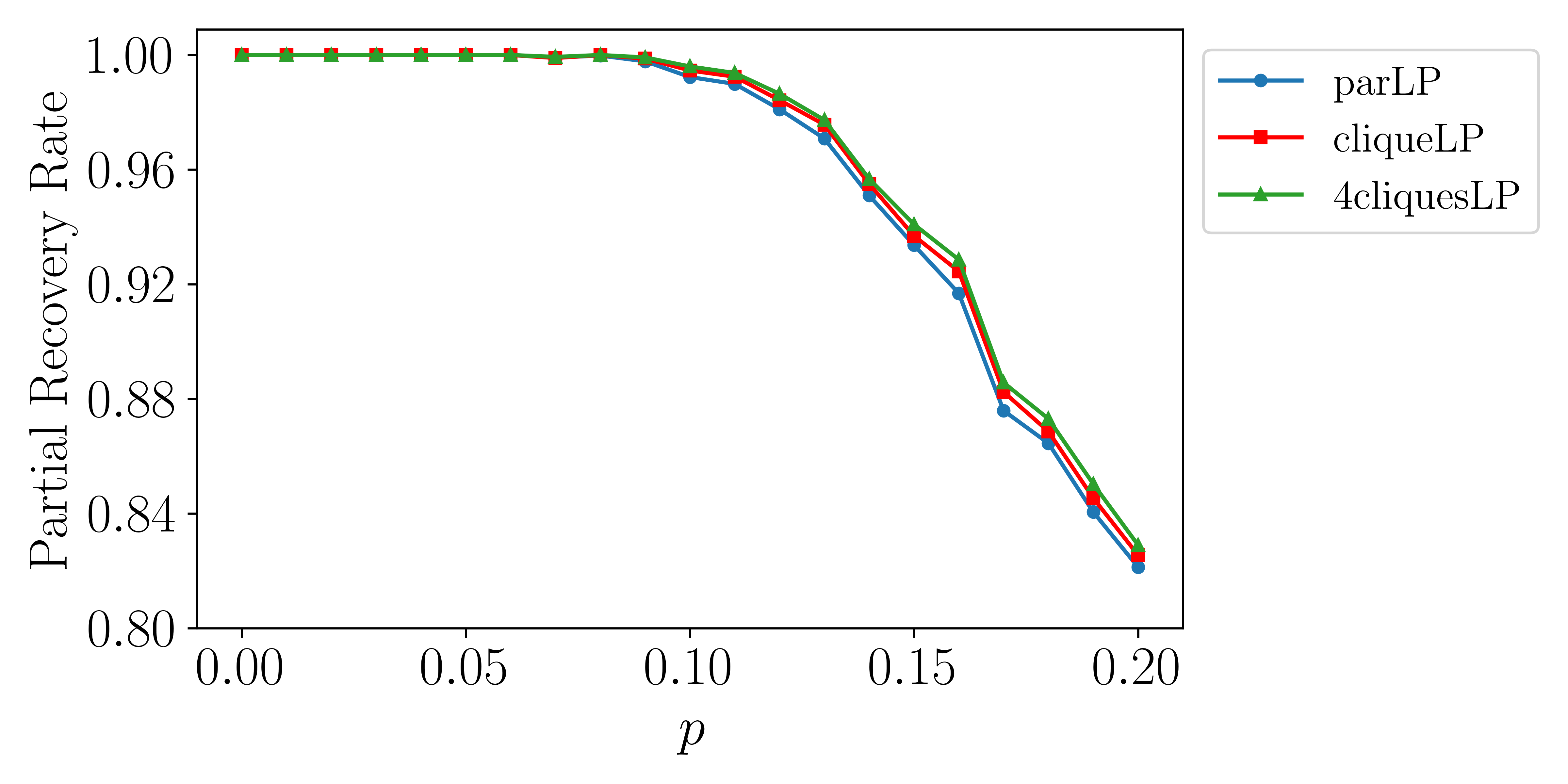}
\caption{$(120,4,3)$ LDPC codes}
\end{subfigure}
\vspace{10pt}
\begin{subfigure}[t]{\textwidth}
\centering
\includegraphics[width=0.48\linewidth]{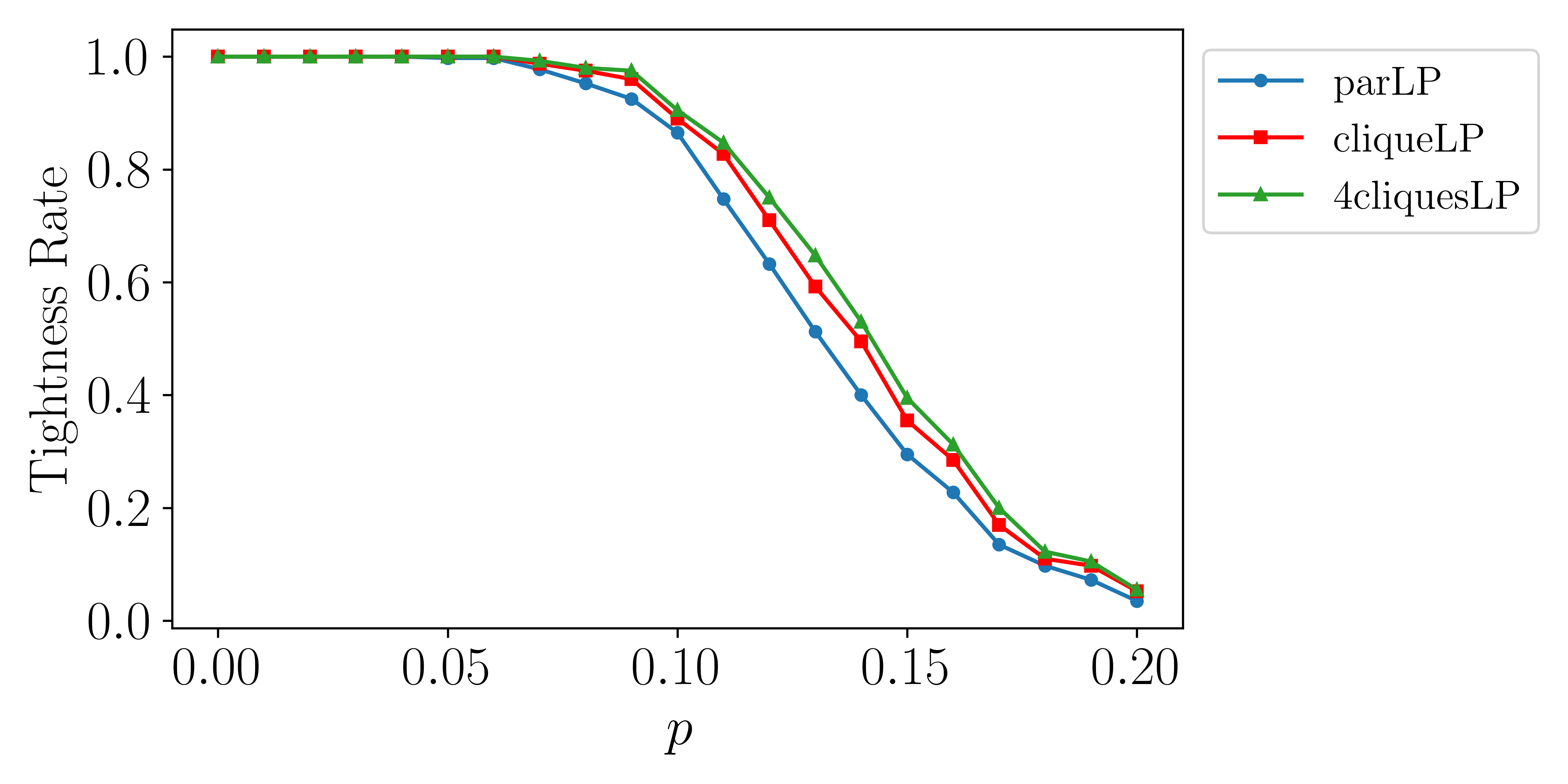}
\includegraphics[width=0.48\linewidth]{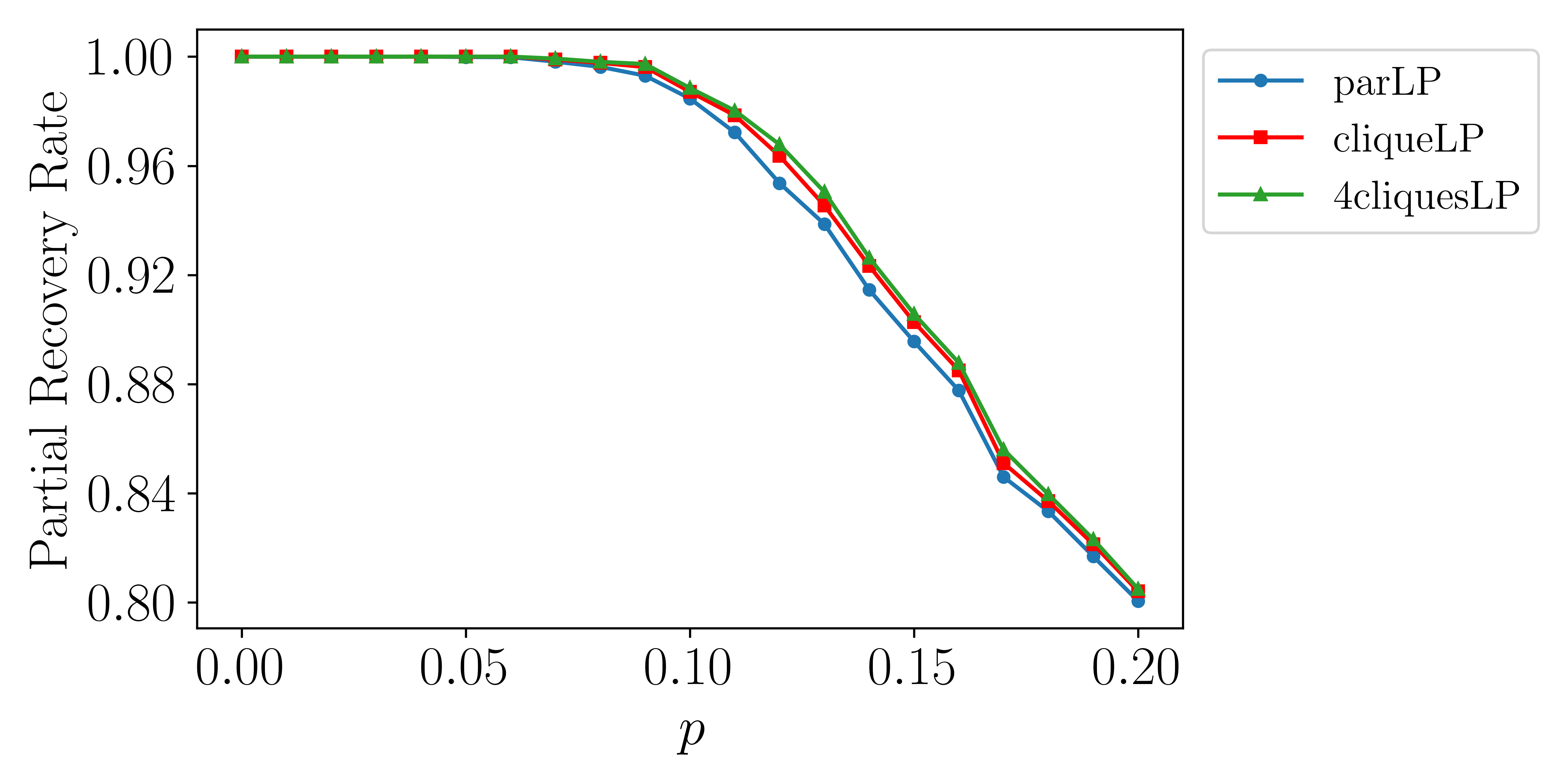}
\caption{$(120,5,4)$ LDPC codes}
\end{subfigure}
\vspace{5pt}
\begin{subfigure}[t]{\textwidth}
\centering
\includegraphics[width=0.48\linewidth]{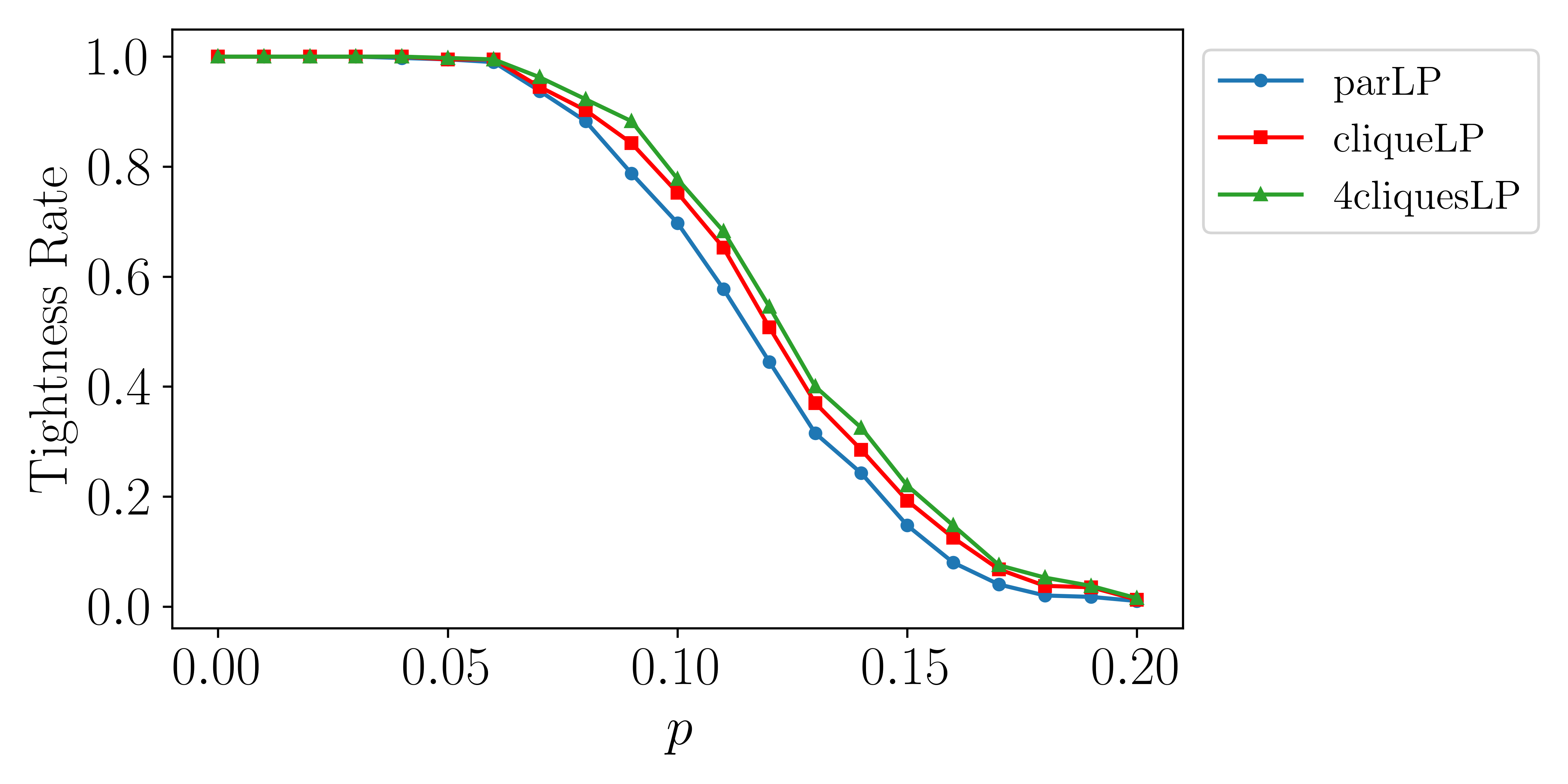}
\includegraphics[width=0.48\linewidth]{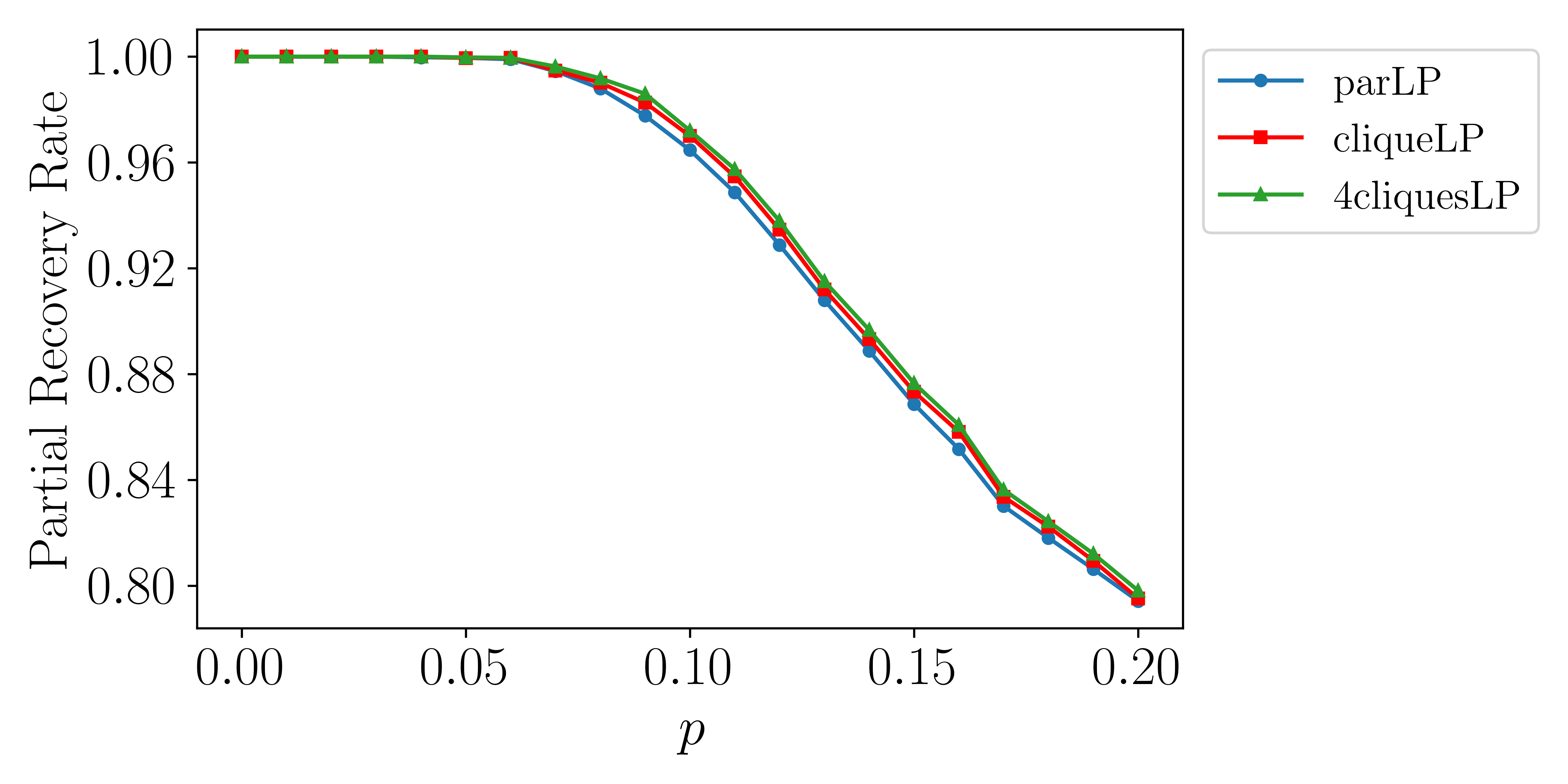}
\caption{$(120,6,5)$ LDPC codes}
\end{subfigure}
\caption{Performance of LP relaxations for decoding
longer LDPC codes.}
\label{fig:longLDPCs}
\end{figure}

\begin{footnotesize}
\bibliographystyle{plain}
\bibliography{biblio,ref}
\end{footnotesize}

\end{document}